\documentclass[11pt, reqno, a4paper, twoside]{article}
\usepackage[top=2cm,bottom=1.8cm,left=1.3cm,right=1.3cm]{geometry}
\usepackage[colorlinks=true,breaklinks=true,linkcolor=lightblue,citecolor=lightblue,urlcolor=lightblue]{hyperref}
\usepackage{newtxtext}
\usepackage[final]{pdfpages}

\usepackage{amsmath,amssymb,amsfonts,mathtools,fancyhdr}
\usepackage{tabularx}
\usepackage{multirow}        
\usepackage{diagbox}
\usepackage{tikz}
\usepackage{algorithm}
\usepackage{algpseudocode}
\usepackage{graphicx,epsfig,scalerel,subfigure,booktabs}
\usepackage[super]{nth}
\usepackage{xparse}
\usepackage{xurl}
\usepackage{hyperref}
\usepackage{siunitx}
\usepackage{afterpage}
\usepackage{enumitem}
\usepackage{comment}
\usepackage{stmaryrd}
\usepackage[normalem]{ulem}
\usepackage{xcolor}

\definecolor{lightgreen}{rgb}{0.22,0.50,0.25}
\definecolor{lightblue}{rgb}{0.22,0.45,0.70}

\newcommand*\tnorm[1]{|\!|\!| #1 |\!|\!|}
\newcommand{\bn}{\boldsymbol{n}}

\newcommand{\bu}{\boldsymbol{u}}
\newcommand{\bv}{\boldsymbol{v}}
\newcommand{\bw}{\boldsymbol{w}}

\renewcommand{\bf}{\boldsymbol{f}}
\newcommand{\bg}{\boldsymbol{g}}

\newcommand{\bx}{\boldsymbol{x}}

\newcommand\bsigma{\boldsymbol{\sigma}}
\newcommand\bzeta{\boldsymbol{\zeta}}
\newcommand\bzero{\boldsymbol{0}}

\newcommand\bxi{\boldsymbol{\xi}}
\newcommand\btau{\boldsymbol{\tau}}

\newcommand{\rF}{\mathtt{F}}

\newcommand{\rH}{\mathrm{H}}
\newcommand{\rL}{\mathrm{L}}
\newcommand{\rW}{\mathrm{W}}
\newcommand{\rQ}{\mathrm{Q}}

\newcommand{\bB}{\mathbf{B}}
\newcommand{\bH}{\mathbf{H}}
\newcommand{\bX}{\mathbf{X}}

\newcommand{\bL}{\mathbf{L}}
\newcommand{\bW}{\mathbf{W}}
\newcommand{\bV}{\mathbf{V}}
\newcommand{\bQ}{\mathbf{Q}}
\newcommand{\bZ}{\mathbf{Z}}

\newcommand{\bbA}{\mathbb{A}}
\newcommand{\bbD}{\mathbb{D}}
\newcommand{\bbB}{\mathbb{B}}
\newcommand{\bbC}{\mathbb{C}}

\newcommand{\bbJ}{\mathbb{J}}

\newcommand{\cA}{\mathcal{A}}
\newcommand{\cB}{\mathcal{B}}

\newcommand{\cE}{\mathcal{E}}
\newcommand{\cL}{\mathcal{L}}
\newcommand{\cJ}{\mathcal{J}}
\newcommand{\cI}{\mathcal{I}}
\newcommand{\cT}{\mathcal{T}}
\newcommand{\cS}{\mathcal{S}}
\newcommand{\cR}{\mathcal{R}}

\newcommand\vdiv{\mathop{\mathrm{div}}\nolimits}

\newtheorem{remark}{Remark}[section]
\newtheorem{theorem}{Theorem}[section]
\newtheorem{lemma}[theorem]{Lemma}

\newenvironment{proof}{\noindent{\it Proof.}}{\hfill$\square$}
\allowdisplaybreaks
\numberwithin{equation}{section}
\numberwithin{remark}{section}
\numberwithin{figure}{section}
\numberwithin{table}{section}

\definecolor{darkteal}{RGB}{100, 0, 120}

\newcommand{\cgray}[1]{{\leavevmode\color{gray}{#1}}}
 \newcommand{\cblue}[1]{{\leavevmode\color{blue}{#1}}}
\newcommand{\rrb}[1]{{\leavevmode\color{lightgreen}{#1}}}

\begin{document}

\pagestyle{fancy}
\rhead{{\small Das, Hutridurga, Pani, Ruiz-Baier}}
\lhead{\textit{\small Robust methods for Darcy--Forchheimer/transport equations}}

\title{Parameter-robust well-posedness and discretisation for coupled Darcy--Forchheimer and advection-diffusion-reaction equations\thanks{\textbf{Updated:} \today. \textbf{Funding:} This work has been partially supported by  the Australian Research Council through the \textit{Future Fellowship} grant FT220100496, and by the Center of Advanced Study (CAS) at the Norwegian Academy of Science and Letters under the program \textit{Mathematical Challenges in Brain Mechanics}. RD acknowledges support from the Industrial Research and Consultancy Centre (IRCC), IIT Bombay (grant no. RI/0519-10001788-001). HH acknowledges financial support from the Anusandhan National Research Foundation (ANRF), Government of India (grant no. ANRF/ARG/2025/011989/MS).}}

\author{Rishi Das\thanks{
IITB–Monash Research Academy, Indian Institute of Technology Bombay, Powai, Mumbai, Maharashtra 400076, India. Email: rishi.das@monash.edu.} 
\and  Harsha Hutridurga\thanks{Department of Mathematics, Indian Institute of Technology Bombay, Powai, Mumbai, Maharashtra 400076, India. Email: hutridurga@iitb.ac.in.}
\and Amiya K. Pani\thanks{Department of Mathematics, BITS-Pilani, K.K. Birla Goa Campus, Zuarinagar, Goa-403726, India. Email: amiyap@goa.bits-pilani.ac.in.}
\and Ricardo Ruiz-Baier\thanks{School of Mathematics, Monash University, 9 Rainforest Walk, Melbourne VIC 3800, Australia. Email: ricardo.ruizbaier@monash.edu.}}
\date{\today}

\maketitle

\begin{abstract}
\noindent 
We adapt the recent theory for unique solvability of perturbed saddle-point problems in Banach spaces to the case of parameter-independent stability bounds. This also constitutes an extension to the Brezzi--Braess theory for parameter-robust stability of perturbed saddle-point problems from the Hilbert to the Banach setting. We apply the new abstract result to the mixed formulation of the advection-diffusion-reaction equation and tackle also its coupling with the Darcy--Forchheimer equations. The complete system is shown to be stable irrespective of the model parameters of permeability, Forchheimer coefficient,   and reaction modulation. We discretise the problem with mixed finite element methods and show convergence in appropriately parameter-weighted norms. We also design operator-based preconditioners for the full system, utilising weighted norms that provide robustness with respect to {some model parameters}. 
We also present a few numerical examples that show the {convergence and robustness of the proposed method} in some regimes of practical interest.

\medskip
\noindent\textbf{MSC (2010):} 65N30, 65N12, 65N15.

\medskip 
\noindent\textbf{Keywords:} Nonlinear flow in porous media, robust well-posedness in Banach spaces, mixed finite element methods, robust stability, fixed-point analysis, preconditioners, numerical experiments.
\end{abstract}

\section{Introduction} 
This paper investigates the robust parameter stability of a system of the Darcy--Forchheimer equations coupled with the advection-diffusion-reaction (ADR) equation as presented in \cite{sayah2021finite}.  
  Let $\Omega\subset \mathbb{R}^d$, $d\in \{2,3\}$, denote a  {bounded} domain with Lipschitz-continuous boundary $\partial\Omega$, on which its outward unit normal  is
denoted by $\bn$.  The boundary is decomposed into  two sub-boundaries $\Gamma_{\bu}$ and $\Gamma_p$, both with positive $(d-1)-$Hausdorff measure. The Darcy--Forchheimer equations are written in their discharge velocity and pressure head formulation $\bu,p$ featuring a nonlinear drag term, and the ADR transport using a conservative form in terms of total (diffusive plus advective) flux  $\bzeta$ and solute concentration  $\varphi$: 
\begin{subequations}\label{eq:strong}
\begin{align}
    \kappa^{-1}\bu + {\rF|\bu|\bu} + \nabla p & = \bf(\varphi) \quad \text{in $\Omega$},\\
    \vdiv \bu & = 0 \quad \text{in $\Omega$},\\
    \alpha^{-1}\bzeta & = \nabla \varphi - { \alpha^{-1}} \bu \varphi \quad \text{in $\Omega$},\\
    \varrho_0 \varphi - \vdiv \bzeta & = m \quad \text{in $\Omega$},\\
    \bu\cdot \bn = \varphi & = 0 \quad \text{on $\Gamma_{\bu}$},\\
    p  = \bzeta \cdot \bn  & = 0 \quad \text{on $\Gamma_p$}.
\end{align}\end{subequations}
Here, $\kappa$ is the hydraulic conductivity (permeability of the porous matrix divided by the fluid viscosity),
$\bf$ is a forcing vector that depends on the concentration field which satisfies the following condition (cf. \cite[Assumption~2.1]{sayah2021finite}) 
\begin{align}\label{eq:lip.bound.f}\|\bf(\varphi)\|_{0,\Omega}\leq \|\bf(0)\|_{0,\Omega}+ \alpha_1\|\varphi\|_{0,6,\Omega},\end{align} 
where  \(\alpha_1\) is a positive constant.
 In addition,  $\rF>0$ is the Forchheimer coefficient, $\alpha>0$ is the diffusivity coefficient, $\varrho_0>0$ is the reaction coefficient  and $m$ is the concentration source/sink. A variety of discretisations with finite differences and finite element methods can be found in, e.g. \cite{deugoue2022numerical,huang2025numerical,bonetti2025conforming}, or in \cite{allendes2023numerical} for the case of singular sources (see also \cite{caucao2021fully,gatica2022p} for analysis of similar models in the fully mixed case). Such problems find applications in reservoir modeling and geothermal energy optimisation, for example.

\smallskip
\noindent\textbf{Main contribution.}  
Owing to the form of \eqref{eq:strong}, and  with the aim of preserving both robustness of the parameters--both at the continuous and discrete levels--the variational formulation of the problem is cast involving sums and intersections of reflexive Banach spaces. The main tool in the theoretical analysis of well-posedness (as well as error estimates) is a new abstract result for perturbed saddle-point problems in reflexive Banach spaces that stands an extension of \cite{correa2022continuous} (to the parameter-robust context) as well as of \cite{boon2021robust} (to its non-Hilbert counterpart). 
 A more closely related reference that deals with the design of preconditioning for flow-transport couplings, is the recent work \cite{cai2026parameter} addressing robust stability of a linearised model for coupled heat and Biot equations. The main differences here are that, due to the presence of the nonlinear terms and Banach spaces functional structure, we need to take into consideration sums of three non-Hilbert spaces and realise the action of duality maps that are non-standard when compared to the recent literature on operator preconditioning (see, e.g., \cite{baerland2020observation,cai2026parameter,hong2022robust}). It should be noted that, in the context of ADR problems, the presence of the advective term renders the underlying saddle-point structure non-symmetric. The treatment of this term within the design of preconditioners remains challenging for both preconditioning strategies and iterative solvers. 
 
{The preconditioners  we explore here are  inspired by Schur complement and operator preconditioning frameworks for symmetric elliptic operators; however, their extension to the nonsymmetric and nonnormal setting lacks a firm theoretical basis  \cite{wathen2015preconditioning,ernst2000residual}. Additionally, while the individual Darcy--Forchheimer and reaction-diffusion subproblems are uniformly bounded and inf-sup stable, robustness with respect to $\alpha$ and $\rF$ is partially lost in the coupled system due to the fixed-point coupling of the nonlinear velocity and transport flux. This analytical loss of robustness does not manifest in our observed numerical convergence rates, suggesting it may be an artifact of sub-optimal scaling in the weighted norms or limitations inherent to the fixed-point framework.}

\smallskip
\noindent\textbf{Common notation and preliminaries.} For $s\geq 0$ and $r\in[1,+\infty]$, we denote by $\rL^r(\Omega)$ and $\rW^{s,r}(\Omega)$ the usual Lebesgue and Sobolev spaces endowed with the norms $\|\bullet\|_{0,r,\Omega}$ and $\|\bullet\|_{s,r,\Omega}$, respectively.
Note that $\rW^{0,r}(\Omega)=\rL^r(\Omega)$. 
If $r = 2$, we write $\rH^{s}(\Omega)$ in place of $\rW^{s,2}(\Omega)$, and denote the corresponding norm by $\|\bullet\|_{s,\Omega}$. {Recall also the \(\rL^r(\Omega)\)-\(\rL^s(\Omega)\) embedding for finite measure domains, i.e.,   \(\|\bullet\|_{0,r,\Omega}\leq|\Omega|^{\frac1r-\frac1s}\|\bullet\|_{0,s,\Omega},\) for \(1<r\leq s<\infty.\)} Also, $C_{\mathsf{emb}}$ denotes the constants arising from a Sobolev embedding, which depends only on $|\Omega|.$ We 
denote by $\mathbf{H}$ the vectorial counterpart of a generic scalar functional space $\rH$. The $\rL^2(\Omega)$ inner product for scalar, vector, or tensor valued functions
is denoted by $(\bullet,\bullet)_{0,\Omega}$. 
Moreover, we  use for $r,s \in [1,+\infty)$, the Banach space 
$\bH^r(\vdiv_s,\Omega)=\{ \bv \in \bL^r(\Omega): \vdiv \bv \in \rL^s(\Omega)\},$ endowed with the norm $\|\bv\|_{r,\vdiv_s,\Omega}^2 = \|\bv\|^r_{0,r,\Omega} + \|\vdiv \bv\|^s_{0,s,\Omega}$, {and in particular, for $r=2, s=2,$ we call it $\bH(\vdiv,\Omega)$.}
Often, we  use a subscript $\Gamma_i$ on the functional space to denote that the (appropriate) traces vanish on the part of the boundary $\Gamma_i\subset \partial \Omega$. 

For two Banach spaces $(X,\|\bullet\|_X)$ and $(Y,\|\bullet\|_Y),$ we denote by $\cL(X,Y)$ the space of bounded linear maps from $X$ to $Y$. Let $(Z,\|\bullet\|_Z)$ be another Banach space. The intersection and sum spaces are 
\begin{gather*} X \cap Y \cap Z = \{ v : v\in X \ \text{and} \ v\in Y \ \text{and}\ v\in Z\}, \\ X+ Y + Z = \{u+v+w: u\in X \ \text{and}\ v\in Y\ \text{and} \ w \in Z\},\end{gather*}
and their respective norms are (see \cite{baerland2020observation}, noting that the same norm characterisation done there for Hilbert spaces holds also for the Banach case)
\[ \| r\|_{X\cap Y \cap Z} = \|r\|_X + \|r\|_Y + \|r\|_Z, \quad \| z\|_{X + Y + Z} = \inf_{\substack{r = u+v+w, \\ u \in X, v\in Y, w\in Z}}\,\bigl[\|u\|_X + \|v\|_Y+ \|w\|_Z\bigr].\]
Furthermore, if $X \cap Y \cap Z$ is dense in $X$ and $Y$ and $Z$, then the following relation (understood as an identification under an isometry) holds 
\begin{equation}\label{eq:duality} (X+Y+Z)' = X'\cap Y' \cap Z'.\end{equation}
For a fixed $\alpha>0$, by $\alpha X$ we denote the Banach space whose elements coincide with those in $X$ but are measured with the norm 
\(\| \bullet \|^2_{\alpha X}= \alpha^2\|\bullet \|^2_X. \)
{For any scalar function \(\varphi,\) we recall that the sign function (denoted by \(\mathrm{sgn}(\varphi)\)) takes the value \(1,\) for non-negative \(\varphi,\) and \(-1\), otherwise. 
Additionally, \(\varphi\, \mathrm{sgn}(\varphi) = |\varphi|.\) 

\smallskip
\noindent\textbf{Outline.} The remainder of the paper is organised as follows. Section~\ref{sec:abstract} presents the weak formulation for \eqref{eq:strong}, specifying the required parameter-weighted spaces. Also we state and prove an abstract theory for robust stability of perturbed saddle-point problems in Banach spaces. In Section~\ref{sec:existence}, we formulate our problem in the context of abstract theory and show, using fixed-point arguments, that there exists a unique solution of the coupled system and that we have a parameter-independent continuous dependence on data. Section~\ref{sec:fe} deals with the finite element formulation, the robust discrete stability--via a discrete fixed-point argument--and the derivation of robust a priori error estimates. We construct variable block-diagonal operator preconditioners in Section \ref{sec:precond} and show their robustness with respect to all model parameters and the discretisation parameter,  except the diffusivity coefficient. Finally, Section~\ref{sec:numer} includes a series of numerical tests showing the  convergence of the method in 2D, also demonstrating the performance of the  preconditioners.

\section{Mixed weak formulation and abstract setting}\label{sec:abstract}
We now consider the weighted spaces proposed in \cite[Section~3]{dhpr_sinum26} for Darcy velocity and pressure: 
\[\bV:=  \kappa^{-\frac{1}{2}}\bL^2(\Omega) \cap \bH_{\Gamma_{\bu}}(\vdiv,\Omega) \cap \rF^{\frac13} \bL^3(\Omega), \quad\text{and}\quad \rQ := \rL^2(\Omega) + \kappa^{\frac{1}{2}}\rH^1_{\Gamma_p}(\Omega) + \rF^{-\frac13}\rW^{1,\frac32}_{\Gamma_p}(\Omega),\]
endowed with the following norms 
\begin{subequations}\label{eq:norms}
    \begin{align}
   \|\bv\|_{\bV} &:= \kappa^{-\frac12}\|\bv\|_{0,\Omega} + \|\vdiv\bv\|_{0,\Omega} + \rF^{\frac13}\|\bv\|_{0,3,\Omega}, \quad\text{and}\\
   {\|q\|_{\rQ}}&:=\inf_{\substack{s\in\rH^1_{\Gamma_p}}}\bigl(\|q-s\|_{0,\Omega}+\kappa^{\frac12}\|\nabla q\|_{0,\Omega}+\rF^{-\frac13}\|\nabla q\|_{0,\frac32,\Omega}\bigr),\label{def:Qhat-norm}
    \end{align}
\end{subequations}
respectively. Denote as follows the trial and test function spaces for flux and concentration (following \cite[Section 3]{boon2021robust}) 
\[ \bW := \alpha^{-\frac12} \bL^2(\Omega) \cap \bH_{\Gamma_p}(\vdiv_{\frac65},\Omega), \quad \text{and}\quad\Psi := \alpha^{\frac12}\rH^1_{\Gamma_{\bu}}(\Omega) + \rL^6(\Omega), \]
equipped with the norms 
\begin{subequations}
\begin{align}\|\bw\|_{\bW} &:= \alpha^{-\frac12}\|\bw\|_{0,\Omega} + \| \vdiv \bw\|_{0,\frac65,\Omega},\quad\text{and} \\
\|\psi\|_{\Psi} &:= \inf_{s \in \alpha^{\frac12}\rH^1_{\Gamma_{\bu}}(\Omega)}\bigl(\|\psi-s\|_{0,6,\Omega} + \alpha^{\frac12}\|\nabla s\|_{0,\Omega} \bigr), \label{eq:Psi.norm}\end{align}\end{subequations}
respectively. The weak formulation of \eqref{eq:strong} consists in finding  $(\bu,p,\bzeta,\varphi)\in \bV\times \rQ\times \bW \times \Psi$ such that 
\begin{subequations}\label{eq:weak}
    \begin{align}
    a_1(\bu,\bv) + d_1(\bu; \bu,\bv) +  b_1(\bv,p) & = F^{\varphi}(\bv) \quad \forall \bv \in \bV,\label{eq:weak,1}\\
    b_1(\bu,q) & = 0 \quad \forall q \in \rQ,\label{eq:weak,2}\\
    a_2(\bzeta,\bw) + b_2(\bw,\varphi) + d_2^{\bu}(\bw, \varphi) & = 0 \quad \forall \bw \in \bW, \label{eq:weak,3}\\
    b_2(\bzeta, \psi) - c_2(\varphi,\psi) & = H(\psi) \quad \forall \psi \in \Psi, \label{eq:weak,4}
    \end{align}
\end{subequations}
where the  bilinear forms $a_1:\bV\times\bV\to\mathbb{R}$, $b_1:\bV\times\rQ \to \mathbb{R}$,  $a_2:\bW\times\bW\to\mathbb{R}$, $b_2:\bW\times\Psi \to \mathbb{R}$, $c_2: \Psi \times \Psi \to \mathbb{R}$,  $d_2^{\hat{\bu}}: \bW \times \Psi \to \mathbb{R}$ (for a fixed $\hat{\bu}$),  
linear functionals $F^{\hat{\varphi}}:\bV\to\mathbb{R}$ (for a fixed $\hat{\varphi}$), $H:\Psi \to \mathbb{R}$, and nonlinear form $d_1:\bV\times\bV\times\bV \to \mathbb{R}$, are defined as: 
\begin{gather*}
  a_1(\bu,\bv):= \int_\Omega \kappa^{-1}\bu\cdot\bv, \quad {b_1(\bv,q):= -\int_\Omega q\,\vdiv\bv},  \quad d_1(\bw;\bu,\bv):=\int_\Omega \rF |\bw|\bu\cdot\bv, \\ 
  a_2(\bzeta,\bw):= \int_\Omega \alpha^{-1}\bzeta\cdot\bw, \quad  b_2(\bw,\psi):= \int_\Omega \psi\,\vdiv\bw, \quad   c_2(\varphi,\psi):=\int_\Omega { \varrho_0} \varphi\,\psi, \\  
  d_2^{\hat{\bu}}(\bw,\psi):={\int_\Omega  \alpha^{-1}\hat{\bu}\cdot \bw \,\psi},\quad 
  F^{\hat{\varphi}}(\bv):= \int_\Omega \bf(\hat{\varphi})\cdot\bv, \quad H(\psi) := - \int_\Omega m\,\psi.   
\end{gather*}

Let $\bX$ and $Y$ be reflexive Banach spaces equipped with the norms $\|\bullet\|_{\bX}$ and $\|\bullet\|_Y$, respectively. Further, let \(Y_c\) be a dense linear subspace of \(Y.\) Consider the problem of finding $(\bsigma,u) \in \bX\times Y$ such that 
\begin{subequations}\label{eq:abstract}
    \begin{align}
    a(\bsigma,\btau) + b(\btau,u) & = F(\btau) \quad \forall \btau \in \bX, \label{eq:weak.1}\\
    b(\bsigma, v) - t^2c(u,v) & = G(v)\label{eq:weak.2} \quad \forall v\in {Y_c},
\end{align}\end{subequations}
where $0\leq t\leq 1$ is a fixed scaling parameter and  $a:\bX\times \bX \to \mathbb{R}$, $b:\bX\times Y \to \mathbb{R}$, and $c:{Y_c} \times {Y_c} \to \mathbb{R}$ are given bilinear forms. Here, also $(F,G) \in \bX'\times Y'$ are given linear forms.} Let $\bB:\bX\to Y'$ be a bounded linear map induced by $b(\bullet,\bullet)$ as $\bX \ni \btau\mapsto \bB\btau$ with $\langle \bB \btau, v\rangle := b(\btau,v)$ for all $v\in Y$.

We extend the Brezzi--Braess theorem (\cite[Theorem~2.1]{boon2021robust}) to the case of reflexive Banach spaces. For this we adapt  \cite[Theorem~49.15]{ern2021galerkin} to the case of perturbed saddle-point problems. The invertibility of $\cA$ will follow as a consequence of the following conditions
\begin{subequations}
   \begin{align}
\mathrm{BNB~1:}\qquad&\inf_{\substack{(\bsigma,u)\in\bX\times Y \\ (\bsigma,u)\neq(\bzero,0)}}
\ \sup_{\substack{(\btau,v)\in\bX\times Y \\ (\btau,v)\neq(\bzero,0)}}
\frac{\cA(\bsigma,u;\btau,v)}
     {\tnorm{(\bsigma,u)}_{\bX\times Y}\,
      \tnorm{(\btau,v)}_{\bX\times Y}}
 \geq C_{\natural} \quad \forall (\bsigma,u),(\btau,v)\in\bX\times Y,\\
\mathrm{BNB~2:}\qquad& \forall(\bsigma,u)\in\bX\times Y, \quad [\forall (\btau,v)\in\bX\times Y,\, \cA(\bsigma,u;\btau,v)=0]\implies[(\bsigma,u)={(\bzero, 0)}]\label{eq:BNB2},
\end{align}\end{subequations}
where \(C_{\natural}\) is a positive constant independent of \(t\) and {\(\tnorm{(\btau,v)}_{\bX\times Y}:=\|\btau\|_{\bX}+\|v\|_{Y}+t|v|_c,\) and \(|\bullet|^2_c={c(\bullet,\bullet)}\)}.
The main result is discussed below. 

\begin{theorem}[Robust well-posedness in Banach spaces]\label{th:abstract}
 Assume that  
\begin{itemize}
\item $a(\bullet,\bullet)$, $b(\bullet,\bullet)$, and $c(\bullet,\bullet)$  are bounded with {continuity constants} \(\|a\|, \|b\|\) and \(\|c\|,\) respectively, independent of the scaling parameter $t$. 
    \item $a(\bullet,\bullet)$ and $c(\bullet,\bullet)$ are symmetric and positive semi-definite.
    \item Inf-sup condition in $\bZ = \mathrm{ker}(\bB)$: there exists $C_a>0$, independent of the parameter $t$, such that 
    \begin{align}\label{eq:inf-sup.a} \sup_{\btau \in \bZ\setminus\{\boldsymbol{0}\}} \frac{a(\bsigma,\btau)}{\|\btau\|_{\bX}} \geq C_a \| \bsigma\|_{\bX} \quad \forall \bsigma \in \bZ.\end{align}
    \item Inf-sup condition: there exists $C_b>0$, independent of $t$, such that 
    \begin{align}\label{eq:inf-sup.b} \sup_{\btau \in \bX\setminus\{\boldsymbol{0}\}} \frac{b(\btau,v)}{\|\btau\|_{\bX}} \geq C_b \| v\|_Y \quad \forall v \in Y.\end{align}    
    \item Braess condition: there exists $C_c>0$, independent of $t$, such that 
    \begin{align}\label{eq:Braess}\sup_{(\btau,v) \in \bX\times Y \setminus\{(\boldsymbol{0},0)\}} \frac{a(\bsigma,\btau)+b(\bsigma,v)}{\tnorm{(\btau,v)}_{\bX\times Y}} \geq C_c \| \bsigma \|_{\bX} \quad \forall \bsigma \in \bX.\end{align} 
\end{itemize}
Then, for each pair $(F,G) \in \bX'\times Y'$, the problem \eqref{eq:abstract} is uniquely solvable. Moreover, there holds 
\[\tnorm{(\bsigma,u)}_{\bX\times Y} \leq C\bigl[ \|F\|_{\bX'} + \|G\|_{Y'}\bigr], \]
where \[C:=\max\bigl[\frac{1}{MC_c},6\beta_1^{-1}+9C_b^2\beta_1^{-2}\bigr],\]  
 does not depend on the scaling parameter \(\,t\), and \(M={\bigl({1+\frac{2\|a\|}{C_b}}\bigr)^{-1}} \text{ and } \beta_1=\min\{{C_b,\|a\|}\}\). 
\end{theorem}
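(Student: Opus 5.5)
We will apply the Banach--Ne\v{c}as--Babu\v{s}ka theorem \cite[Theorem~49.15]{ern2021galerkin} to the bilinear form
\[\cA(\bsigma,u;\btau,v):=a(\bsigma,\btau)+b(\btau,u)+b(\bsigma,v)-t^2c(u,v)\]
on $\bX\times Y$, equipped with the norm $\tnorm{\bullet}_{\bX\times Y}$. Problem \eqref{eq:abstract} is equivalent to $\cA(\bsigma,u;\btau,v)=F(\btau)+G(v)$, first for $v\in Y_c$ and then for all $v\in Y$ by density, the $c$-term being controlled by $t|v|_c$. Since $\cA$ is symmetric (because $a$ and $c$ are symmetric), condition BNB~2 follows from BNB~1 with the roles of trial and test pairs exchanged. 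It therefore suffices to prove BNB~1 with a constant $C_\natural=C^{-1}$ independent of $t$. The a priori bound $\tnorm{(\bsigma,u)}\le C(\|F\|_{\bX'}+\|G\|_{Y'})$ then follows from BNB~1 by taking $(\btau,v)$ as the maximiser.

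To prove BNB~1, fix $(\bsigma,u)$ and let $S:=\sup_{(\btau,v)}\cA(\bsigma,u;\btau,v)/\tnorm{(\btau,v)}$. We bound the three pieces $\|\bsigma\|_{\bX}$, $\|u\|_Y$ and $t|u|_c$ separately in terms of $S$, distinguishing two cases.

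\emph{Case 1: $\|\bsigma\|_{\bX}$ is large compared with $\|u\|_Y$.} This is the case giving the constant $1/(MC_c)$. Testing with $(\btau,0)$ and using \eqref{eq:inf-sup.b} gives
\[C_b\|u\|_Y\le \sup_{\btau} \frac{b(\btau,u)}{\|\btau\|_{\bX}}\le S+\|a\|\,\|\bsigma\|_{\bX}.\]
The Braess condition \eqref{eq:Braess} gives a test pair $(\btau,v)$ with $a(\bsigma,\btau)+b(\bsigma,v)\ge C_c\|\bsigma\|_{\bX}\tnorm{(\btau,v)}$. In $\cA$ the remaining terms are $b(\btau,u)-t^2c(u,v)$. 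These are bounded by $\|b\|\,\|u\|_Y\|\btau\|_{\bX}+t|u|_c\,t|v|_c$, so the issue is to control $\|u\|_Y+t|u|_c$.

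\emph{Case 2.} Here we use the test pair $(\bzero,-u)$, which gives $-\cA=t^2|u|_c^2-b(\bsigma,u)$, and we combine it with the pair $(\btau,v)$ from Braess. This combination is the standard Braess argument. In the Hilbert setting one takes $(\bsigma,-u)$ and uses positivity of $a$ to obtain $a(\bsigma,\bsigma)+t^2|u|_c^2\le S\tnorm{(\bsigma,u)}$. In the Banach setting $\bsigma$ is not an admissible test function in the dual sense, so this step must be replaced.

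\textbf{Main obstacle: controlling $t|u|_c$ without Riesz maps.} What I would try first is the following. The Braess condition produces a test pair $(\btau_0,v_0)$ realising $C_c\|\bsigma\|_{\bX}$. Then test with $(\btau_0,v_0)+\lambda(\btau_1,-u)$, where $\btau_1$ comes from the inf-sup condition \eqref{eq:inf-sup.b} applied to $u$ with $b(\btau_1,u)\ge C_b\|u\|_Y\|\btau_1\|_{\bX}$. With this choice, $-t^2c(u,-u)=t^2|u|_c^2$ appears with the good sign. The cross terms are then absorbed using Young's inequality with weights built from $\beta_1=\min\{C_b,\|a\|\}$ and $M=(1+2\|a\|/C_b)^{-1}$, which is where the constant $6\beta_1^{-1}+9C_b^2\beta_1^{-2}$ should come from. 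The kernel inf-sup condition \eqref{eq:inf-sup.a} is used to handle the part of $\bsigma$ lying in $\bZ$ if the Braess step alone is insufficient. Finally, we collect the bounds, take the maximum of the two case constants, and obtain $\tnorm{(\bsigma,u)}\le C\,S$.
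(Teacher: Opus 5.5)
\textbf{There is a genuine gap: BNB~1 is never proved.} Your reduction itself is sound. BNB~2 does follow from BNB~1 because $\mathcal{A}$ is symmetric, which is cleaner than the paper's direct argument, and the a priori bound follows from BNB~1. But your first case stops at ``the issue is to control $\|u\|_Y+t|u|_c$'', and the final paragraph is only a tentative plan.

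That plan rests on a false premise. The pair $(\bsigma,-u)$ \emph{is} an admissible test pair. $\mathcal{A}$ is a bilinear form on $(\bX\times Y)\times(\bX\times Y)$, trial and test spaces coincide, and no Riesz or duality map is involved. Since $\tnorm{(\bsigma,-u)}=\tnorm{(\bsigma,u)}$, you get
\[
a(\bsigma,\bsigma)+t^2|u|_c^2=\mathcal{A}(\bsigma,u;\bsigma,-u)\le S\,\tnorm{(\bsigma,u)}.
\]
This is precisely the key step in the paper, namely its bound $t|u|_c\le\mathcal{A}(\bsigma,u;\bsigma,-u)^{1/2}$.

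Your substitute does not close. With the test pair $(\btau_0+\lambda\btau_1,\,v_0-\lambda u)$, the term $b(\bsigma,-\lambda u)=-\lambda b(\bsigma,u)$ no longer cancels against $b(\btau,u)$, because $\btau_1\neq\bsigma$. Together with $b(\btau_0,u)$ and $\lambda a(\bsigma,\btau_1)$, it is of order $\|\bsigma\|_{\bX}\|u\|_Y$ with coefficients comparable to $\|a\|$ and $\|b\|$. The positive terms you have are only $C_c\|\bsigma\|_{\bX}^2$ and $\lambda C_b\|u\|_Y^2$. Absorbing the cross terms by Young's inequality would need an extra condition such as $C_bC_c\gtrsim\|b\|(\|a\|+\|b\|)$, which is not a hypothesis. (Minor slip: $\mathcal{A}(\bsigma,u;\bzero,-u)=t^2|u|_c^2-b(\bsigma,u)$, with no minus sign in front of $\mathcal{A}$.)

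\textbf{How the paper proceeds.} The paper splits according to whether $\|\bsigma\|_{\bX}\le\frac{C_b}{2\|a\|}(\|u\|_Y+t|u|_c)$.
\begin{itemize}
\item In that regime it combines $C_b\|u\|_Y\le S+\|a\|\,\|\bsigma\|_{\bX}$, obtained from inf-sup for $b$ with test pairs $(\btau,0)$, with the diagonal bound on $t|u|_c$. The inequality $x\le y+z\Rightarrow x\le 2y+z^2/x$ then yields $6\beta_1^{-1}+9C_b^2\beta_1^{-2}$.
\item Otherwise $\|\bsigma\|_{\bX}\ge M\tnorm{(\bsigma,u)}$, and the Braess condition gives $1/(MC_c)$.
\end{itemize}

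\textbf{Your worry about the Braess regime is legitimate.} The paper passes from $a(\bsigma,\btau)+b(\bsigma,v)$ to $\mathcal{A}(\bsigma,u;\btau,0)=a(\bsigma,\btau)+b(\btau,u)$ without accounting for the terms $b(\btau,u)-t^2c(u,v)$ that you identified.

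These terms are handled by the same diagonal quantity $E:=(a(\bsigma,\bsigma)+t^2|u|_c^2)^{1/2}\le(S\,\tnorm{(\bsigma,u)})^{1/2}$. Write $b(\bsigma,v)=\mathcal{A}(\bsigma,u;\bzero,v)+t^2c(u,v)$. Then use Cauchy--Schwarz for the symmetric semidefinite forms: $|a(\bsigma,\btau)|\le\|a\|^{1/2}E\,\|\btau\|_{\bX}$ and $t^2|c(u,v)|\le E\,t|v|_c$. This gives
\[
C_c\|\bsigma\|_{\bX}\le S+(1+\|a\|^{1/2})E,\qquad C_b\|u\|_Y\le S+\|a\|^{1/2}E,\qquad t|u|_c\le E.
\]
Summing these and applying Young's inequality gives $\tnorm{(\bsigma,u)}\le C\,S$. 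Here $C$ depends only on $C_b$, $C_c$ and $\|a\|$, and not on $t$. The argument needs no case split and no kernel condition on $\bZ$. The explicit constant it produces differs from the one in the statement.
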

\begin{proof}
We follow closely  the analysis in \cite[Theorem~49.15]{ern2021galerkin} and \cite[Lemma 3]{braess1996stability}. 
First, we define \begin{align}\label{eq:curl.A}\mathcal{A}(\bsigma,u;\btau,v):=a(\bsigma,\btau)+b(\btau,u)+b(\bsigma,v)-t^2c(u,v), \quad \forall (\bsigma,u),(\btau,v)\in \bX \times Y,\end{align}
and then, we focus on proving \(\mathrm{BNB~1}\), for which we  consider two cases. 
 
\smallskip 
\noindent\textbf{Case~1.} Let us first assume that \begin{align}\label{eq:case.1}\|\bsigma\|_{\bX}\leq \frac{C_b}{2\|a\|}\{\|u\|_Y+t|u|_c\}.\end{align}
Re-writing \(\tnorm{(\bullet,\bullet)}_{\bX\times Y} \) in the following form, with \(\beta_1=\min\{{C_b,\|a\|}\},\) and using \eqref{eq:case.1}, we obtain 
\begin{align}\label{eq:case.1.a}\tnorm{(\bsigma,u)}_{\bX\times Y}=\|\bsigma\|_{\bX}+\|u\|_{Y}+t|u|_c\nonumber&\leq \beta_1^{-1}\bigl(\|a\|\|\bsigma\|_{\bX}+C_b\|u\|_Y+C_bt|u|_c\bigr)\nonumber\\
&\leq {\frac32C_b\beta_1^{-1}}\{\|u\|_Y+t|u|_c\}.
\end{align}
Note that the positive semi-definiteness of \(a(\bullet,\bullet)\) yields \begin{align}\label{eq:case.1.b}
t|u|_c\leq\bigl[a(\bsigma,\bsigma)+t^2|u|_c^2\bigr]^{\frac12}=\big(\mathcal{A}(\bsigma,u;\bsigma,-u)\Big)^{\frac12}.\end{align}
We continue with the inf-sup condition of \(b(\bullet,\bullet)\)  and use \eqref{eq:curl.A}. Then, we use the boundedness of \(a(\bullet,\bullet)\) and \eqref{eq:case.1} to derive the following upper bound of \(\|u\|_Y\) 
\begin{align}\label{eq:case.1.c}
C_b\|u\|_Y\leq\sup_{\bzero\neq\btau\in\bX}\frac{b(\btau,u)}{\|\btau\|_{\bX}}&=\sup_{\bzero\neq\btau\in\bX}\frac{\mathcal{A}(\bsigma,u;\btau,0)-a(\bsigma,\btau)}{\|\btau\|_{\bX}}\nonumber\\
&\leq\sup_{(\bzero,0)\neq(\btau,v)\in{\bX\times Y}}\frac{\mathcal{A}(\bsigma,u;\btau,v)}{\tnorm{(\btau,v)}_{\bX\times Y}}+\|a\|\|\bsigma\|_{\bX}\nonumber\\
&\leq\sup_{(\bzero,0)\neq(\btau,v)\in{\bX\times Y}}\frac{\mathcal{A}(\bsigma,u;\btau,v)}{\tnorm{(\btau,v)}_{\bX\times Y}}+\frac{C_b}{2}\{\|u\|_Y+t|u|_c\}.
\end{align}
Note that, \eqref{eq:case.1.c} provides a bound on \({\|u\|_{Y}}\) once one subtracts \(\frac{C_b\|u\|_Y}{2}\) from both sides first, and then multiplies both sides by \({2}{C_b}^{-1}\).
Invoking \eqref{eq:case.1.b}-\eqref{eq:case.1.c} in \eqref{eq:case.1.a}, we obtain 
\begin{align}\label{eq:case2.bound}\tnorm{(\bsigma,u)}_{\bX\times Y}&\leq 3\beta_1^{-1}\sup_{(\bzero,0)\neq(\btau,v)\in{\bX\times Y}}\frac{\mathcal{A}(\bsigma,u;\btau,v)}{\tnorm{(\btau,v)}_{\bX\times Y}}+3C_b\beta_1^{-1}t|u|_c\nonumber\\
&\leq 3\beta_1^{-1}\sup_{(\bzero,0)\neq(\btau,v)\in{\bX\times Y}}\frac{\mathcal{A}(\bsigma,u;\btau,v)}{\tnorm{(\btau,v)}_{\bX\times Y}}+3C_b\beta_1^{-1}\mathcal{A}(\bsigma,u;\bsigma,-u)^{\frac12}.
\end{align}
We recall that for any three positive real numbers \(x, y, z\) such that \(x\leq y+z\), an application of the Young's inequality shows \(x\leq2y+\frac{z^2}{x}\) (see, \cite[Equation~3.4]{braess1996stability}). Choose \[x=\tnorm{(\bsigma,u)}_{\bX\times Y},\, y = 3\beta_1^{-1}\sup_{(\bzero,0)\neq(\btau,v)\in{\bX\times Y}}\frac{\mathcal{A}(\bsigma,u;\btau,v)}{\tnorm{(\btau,v)}_{\bX\times Y}} \quad \text{ and }\quad  z= 3C_b\beta_1^{-1}\mathcal{A}(\bsigma,u;\bsigma,-u)^{\frac12},\] in this context, 
to obtain the following stability bound 
\begin{align*}
\tnorm{(\bsigma,u)}_{\bX\times Y}&\leq 6\beta_1^{-1}\sup_{(\bzero,0)\neq(\btau,v)\in{\bX\times Y}}\frac{\mathcal{A}(\bsigma,u;\btau,v)}{\tnorm{(\btau,v)}_{\bX\times Y}}+9C_b^2\beta_1^{-2}\frac{\mathcal{A}(\bsigma,u;\bsigma,-u)}{\tnorm{(\bsigma,u)}_{\bX\times Y}}\\
&\leq(6\beta_1^{-1}+9C_b^2\beta_1^{-2})\sup_{(\bzero,0)\neq(\btau,v)\in{\bX\times Y}}\frac{\mathcal{A}(\bsigma,u;\btau,v)}{\tnorm{(\btau,v)}_{\bX\times Y}}.
\end{align*}

\smallskip 
\noindent
\textbf{Case~2.} On the other hand, we, now, assume that 
\begin{align}\label{eq:case.2}\|\bsigma\|_{\bX}\geq \frac{C_b}{2\|a\|}\{\|u\|_Y+t|u|_c\}.\end{align}
Then, adding \(\|\bsigma\|_{\bX}\) to both sides of \eqref{eq:case.2} shows \begin{align}\label{eq:case2.bound-x}
\|\bsigma\|_{\bX}\geq M\tnorm{(\bsigma,u)}_{\bX\times Y},\quad \text{where } M={\bigl({1+\frac{2\|a\|}{C_b}}\bigr)^{-1}}.\end{align} 
Multiplying by \(C_c\) to both sides of \eqref{eq:case2.bound-x} and applying the Braess condition, yields
\begin{align*}
    MC_c\tnorm{(\bsigma,u)}_{\bX\times Y}&\leq C_c\|\bsigma\|_{\bX}
    \leq\sup_{(\bzero,0)\neq(\btau,v)\in{\bX\times Y}}\frac{a(\bsigma,\btau)+b(\bsigma,v)}{\tnorm{(\btau,v)}_{\bX\times Y}}
    \\&\leq \sup_{(\bzero,0)\neq(\btau,v)\in{\bX\times Y}}\frac{\mathcal{A}(\bsigma,u;\btau,0)}{\tnorm{(\btau,v)}_{\bX\times Y}}
    \leq \sup_{(\bzero,0)\neq(\btau,v)\in{\bX\times Y}}\frac{\mathcal{A}(\bsigma,u;\btau,v)}{\tnorm{(\btau,v)}_{\bX\times Y}}.
\end{align*}
Combining the two cases, we arrive at the following stability bound 
\[\tnorm{(\bsigma,u)}_{\bX\times Y}\leq C  \sup_{(\bzero,0)\neq(\btau,v)\in{\bX\times Y}}\frac{\mathcal{A}(\bsigma,u;\btau,v)}{\tnorm{(\btau,v)}_{\bX\times Y}},\] where \begin{align}\label{eq:C}C:=\max\bigl[\frac{1}{MC_c},6\beta_1^{-1}+9C_b^2\beta_1^{-2}\bigr].\end{align}

{We now turn to showing \(\mathrm{BNB}~2\). We consider a fixed \((\bsigma,u)\in \bX\times Y\) for which \(\mathcal{A}(\bsigma,u;\btau,v)= 0\) {for all $(\btau,v) \in \bX\times Y$}, i.e., 
\begin{subequations}\label{eq:abstract.0}
    \begin{align}
    a(\bsigma,\btau) + b(\btau,u) & = 0 \quad \forall \btau \in \bX, \label{eq:weak.0.1}\\
    b(\bsigma, v) - t^2c(u,v) & = 0\label{eq:weak.0.2} \quad \forall v\in {Y_c}.
\end{align}\end{subequations}
Our goal is to show that \((\bsigma,u)=(\bzero,0).\) 
Substituting \(v=-u\) in \eqref{eq:weak.0.2} and then using the positive semi-definiteness of \(c(\bullet,\bullet),\) we conclude that \(b(\bsigma,u)=0\) (note that, the inf-sup condition of \(b(\bullet,\bullet)\) guarantees that, \(b(\bsigma,u)\geq 0\) for all \((\bsigma, u) \in \bX \times Y\)), which in particular implies that \(\bsigma\in\ker(\bB)\).
Taking \(\btau\in\ker(\bB)\) in \eqref{eq:weak.0.1}, and using \eqref{eq:inf-sup.a}, we infer that \(a(\bsigma,\btau)=0,\,\forall\tau\in\bZ,\) which further implies that \(\bsigma=\bzero\) (using \eqref{eq:inf-sup.a}). Finally, \eqref{eq:weak.0.1} yields \(b(\btau,u)=0,\,\forall \btau\in \bX,\)  and using \eqref{eq:inf-sup.b}, we conclude that \(u=0\).} This completes the rest of the proof.
\end{proof}
\begin{remark}
Unfortunately, we cannot readily derive   \(t\)-robust stability   such as  Theorem~\ref{th:abstract} by directly applying \cite[Theorem~3.4]{correa2022continuous} with the \(t\)-dependent norm \(\tnorm{(\bullet,\bullet)}_{\bX\times Y}:=\|\bullet\|_{\bX}+\|\bullet\|_Y+t|\bullet|_{c}\). This is  because the bounds \cite[Eq.~(3.56)-(3.58)]{correa2022continuous} depend explicitly on the scaling parameter \(t\) as follows
   \begin{align*}
       \tnorm{(\bsigma,u)}_{\bX\times Y}\leq& \bigl[\frac{1}{C_b}+(1+\frac{\|a\|}{C_b})\bigl(\frac{1}{C_a}+\frac{(2\max\bigl(C^{\sharp}_1,C^{\sharp}_2\bigr)^{\frac12})}{t}\bigr)+\\&\bigl[\frac{1}{C_b}+t(1+\frac{\|a\|}{C_b})\bigl(\frac{1}{C_a}+\frac{(2\max\bigl(C^{\sharp}_1,C^{\sharp}_2\bigr)^{\frac12})}{t}\bigr)\bigr] \|F\|_{\bX'}+ \bigl[(\frac{\|a\|}{C_b}+(1+\frac{\|a\|}{C_b})\frac{1}{C_b}+\\&\bigl(\frac{1}{C_a}+\frac{(2\max\bigl(C^{\sharp}_1,C^{\sharp}_2\bigr)^{\frac12})}{t}\bigr)+\bigl[\frac{1}{C_b}+t(1+\frac{\|a\|}{C_b})\bigl(\frac{1}{C_a}+\frac{(2\max\bigl(C^{\sharp}_1,C^{\sharp}_2\bigr)^{\frac12})}{t}\bigr)\bigr]\|G\|_{Y'},
   \end{align*}
   where \begin{align*}C^{\sharp}_1&:=(1+\frac{\|a\|}{C_a})\frac{1}{C_a}\frac{1}{C_b}+t^2\frac{\|c\|}{C_b^2}\bigl(1+\frac{\|a\|}{C_a}\bigr)^2,\qquad \text{and }\\
   C^{\sharp}_2&:=\frac{1}{C_b}(1+\frac{\|a\|}{C_a})\bigl[1+\frac{\|a\|}{C_b}+t^2\frac{\|a\|^2\|c\|}{C_b^3}\bigl(1+\frac{\|a\|}{C_a}\bigr)\bigr],
   \end{align*}
   and the stability  {deteriorates} with $t\to 0$. In addition, note that the proof of Theorem~\ref{th:abstract} differs from \cite[Lemma~3.1]{braess1996stability} as the Braess condition \eqref{eq:Braess} is different from \cite[(3.1)]{braess1996stability}.
\end{remark}

\section{Existence and uniqueness of weak solution}\label{sec:existence}
This section  employs a fixed-point framework to split the Darcy--Forchheimer equations from the ADR system. After showing that the two uncoupled systems are well-posed, we  construct a fixed-point operator and then prove that it maps a ball into itself, and it satisfies the contraction mapping theorem. The treatment is similar to the analysis in, e.g.,  \cite{caucao2021fully,correa2023new,gatica2022p}.

\subsection{Unique solvability of the decoupled sub-problems}
Now, for a fixed concentration $\hat{\varphi}\in \Psi$, consider the Darcy--Forchheimer sub-system in weak form as follows
\begin{subequations}\label{eq:weak.fixed.u.hat}
    \begin{align}
    a_1(\bu,\bv) + d_1(\bu; \bu,\bv) +  b_1(\bv,p) & = F^{\hat\varphi}(\bv) \quad \forall \bv \in \bV,\label{eq:weak,1.hat.u}\\
    b_1(\bu,q) & = 0 \quad \forall q \in \rQ.\label{eq:weak,2.hat.u}
    \end{align}
\end{subequations}
We recall from \cite[Theorem~3.3]{dhpr_sinum26} the following result for the well-posedness of the system \eqref{eq:weak,1.hat.u}-\eqref{eq:weak,2.hat.u}.

\begin{theorem}[Well-posedness of  Darcy--Forchheimer equations]\label{th:continuous-DF}
Assume that   $\bf(\hat{\varphi})\in \bL^2(\Omega)$ { for a fixed  $\hat{\varphi}\in \Psi.$}  Then there exists a unique  solution $(\bu,p)\in \bV\times\rQ$ to the problem \eqref{eq:weak,1.hat.u}-\eqref{eq:weak,2.hat.u}. Furthermore, there is a positive constant $C^{\mathsf{DF}}_{c},$ independent of the model parameters $\kappa, \rF$, such that 
\begin{equation*}
 \|(\bu, p )\|_{\bV\times\rQ}  \leq C^{\mathsf{DF}}_{c}\max \bigl\{\|\bf(\hat{\varphi})\|_{0,\Omega} ,\|\bf(\hat{\varphi})\|_{0,\Omega}^2\bigr\}.
    \end{equation*}
\end{theorem}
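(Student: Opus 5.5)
The plan is to treat \eqref{eq:weak,1.hat.u}--\eqref{eq:weak,2.hat.u} as a nonlinear saddle-point problem and to reduce it to the kernel of $b_1$.

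\textbf{Step 1: the constraint.} Let $\bZ_1:=\{\bv\in\bV:\ b_1(\bv,q)=0\ \forall q\in\rQ\}$. Since $\rL^2(\Omega)\subset\rQ$, every $\bv\in\bZ_1$ satisfies $\vdiv\bv=0$. On $\bZ_1$ the $\bV$-norm therefore reduces to $\kappa^{-1/2}\|\bv\|_{0,\Omega}+\rF^{1/3}\|\bv\|_{0,3,\Omega}$.

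\textbf{Step 2: the velocity on the kernel.} Define the operator $\mathcal{N}:\bZ_1\to\bZ_1'$ by
\[\langle\mathcal{N}(\bu),\bv\rangle=a_1(\bu,\bv)+d_1(\bu;\bu,\bv).\]
\begin{itemize}
\item It is hemicontinuous, since $\bu\mapsto|\bu|\bu$ is continuous from $\bL^3$ into $\bL^{3/2}$.
\item It is strictly monotone, because the elementary inequality $(|\bx|\bx-|\by|\by)\cdot(\bx-\by)\ge \tfrac12|\bx-\by|^3$ gives $\langle\mathcal{N}(\bu)-\mathcal{N}(\bw),\bu-\bw\rangle\ge \kappa^{-1}\|\bu-\bw\|_{0,\Omega}^2+\tfrac{\rF}{2}\|\bu-\bw\|_{0,3,\Omega}^3$.
\item It is coercive: $\langle\mathcal{N}(\bv),\bv\rangle=\kappa^{-1}\|\bv\|^2_{0,\Omega}+\rF\|\bv\|^3_{0,3,\Omega}$, and this grows superlinearly in $\|\bv\|_{\bV}$.
\end{itemize}
By Minty--Browder on the reflexive space $\bZ_1$, there is a unique $\bu\in\bZ_1$ with $\langle \mathcal{N}(\bu),\bv\rangle=F^{\hat\varphi}(\bv)$ for all $\bv\in\bZ_1$.

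For the parameter-free bound I test with $\bv=\bu$ and use $|F^{\hat\varphi}(\bu)|\le\|\bf(\hat\varphi)\|_{0,\Omega}\|\bu\|_{0,\Omega}$. The delicate point is that $\|\bu\|_{0,\Omega}$ is not controlled by $\kappa^{-1/2}\|\bu\|_{0,\Omega}$ uniformly in $\kappa$. I would instead use the $\rL^2$ term of the pressure space: $\bf(\hat\varphi)$ acts on $\bV$ through the dual of the intersection norm, i.e.\ through the sum space $\kappa^{1/2}\bL^2+\bL^2+\rF^{-1/3}\bL^{3/2}$. Splitting by cases on which of $\kappa^{-1}\|\bu\|_0^2$ and $\rF\|\bu\|_{0,3}^3$ dominates should give
\[\kappa^{-1/2}\|\bu\|_{0,\Omega}+\rF^{1/3}\|\bu\|_{0,3,\Omega}\lesssim \max\{\|\bf\|,\|\bf\|^2\}.\]
This is where the $\max\{\cdot,\cdot^2\}$ in the statement enters. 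Making the constant truly independent of $\kappa$ and $\rF$ is, in my view, the \emph{main obstacle}. I expect it to be resolved by interpreting the forcing through its $\rQ$-dual action, as in \cite{dhpr_sinum26}, rather than by a naive Cauchy--Schwarz estimate.

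\textbf{Step 3: the pressure.} I need a robust inf-sup condition for $b_1$ on $\bV\times\rQ$:
\[\sup_{\bv}\frac{b_1(\bv,q)}{\|\bv\|_{\bV}}\gtrsim\|q\|_{\rQ}.\]
I would prove it by duality, using $(\rL^2+\kappa^{1/2}\rH^1+\rF^{-1/3}\rW^{1,3/2})'=\rL^2\cap\kappa^{-1/2}(\rH^1)'\cap\rF^{1/3}(\rW^{1,3/2})'$ from \eqref{eq:duality}. Concretely, given $q$, I would build $\bv$ from a right inverse of the divergence that is simultaneously bounded $\rL^2\to\bH^1\subset\bL^3$ (Bogovskii-type), combined with $\bv=\nabla$-type test functions that realise the gradient parts. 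Existence of $p$ then follows from the closed range theorem, because $\mathcal{N}(\bu)-F^{\hat\varphi}$ vanishes on $\bZ_1$.

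\textbf{Step 4: the estimate and uniqueness.} The pressure bound follows from the first equation:
\[\|p\|_{\rQ}\lesssim \|a_1(\bu,\cdot)+d_1(\bu;\bu,\cdot)\|_{\bV'}+\|F^{\hat\varphi}\|_{\bV'}.\]
Both terms are bounded in weighted form: Hölder gives $|d_1(\bu;\bu,\bv)|\le\rF^{1/3}\|\bu\|^2_{0,3}\,\rF^{1/3}\|\bv\|_{0,3}$, which is quadratic in the velocity norm and so again produces the squared data term. Uniqueness of $p$ follows from the inf-sup condition of Step 3.
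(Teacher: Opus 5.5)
The paper does not prove this theorem: it imports it from \cite[Theorem~3.3]{dhpr_sinum26}, whose ingredients (as they are used later in the paper) are a strong-monotonicity constant $\alpha_{\mathsf{DF}}$ for $a_1+d_1$ on the kernel and a right inverse of the divergence for the inf-sup. Your skeleton (Minty--Browder on $\bZ_1$, then $p$ from an inf-sup condition and the closed range theorem) is sound for fixed $\kappa,\rF$. The content of the theorem, however, is the parameter-robust bound, and Step~2 leaves it open along a route that cannot work.

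On $\bZ_1$ the pressure space plays no role. Set $x:=\kappa^{-1/2}\|\bu\|_{0,\Omega}$ and $y:=\rF^{1/3}\|\bu\|_{0,3,\Omega}$. Testing with $\bu$ then gives only $x^2+y^3\le\|F^{\hat\varphi}\|_{\bV'}(x+y)$. Your case split yields $x+y\le\max\{4\|F^{\hat\varphi}\|_{\bV'},(8\|F^{\hat\varphi}\|_{\bV'})^{1/2}\}$, so the cubic term produces a square root, not a square. The square in the statement comes from the pressure, via $|d_1(\bu;\bu,\bv)|\le y^2\,\rF^{1/3}\|\bv\|_{0,3,\Omega}$, as you note in Step~4.

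Moreover, with the norm you use in Step~1, $\bV'$ has no unweighted $\bL^2$ component. All you get is $\|F^{\hat\varphi}\|_{\bV'}\le\min\{\kappa^{1/2},|\Omega|^{1/6}\rF^{-1/3}\}\|\bf(\hat\varphi)\|_{0,\Omega}$. If you add $\|\bv\|_{0,\Omega}$ to the $\bV$-norm to obtain $\|F^{\hat\varphi}\|_{\bV'}\le\|\bf(\hat\varphi)\|_{0,\Omega}$, that new part of $\|\bu\|_{\bV}$ is no longer controlled by the energy identity.

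This is not an artefact of your method. Take $\Omega=(0,1)^2$, $\Gamma_{\bu}$ the two horizontal edges, and $\bf\equiv(f,0)^{\tt t}$ with constant $f>0$. The exact solution is $p=0$ and $\bu=(u,0)^{\tt t}$, where $u>0$ solves $\kappa^{-1}u+\rF u^2=f$. Hence $\|\bu\|_{\bV}=(\kappa^{-1/2}+\rF^{1/3})u$ and $\|\bf\|_{0,\Omega}=f$.

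For $\kappa=1$ and $\rF=2/f$ one gets $u=f/2$ and $\|\bu\|_{\bV}\ge 2^{-2/3}f^{2/3}$, which is not $O(\max\{f,f^2\})=O(f)$ as $f\to 0$. For $\rF=\kappa^{-3}$ and $\kappa\to\infty$ one gets $u\approx\kappa f$ and $\|\bu\|_{\bV}\approx\kappa^{1/2}f\to\infty$ at fixed $f$.

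So no argument can deliver the estimate with $\|\bf(\hat\varphi)\|_{0,\Omega}$ on the right and a constant independent of both $\kappa$ and $\rF$. You must either impose upper bounds on $\kappa$ and $\rF$, or measure the data in $\|F^{\hat\varphi}\|_{\bV'}$ and accept the exponent $\tfrac12$ for the velocity.

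Separately, Step~3 is too vague to give a robust inf-sup condition. A Bogovskii right inverse alone only gives $\|\bv\|_{\bV}\lesssim(\kappa^{-1/2}+1+\rF^{1/3})\|q\|_{0,\Omega}$. Using different test functions for different summands of $q$ does not control the infimum-defined $\rQ$-norm, because the cross terms are uncontrolled. What is needed (compare Lemma~\ref{lem:inf-sup:ARD}) is a \emph{single} operator $\cS$ with $\vdiv\cS\ell=\ell$ that is bounded simultaneously as $(\rH^1_{\Gamma_p})'\to\bL^2$ and $(\rW^{1,\frac32}_{\Gamma_p})'\to\bL^3$. Then $\|\cS\ell\|_{\bV}\lesssim\|\ell\|_{\rQ'}$ for the intersection space $\rQ'$, and you test with $\bv=\cS\ell$.
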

In particular, there is a positive constant ${C^{\mathsf{DF}}_{c,\bu}}$ (see, \cite[Equation~3.10]{dhpr_sinum26}), independent of   $\bu, \kappa,$ and $ \rF$, such that 
 \begin{align}\label{eq:u.estimate}
    \|\bu\|_{\bV}\leq {C^{\mathsf{DF}}_{c,\bu}}\,\|\bf(\hat{\varphi})\|_{0,\Omega} .\end{align}
    In what follows, we state and prove the unique-solvability result for the ADR equations. Prior to this, we establish the following {lemma}, which  later assists in proving the parameter-robust inf-sup stability. 
{\begin{lemma}\label{lem:inf-sup:ARD}
   There exists an operator \(\cS_c\) satisfying the following properties:
    \begin{gather}\nonumber
    \cS_c \in \cL(\Psi', \bW),\quad 
    \|\cS_c\|_{\cL(\Psi', \bW)}\ \text{is independent of } \alpha,{\varrho_0},\quad \text{and}\\
    {{\langle{\vdiv}_{\frac65} [\cS_c(\ell)], \psi\rangle}} = \langle \ell, \psi \rangle \quad \text{for all}\ \ell \in \Psi',\psi \in \Psi.\label{S:prop}
\end{gather}

\end{lemma}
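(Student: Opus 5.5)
Since $\Psi=\alpha^{\frac12}\rH^1_{\Gamma_{\bu}}(\Omega)+\rL^6(\Omega)$ and $\rH^1_{\Gamma_{\bu}}(\Omega)\cap\rL^6(\Omega)=\rH^1_{\Gamma_{\bu}}(\Omega)$ is dense in both summands, the duality relation \eqref{eq:duality} identifies
\[\Psi'=\alpha^{-\frac12}\rH^{-1}_{\Gamma_{\bu}}(\Omega)\cap\rL^{\frac65}(\Omega),\qquad \|\ell\|_{\Psi'}=\alpha^{-\frac12}\|\ell\|_{\rH^{-1}}+\|\ell\|_{0,\frac65,\Omega}.\]
So any $\ell\in\Psi'$ is a function in $\rL^{6/5}(\Omega)$ whose $\rH^{-1}$ norm is controlled. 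I would take $\cS_c$ as the solution operator of a mixed Poisson problem with mixed boundary conditions. Given $\ell$, let $\phi\in\rH^1_{\Gamma_{\bu}}(\Omega)$ solve
\[(\nabla\phi,\nabla\psi)_{0,\Omega}=\langle\ell,\psi\rangle\quad\forall\psi\in\rH^1_{\Gamma_{\bu}}(\Omega).\]
This is well posed by Lax--Milgram and Poincaré, since $|\Gamma_{\bu}|>0$. Then set $\cS_c(\ell):=-\nabla\phi$. In strong form, $\vdiv\cS_c(\ell)=\ell$ in $\Omega$ and $\cS_c(\ell)\cdot\bn=0$ on $\Gamma_p$. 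Hence $\vdiv\cS_c(\ell)\in\rL^{6/5}(\Omega)$ and $\cS_c(\ell)\in\bH_{\Gamma_p}(\vdiv_{\frac65},\Omega)$. The identity \eqref{S:prop} follows for $\psi\in\rH^1_{\Gamma_{\bu}}$ by integration by parts, and extends to all of $\Psi$ by density. Note that neither $\alpha$ nor $\varrho_0$ enters the construction at all.

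The key step is the norm bound. The divergence part is immediate:
\[\|\vdiv\cS_c(\ell)\|_{0,\frac65,\Omega}=\|\ell\|_{0,\frac65,\Omega}\le\|\ell\|_{\Psi'}.\]
For the weighted $\rL^2$ part, the energy estimate gives $\|\nabla\phi\|_{0,\Omega}\le\|\ell\|_{\rH^{-1}}$. Therefore
\[\alpha^{-\frac12}\|\cS_c(\ell)\|_{0,\Omega}\le\alpha^{-\frac12}\|\ell\|_{\rH^{-1}}\le\|\ell\|_{\Psi'}.\]
Adding the two bounds gives $\|\cS_c\|_{\cL(\Psi',\bW)}\le 2$, which is independent of $\alpha$ and $\varrho_0$.

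The main obstacle is justifying the dual characterisation rigorously. Two points need care. First, the weight must be arranged so that the $\rH^{-1}$ term carries exactly $\alpha^{-1/2}$, matching $\alpha^{-1/2}\|\bw\|_{0,\Omega}$ in $\|\bw\|_{\bW}$. Second, one must check that the $\rH^{-1}$ norm used in the energy estimate is the dual norm of $\psi\mapsto\|\nabla\psi\|_{0,\Omega}$ on $\rH^1_{\Gamma_{\bu}}(\Omega)$, which is consistent with \eqref{eq:Psi.norm}.

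If one prefers to avoid invoking \eqref{eq:duality}, the bound can be checked directly. Take any $\psi\in\Psi$ and any splitting $\psi=(\psi-s)+s$. Then
\[|\langle\ell,\psi\rangle|\le\|\ell\|_{0,\frac65,\Omega}\,\|\psi-s\|_{0,6,\Omega}+\alpha^{-\frac12}\|\ell\|_{\rH^{-1}}\,\alpha^{\frac12}\|\nabla s\|_{0,\Omega}.\]
Taking the infimum over $s$ shows that the intersection norm dominates $\|\ell\|_{\Psi'}$, which is all the argument needs. The density of $\rH^1_{\Gamma_{\bu}}(\Omega)$ in $\Psi$, used to extend \eqref{S:prop}, is standard.
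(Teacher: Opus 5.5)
Your proof is correct and follows the same route as the paper: $\cS_c(\ell)=-\nabla\phi$ for a Poisson problem with mixed boundary conditions. The two parts of $\|\cdot\|_{\bW}$ are then controlled by the two parts of $\Psi'=(\alpha^{\frac12}\rH^1_{\Gamma_{\bu}}(\Omega))'\cap(\rL^6(\Omega))'$. The paper builds $\cS_c$ from two separately constructed pieces and glues them via \eqref{eq:duality}: a lemma imported from the Darcy--Forchheimer analysis handles the $\rH^1$ part, and a Laplace problem handles the $\rL^6$ part. Your single operator, bounded for both pairs of spaces at once, is what that gluing actually requires. Your choice $\phi\in\rH^1_{\Gamma_{\bu}}(\Omega)$ with $-\Delta\phi=\ell$ and the natural condition on $\Gamma_p$ gives the zero normal trace on $\Gamma_p$ that $\bW$ demands, and it gives $\vdiv\cS_c(\ell)=\ell$. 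As written, the paper's auxiliary problem ($\Delta\vartheta=\ell$, $\vartheta=0$ on $\Gamma_p$, $\cS_c(\ell)=-\nabla\vartheta$) instead gives zero normal trace on $\Gamma_{\bu}$ and divergence $-\ell$.

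One step in your fallback paragraph goes the wrong way. Splitting $\psi=(\psi-s)+s$ yields $\|\ell\|_{\Psi'}\le\max\{\alpha^{-\frac12}\|\ell\|_{\rH^{-1}},\|\ell\|_{0,\frac65,\Omega}\}$. The estimate $\|\cS_c(\ell)\|_{\bW}\le 2\|\ell\|_{\Psi'}$ needs the converse bounds, $\|\ell\|_{0,\frac65,\Omega}\le\|\ell\|_{\Psi'}$ and $\alpha^{-\frac12}\|\ell\|_{\rH^{-1}}\le\|\ell\|_{\Psi'}$, so that paragraph does not supply what the argument needs.

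The converse bounds are immediate. Restrict the supremum defining $\|\ell\|_{\Psi'}$ to $\psi\in\rL^6(\Omega)$, where $\|\psi\|_{\Psi}\le\|\psi\|_{0,6,\Omega}$ (take $s=0$). Then restrict it to $\psi\in\rH^1_{\Gamma_{\bu}}(\Omega)$, where $\|\psi\|_{\Psi}\le\alpha^{\frac12}\|\nabla\psi\|_{0,\Omega}$ (take $s=\psi$). The first restriction also represents $\ell$ by an $\rL^{\frac65}(\Omega)$ function. With these bounds the proof no longer needs \eqref{eq:duality}. Combined with your splitting, they show the exact dual norm is the maximum rather than the sum, so your displayed identification of $\|\ell\|_{\Psi'}$ holds only up to a harmless factor $2$.

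Finally, $\rH^1_{\Gamma_{\bu}}(\Omega)\subset\rL^6(\Omega)$ for $d\le 3$, so $\Psi=\rL^6(\Omega)$ as sets. Hence $\vdiv\cS_c(\ell)=\ell$ in $\rL^{\frac65}(\Omega)$ gives \eqref{S:prop} for every $\psi\in\Psi$ directly, with no density step.
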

\begin{proof}
We   define $\cS_c$ as a bounded linear operator from two different spaces, i.e. \(\cS_c\in\cL\bigl(\alpha^{-\frac12}(\rH^1_{\Gamma_{\bu}}(\Omega))',\alpha^{-\frac12}\bL^2(\Omega)\bigr),\) and \(\cS_c\in\cL\bigl((\rL^6(\Omega))',\bH_{\Gamma_p}(\vdiv_{\frac65},\Omega)\bigr),\) and then call upon the duality relation between sum and intersection between Banach spaces, i.e. \(\Psi'=[\alpha^{\frac12}\rH^1_{\Gamma_{\bu}}(\Omega)+\rL^6(\Omega)]'=\alpha^{-\frac12}(\rH^1_{\Gamma_{\bu}}(\Omega))'\cap(\rL^6(\Omega))'\) (see, for instance,  \eqref{eq:duality}, with \(X=\alpha^{\frac12}\rH^1_{\Gamma_{\bu}}(\Omega), Y=\rL^6(\Omega),\) and \(Z\) being the null space), to conclude that \( \cS_c \in {\cL}(\Psi', \bW) \).

The existence of \(\cS_c\in\cL\bigl((\rH^1_{\Gamma_{\bu}}(\Omega))',\bL^2(\Omega)\bigr)\)
follows directly from \cite[Lemma~3.2]{dhpr_sinum26}. Moreover, a scaling by \(\alpha^{-\frac12}\) does not compromise the \(\alpha\)-robustness of the boundedness constant of \(\cS_c.\) 

For the existence of \(\cS_c\in\cL\bigl((\rL^6(\Omega))',\bH_{\Gamma_p}(\vdiv_{\frac65},\Omega)\bigr)\) with \(\ell\in\bigl(\rL^6(\Omega)\bigr)',\) let \(\vartheta \in\rH^{1}_{\Gamma_p}(\Omega)\) be the unique solution of the following elliptic problem with mixed boundary conditions {and homogeneous data} (owing to, e.g., \cite[Chapter~7]{maz_i_a2010elliptic})
\begin{align*}
    \Delta \vartheta=\ell \quad \text{in }\Omega, \quad \nabla \vartheta \cdot \bn = 0\quad \text{in }\Gamma_{\bu}, \quad \vartheta=0 \quad \text{in } \Gamma_p.
\end{align*}
Next, setting \(\tilde{\boldsymbol{\vartheta}} = -\nabla \vartheta\in\rL^{2}(\Omega),\) it is evident that \[\vdiv(\tilde{\boldsymbol{\vartheta}})=\Delta \vartheta=\ell\in\rL^{\frac65}(\Omega), \quad \tilde{\boldsymbol{\vartheta}}\cdot \bn = 0, \quad \text{and}\quad\|\tilde{\boldsymbol{\vartheta}}\|_{\vdiv_{\frac65},\Omega}\leq \|\ell\|_{0,\frac65,\Omega}.\]
Then it suffices to define \(\cS_c(\ell) =\tilde{\boldsymbol{\vartheta}},\) which further implies that \(\cS_c\in\cL\bigl((\rL^6(\Omega))',\bH_{\Gamma_p}(\vdiv_{\frac65},\Omega)\bigr)\). 
\end{proof}

Next, for a fixed advecting velocity \(\hat{\bu}\in\bV\), let us consider the following ADR system in its weak form
\begin{subequations}\label{eq:weak.hat.phi}
    \begin{align}
    a_2(\bzeta,\bw) + b_2(\bw,\varphi) + d_2^{\hat\bu}(\bw, \varphi) & = 0 \quad \forall \bw \in \bW, \label{eq:weak,3.hat.phi}\\
    b_2(\bzeta, \psi) - c_2(\varphi,\psi) & = H(\psi) \quad \forall \psi \in \Psi, \label{eq:weak,4.hat.phi}
    \end{align}
\end{subequations}
and we state the following continuous stability result.
\begin{theorem}[Well-posedness of decoupled advection-diffusion-reaction equations]\label{th:continuous-ARD}
 Assume that   
 $m \in \rL^{\frac65}(\Omega)$. Then, for a fixed $\hat{\bu}\in \bV,$ there {exist} positive constants \(C_c, C^c_{\alpha}\) (depending on $\alpha$), such that, whenever {\(\|\hat\bu\|_{{0,3,\Omega}}\leq  \frac{C_c}{2C^c_{\alpha}},\)} there exists a unique  solution $(\bzeta,\varphi)\in \bW\times\Psi$ to the problem \eqref{eq:weak,3.hat.phi}-\eqref{eq:weak,4.hat.phi}.  Furthermore, there is a positive constant $C^{\mathsf{ADR}}_{c}$, independent of $\varrho_0,$ such that 
\begin{equation}\label{eq:bound.phi.psi}
 \tnorm{(\bzeta, \varphi)}_{\bW\times\Psi}  \leq C^{\mathsf{ADR}}_{c} \| m \|_{0,\frac65,\Omega} .
    \end{equation}
\end{theorem}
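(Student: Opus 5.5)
The plan is to treat \eqref{eq:weak,3.hat.phi}--\eqref{eq:weak,4.hat.phi} as a perturbation of a problem that fits Theorem~\ref{th:abstract}. We take $\bX=\bW$, $Y=\Psi$, $a=a_2$, $b=b_2$, $t^2c=c_2$ (so $t|\psi|_c=\varrho_0^{1/2}\|\psi\|_{0,\Omega}$), and regard the advective term $d_2^{\hat\bu}$ as a perturbation to be absorbed afterwards. Denote by $\cA_0$ the form \eqref{eq:curl.A} for these choices, and by $\cA_{\hat\bu}=\cA_0+d_2^{\hat\bu}(\btau,\varphi)$ the full form.

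\textbf{Hypotheses of Theorem~\ref{th:abstract}.}
\begin{itemize}[leftmargin=*]
\item \emph{Boundedness.} For $a_2$ we have $a_2(\bzeta,\bw)\le \alpha^{-1/2}\|\bzeta\|_{0,\Omega}\,\alpha^{-1/2}\|\bw\|_{0,\Omega}$. For $b_2$ we split $\psi=(\psi-s)+s$ with $s\in\alpha^{1/2}\rH^1_{\Gamma_{\bu}}(\Omega)$. Then $\int(\psi-s)\vdiv\bw\le\|\psi-s\|_{0,6,\Omega}\|\vdiv\bw\|_{0,\frac65,\Omega}$. For the other piece we integrate by parts, using the boundary conditions on $\Gamma_{\bu}$ and $\Gamma_p$, to get $\int s\vdiv\bw=-\int\nabla s\cdot\bw\le \alpha^{1/2}\|\nabla s\|_{0,\Omega}\,\alpha^{-1/2}\|\bw\|_{0,\Omega}$. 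Taking the infimum over $s$ gives $\|b_2\|\le 1$.
\item \emph{Symmetry and semi-definiteness} of $a_2$ and $c_2$ are immediate.
\item \emph{Inf-sup for $b_2$.} Given $\psi$, pick $\ell\in\Psi'$ norming $\psi$ and set $\btau=\cS_c(\ell)$ from Lemma~\ref{lem:inf-sup:ARD}. Then $b_2(\btau,\psi)=\langle\ell,\psi\rangle=\|\psi\|_\Psi$ and $\|\btau\|_{\bW}\le\|\cS_c\|$. This gives $C_b=\|\cS_c\|^{-1}$, independent of $\alpha$ and $\varrho_0$.
\item \emph{Coercivity of $a_2$ on the kernel.} For $\bzeta\in\ker(\bB)$ we have $\vdiv\bzeta=0$, since $\Psi\supset\rL^6(\Omega)$ is dense in $\rL^{6}$-duality. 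Hence $\|\bzeta\|_{\bW}=\alpha^{-1/2}\|\bzeta\|_{0,\Omega}$ and $a_2(\bzeta,\bzeta)=\|\bzeta\|_{\bW}^2$.
\item \emph{Braess condition.} Given $\bzeta$, test with $\btau=\bzeta$ and with $v=\mathrm{sgn}(\vdiv\bzeta)|\vdiv\bzeta|^{1/5}$. This $v$ lies in $\rL^6(\Omega)$, with $\|v\|_{0,6,\Omega}=\|\vdiv\bzeta\|_{0,\frac65,\Omega}^{1/5}$ and $b_2(\bzeta,v)=\|\vdiv\bzeta\|_{0,\frac65,\Omega}^{6/5}$. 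After normalising $v$, the sum $a_2(\bzeta,\bzeta)+b_2(\bzeta,v)$ controls $\|\bzeta\|_{\bW}$ times the test norm.
\end{itemize}
The last item is where care is needed. The $t$-part of the test norm is $\varrho_0^{1/2}\|v\|_{0,\Omega}\le\varrho_0^{1/2}|\Omega|^{1/3}\|v\|_{0,6,\Omega}$, which is not uniform in $\varrho_0$. I would handle this by normalising $c_2$ by $\max\{1,\varrho_0\}$, so that $t\le1$ and $\|c\|$ is bounded, or by accepting the domain-dependent constant. Either way the constant stays independent of $\alpha$ and does not blow up as $\varrho_0\to0$. Theorem~\ref{th:abstract} then gives
\[
\tnorm{(\bzeta,\varphi)}_{\bW\times\Psi}\le C\sup_{(\btau,v)}\frac{\cA_0(\bzeta,\varphi;\btau,v)}{\tnorm{(\btau,v)}_{\bW\times\Psi}},
\]
with $C=:C_c^{-1}$ independent of $\varrho_0$ (and of $\alpha$, up to the caveats above).

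\textbf{Perturbation by the advective term.} We estimate
\[
|d_2^{\hat\bu}(\bw,\psi)|\le\alpha^{-1}\|\hat\bu\|_{0,3,\Omega}\|\bw\|_{0,\Omega}\|\psi\|_{0,6,\Omega}.
\]
The factor $\|\psi\|_{0,6,\Omega}$ is not directly bounded by $\|\psi\|_\Psi$, because the $\rH^1$ part of the sum norm must be embedded into $\rL^6$ (via $C_{\mathsf{emb}}$ for $d\le3$) at the price of a factor $\alpha^{-1/2}$. This yields $|d_2^{\hat\bu}(\bw,\psi)|\le C^c_\alpha\|\hat\bu\|_{0,3,\Omega}\|\bw\|_{\bW}\|\psi\|_\Psi$, with $C^c_\alpha\sim\alpha^{-1/2}\max\{1,\alpha^{-1/2}\}C_{\mathsf{emb}}$. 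This is exactly why the statement allows the smallness threshold to depend on $\alpha$.

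\textbf{Conclusion, and the main obstacle.} Under $\|\hat\bu\|_{0,3,\Omega}\le C_c/(2C^c_\alpha)$, the full form satisfies
\[
\sup_{(\btau,v)}\frac{\cA_{\hat\bu}(\bzeta,\varphi;\btau,v)}{\tnorm{(\btau,v)}}\ \ge\ \tfrac{C_c}{2}\,\tnorm{(\bzeta,\varphi)}.
\]
This is BNB~1 for the perturbed problem. BNB~2 is the analogous inf-sup estimate for the adjoint form, obtained by the same argument with $d_2^{\hat\bu}$ transposed. The Banach–Nečas–Babuška theorem then gives existence and uniqueness, together with \eqref{eq:bound.phi.psi} with $C_c^{\mathsf{ADR}}=2/C_c$ times $\|H\|_{\Psi'}\le\|m\|_{0,\frac65,\Omega}$. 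The main obstacle I expect is the Braess condition with a $\varrho_0$-uniform constant in this non-Hilbert sum-space setting.
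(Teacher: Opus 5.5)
Your proposal is essentially the paper's own proof: you verify the hypotheses of Theorem~\ref{th:abstract} for the reaction--diffusion part (same $\cS_c$-based inf-sup, same Braess test function $\mathrm{sgn}(\vdiv\bzeta)|\vdiv\bzeta|^{1/5}$) and then absorb $d_2^{\hat\bu}$ under the smallness of $\|\hat\bu\|_{0,3,\Omega}$. Two remarks on details:
\begin{enumerate}
\item Your Braess step, like the paper's, needs an upper bound on $\varrho_0$ (the paper takes $\varrho_0\le 1$). Your rescaling by $\max\{1,\varrho_0\}$ changes nothing, because $t|v|_c=c_2(v,v)^{1/2}$ however you split $c_2=t^2c$.
\item Your treatment of BNB~2 through the symmetric adjoint is more explicit than the paper's.
\end{enumerate}
One slip: $\|H\|_{\Psi'}\le\|m\|_{0,\frac65,\Omega}$ is false for small $\alpha$, since every $\psi\in\rH^1_{\Gamma_{\bu}}(\Omega)$ has $\|\psi\|_{\Psi}\le\alpha^{1/2}\|\nabla\psi\|_{0,\Omega}$. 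The embedding you already used for $d_2^{\hat\bu}$ only gives $\|H\|_{\Psi'}\le(1+C_{\mathsf{emb}}\alpha^{-1/2})\|m\|_{0,\frac65,\Omega}$. This is exactly the factor in the paper's $C^{\mathsf{ADR}}_c=2C_c^{-1}(1+C_{\mathsf{emb}}\alpha^{-1/2})$, and it leaves the $\varrho_0$-independence intact.
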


\begin{proof} 
We first use Theorem~\ref{th:abstract} to show the well-posedness of the following reaction-diffusion system, which is identical to \eqref{eq:weak.hat.phi} without the advection part, i.e., one seeks \((\bzeta,\varphi)\in\bW\times \Psi\) such that
\begin{subequations}\label{eq:weak.form.reac-diff}
\begin{align}
    a_2(\bzeta,\bw) + b_2(\bw,\varphi) & = 0 \quad \forall \bw \in \bW, \label{eq:weak,3_1}\\
    b_2(\bzeta, \psi) - c_2(\varphi,\psi) & = H(\psi) \quad \forall \psi \in \Psi. \label{eq:weak,4_1}
\end{align}
\end{subequations}

\smallskip
\noindent\textbf{Boundedness of \(a_2(\bullet,\bullet),\,b_2(\bullet,\bullet)\) and \(c_2(\bullet,\bullet)\).} Observe that a use of  the {Cauchy--Schwarz} inequality gives the
 boundedness of the bilinear forms \(a_2(\bullet,\bullet)\) and \(c_2(\bullet,\bullet)\) as
 \begin{align*}
a_2(\bzeta,\btau)&=\int_{\Omega}\alpha^{-1}\bzeta\cdot\btau\leq\alpha^{-\frac12}\|\bzeta\|_{0,\Omega}\,\alpha^{-\frac12}\|\btau\|_{0,\Omega} \leq\|\bzeta\|_{\bW}\|\btau\|_{\bW}\quad \forall \bzeta,\btau\in \bW,\\
c_2(\varphi,\psi)&=\int_{\Omega}{\varrho_0}\varphi\psi\leq\|\varphi\|_{0,\Omega}\|\psi\|_{0,\Omega}\leq \|\varphi\|_{\Psi_c}\|\psi\|_{\Psi_c}\qquad \forall \varphi, \psi \in \Psi_c,
\end{align*}
In order to show the boundedness of \(b_2(\bullet,\bullet)\), we follow a decomposition argument over \(\Psi.\) Let \(\psi=\psi_0+\psi_1,\) with \(\psi_0\in\rL^6(\Omega)\) and \(\psi_1\in\rH^1_{\Gamma_{\bu}}(\Omega).\) Then, 
\(b_2(\btau,\psi)=b_2(\btau,\psi_0+\psi_1)=\langle\vdiv\btau,\psi_0\rangle-(\alpha^{-\frac12}\btau,\alpha^{\frac12}\nabla\psi_1).\)
    Next, a use of the H\"{o}lder's  with  the {Cauchy--Schwarz} inequality, {the fact that $a^2+b^2\leq(a+b)^2,$\, for any two  {nonnegative} real numbers $a,b,$} and then taking the infimum over all decompositions of \(\psi\in\Psi\) give the desired boundedness as follows
    \begin{align*}
   b_2&(\btau,\psi) \leq \|\vdiv\btau\|_{0,\frac65,\Omega}\|\psi_0\|_{0,6,\Omega}+\|\alpha^{-\frac12}\btau\|_{0,\Omega}\|\alpha^{\frac12}\nabla\psi_1\|_{0,\Omega}\\
    &\leq\bigl(\|\vdiv\btau\|^2_{0,\frac65,\Omega}+\|\alpha^{-\frac12}\btau\|^2_{0,\Omega}\bigr)^{\frac12}\bigl(\|\psi_0\|^2_{0,6,\Omega}+\|\alpha^{\frac12}\nabla\psi_1\|^2_{0,\Omega}\bigr)^{\frac12} \\
    &\leq\bigl(\|\vdiv\btau\|_{0,\frac65,\Omega}+\|\alpha^{-\frac12}\btau\|_{0,\Omega}\bigr)\bigl(\|\psi_0\|_{0,6,\Omega}+\|\alpha^{\frac12}\nabla\psi_1\|_{0,\Omega}\bigr)=\|\btau\|_{\bW}\|\psi\|_{\Psi}\quad {\forall \btau \in \bW, \psi \in \Psi.}
\end{align*}

\smallskip
\noindent\textbf{Symmetry and positive-semidefiniteness of \(a_2(\bullet,\bullet)\) and \(c_2(\bullet,\bullet)\).} Clearly, \(a_2(\bullet,\bullet)\) and \(c_2(\bullet,\bullet)\) are symmetric, as well as positive-semidefinte bilinear forms, as  \[a_2(\bzeta,\bzeta)={\alpha^{-1}\int_\Omega \bzeta\cdot\bzeta} \geq0 \quad \text{and}\quad  c_2(\varphi,\varphi)= \varrho_0\int_{\Omega}\varphi^2\geq0 \qquad \forall \bzeta\in\bW, \varphi\in\Psi_c.\] 

\smallskip
\noindent\textbf{Inf-sup of \(a_2(\bullet,\bullet)\) on $\bZ$.} Choosing $\btau = \bzeta$ and using the kernel characterisation with definition of the $\bW$-norm, we arrive at  
\[ \sup_{\btau \in \bZ\setminus\{\boldsymbol{0}\}} \frac{a_2(\bzeta,\btau)}{\|\btau\|_{\bW}} {{\geq \alpha^{-1}\frac{\int_\Omega \bzeta\cdot\bzeta}{\|\bzeta\|_{\bW}}} =}  C_{a_2}\| \bzeta\|_{\bW} \quad \forall \bzeta \in \bZ.\]
Then, this readily 
satisfies the second hypothesis of Theorem~\ref{th:abstract} with the constant~\(C_{a_2}=1\).

\smallskip
\noindent\textbf{Inf-sup stability of \(b_2(\bullet,\bullet)\).} 
From the existence of $\cS_c$ defined in Lemma~\ref{lem:inf-sup:ARD}, it immediately follows that
\begin{align*}
\sup_{\bzero\neq\btau \in \bW} \!\!\frac{\int_\Omega \psi\,\vdiv \btau}{\|\btau\|_\bW} \geq \sup_{0\neq\ell \in \Psi'} \frac{{\langle{\vdiv}_{\frac65} [\cS_c(\ell)], \psi\rangle}}{\|\cS_c(\ell)\|_\bW} = \sup_{0\neq\ell \in \Psi'} \frac{\langle \ell, \psi \rangle}{\|\cS_c(\ell)\|_\bW} \geq \|\cS_c\|^{-1}_{\cL(\Psi', \bW)} \sup_{0\neq\ell \in \Psi'} \frac{\langle \ell, \psi \rangle}{\|\ell\|_{\Psi'}} = C_{b_2} \|\psi\|_{\Psi},
\end{align*}
where $C_{b_2} = \|\cS_c\|^{-1}_{\cL(\Psi', \bW)}$ is independent of \(\alpha,\varrho_0\), and hence, the inf-sup condition is satisfied.

\smallskip
\noindent\textbf{Braess condition.} Now, fixing  \(\btau=\bzeta,\psi=0\), and observing that \(\tnorm{(\bzeta,0)}_{\bW\times \Psi}=\|\bzeta\|_{\bW}\), we arrive at  
\begin{align}\label{eq:Braess.1st}
\sup_{(\bzero,0)\neq(\btau,\psi)\in\bW\times\Psi}\frac{a_2(\bzeta,\btau)+b_2(\bzeta,\psi)}{\tnorm{(\btau,\psi)}_{\bW\times\Psi}} \geq \alpha^{-1}\frac{\int_\Omega \bzeta\cdot \bzeta}{\tnorm{(\bzeta,0)}_{\bW\times\Psi}}=\alpha^{-1}\frac{\|\bzeta\|^2_{0,\Omega}}{\|\bzeta\|_{\bW}}.
\end{align}
 Next, taking \(\btau = \bzero,\) and \(\psi=\mathrm{sgn}(\vdiv\bzeta)|\vdiv\bzeta|^{\frac15}{\in\rL^{\frac65}(\Omega)}\), 
we evaluate the following quantities 
\begin{align*}
   b_2(\boldsymbol{\zeta}, \psi)
  &= \int_{\Omega} |\operatorname{div} \boldsymbol{\zeta}|^{\frac{6}{5}}
  = \|\operatorname{div} \boldsymbol{\zeta}\|_{0,\frac{6}{5},\Omega}^{\frac{6}{5}}, 
  \quad \quad \|\psi\|^6_{0,\Omega}
  = \int_\Omega|\vdiv\bzeta|^{\frac65}=\|\operatorname{div} \boldsymbol{\zeta}\|^{\frac65}_{0,\frac65,\Omega},
  \\
    \tnorm{(\btau, \psi)}_{\boldsymbol{W} \times \Psi}
  &= \tnorm{(\boldsymbol{0}, \psi)}_{\boldsymbol{W} \times \Psi}
  {\leq} \|\psi\|_{\Psi} + {\varrho_0}\|\psi\|_{0,6,\Omega}
  \leq {2}\|\psi\|_{0,6,\Omega},  
\end{align*}
where {the last inequality follows from \eqref{eq:Psi.norm}, and by using $\varrho_0\leq 1$}. 
 Furthermore using {the {above} expressions} for \(b_2(\boldsymbol{\zeta}, \psi),\,\|\psi\|_{0,\Omega}\) and \(\tnorm{(\btau, \psi)}_{\boldsymbol{W} \times \Psi}\), we immediately obtain 
\begin{align}\label{eq:Braess.2nd} \sup_{(\bzero,0)\neq(\btau,\psi)\in\bW\times\Psi}\frac{a_2(\bzeta,\btau)+b_2(\bzeta,\psi)}{\tnorm{(\btau,\psi)}_{\bW\times\Psi}} \geq{\frac12} \frac{\|\vdiv \bzeta\|^{\frac65}_{0,\frac65,\Omega}}{\|\vdiv \bzeta\|^{\frac15}_{0,\frac65,\Omega}}={\frac12}\|\vdiv \bzeta\|_{0,\frac65,\Omega}.
\end{align}
Now, combining \eqref{eq:Braess.1st} and \eqref{eq:Braess.2nd}, we arrive at 
\begin{align}\label{eq:Braess.3rd}
\sup_{(\bzero,0)\neq(\btau,\psi)\in\bW\times\Psi}\frac{a_2(\bzeta,\btau)+b_2(\bzeta,\psi)}{\tnorm{(\btau,\psi)}_{\bW\times\Psi}}\geq\max\bigl(\alpha^{-1}{\frac{\|\bzeta\|^2_{0,\Omega}}{\|\bzeta\|_{\bW}},{\frac12}\|\vdiv \bzeta\|_{0,\frac65,\Omega}}\bigr).
\end{align}
Set \(s:=\alpha^{-\frac12}\frac{\|\bzeta\|_{0,\Omega}}{\|\bzeta\|_{\bW}}\in[0,1)\) and use \eqref{eq:Braess.3rd} to derive the following  relation 
\begin{align*}\max\bigl({\alpha^{-1}\frac{\|\bzeta\|^2_{0,\Omega}}{\|\bzeta\|_{\bW}},\|\vdiv \bzeta\|_{0,\frac65,\Omega}}\bigr)=\bigl(\max_{s\in[0,1)}\bigl[s^2,{\frac12}\sqrt{1-s^2}\bigr]\bigr) \|\bzeta\|_{\bW}.\end{align*}
After denoting \[\gamma_c:=\bigl(\max_{s\in[0,1)}\bigl[s^2,{\frac12}\sqrt{1-s^2}\bigr]\bigr)<1,\] we can conclude from \eqref{eq:Braess.3rd} that \[\sup_{(\bzero,0)\neq(\btau,\psi)\in\bW\times\Psi}\frac{a_2(\bzeta,\btau)+b_2(\bzeta,\psi)}{\tnorm{(\btau,\psi)}_{\bW\times\Psi}}\geq \gamma_c \|\bzeta\|_{\bW}.\]
An appeal to Theorem~\ref{th:abstract} now yields the well-posedness of \eqref{eq:weak.form.reac-diff}, and additionally, we obtain the following inf-sup type estimate for $\cA$  
\begin{align}\label{eq:A.stab}
\sup_{(\bzero,0)\neq(\tau,\psi)\in\bW\times\Psi}\frac{\cA(\bzeta,\varphi;\btau,\psi)}{\tnorm{(\btau,\psi)}_{\bW\times \Psi}}\geq C_c\tnorm{(\bzeta,\varphi)}_{\bW\times \Psi},
\end{align}
where \(C_c\) depends on \(\|a_2\|,C_{b_2},\gamma_c\) {(see \eqref{eq:C} for the explicit expression of \(C_c\))}.

Next, in order to show the well-posedness of the uncoupled ADR equations, for a fixed \(\hat{\bu}\in\bV,\) we consider another bilinear form \(\mathcal{A}^{\mathsf{ADR}}:\bW\times\Psi\to \mathbb{R}\) defined by 
\begin{align}\label{eq:A:ARD}\mathcal{A}^{\mathsf{ADR}}(\bzeta,\varphi;\btau,\psi):=\mathcal{A}(\bzeta,\varphi;\btau,\psi)+d_2^{\hat{\bu}}(\btau,\varphi),\quad \forall (\bzeta,\varphi), (\btau,\psi)\in\bW\times \Psi.\end{align}
Thus,  showing the well-posedness of the system~\eqref{eq:weak,3}-\eqref{eq:weak,4} boils down to analysing the problem of finding \((\bzeta,\varphi)\in\bW\times \Psi\) satisfying \[\mathcal{A}^{\mathsf{ADR}}(\bzeta,\varphi;\btau,\psi)=H(\psi),\quad \forall \psi\in \Psi.\]

Note that, by the generalised Hölder's inequality with {1/3+1/2+1/6=1}, and the uniform positivity and boundedness of $\kappa,$ the following estimate readily follows:\begin{align}\label{eq:d2.bound}
d_2^{\hat{\bu}}(\btau,\varphi)\leq C^c_{\alpha}\|\hat{\bu}\|_{0,3,\Omega}\,\tnorm{(\btau,\psi)}_{\bW\times\Psi}\,\tnorm{(\bzeta,\varphi)}_{\bW\times\Psi},\end{align} where {$C^c_{\alpha}:={{\alpha^{-\frac12}}}(1+C_{\mathsf{emb}}\alpha^{-\frac12}).$}  
Invoking the bound of \(d_2^{\hat{\bu}}(\bullet,\bullet)\) (cf. \eqref{eq:d2.bound}), it follows from \eqref{eq:A.stab} and \eqref{eq:A:ARD} that
\[\sup_{(\bzero,0)\neq(\btau,\psi)\in {\bW\times\Psi}}\frac{\mathcal{A}^{\mathsf{ADR}}(\bzeta,\varphi;\btau,\psi)}{\tnorm{(\btau,\psi)}_{\bW\times\Psi}}\geq\bigl[C_c-C^c_{\alpha}{\|\hat{\bu}\|_{0,3,\Omega}}\bigr]\tnorm{(\bzeta,\varphi)}_ {\bW\times\Psi}, \qquad  \forall(\bzeta,\varphi)\in \bW\times \Psi.\]
Hence, under the assumption that { for each $\hat{\bu} \in \bV$ such that  {$\|\hat{\bu}\|_{{0,3,\Omega}}\leq \frac{C_c}{2C^c_{\alpha}}$}, we arrive at 
\begin{equation}\label{eq:ARD}
\sup_{(\bzero,0)\neq(\btau,\psi)\in {\bW\times\Psi}}\frac{\mathcal{A}^{\mathsf{ADR}}(\bzeta,\varphi;\btau,\psi)}{\tnorm{(\btau,\psi)}_ {\bW\times\Psi}}\geq\frac{C_c}{2} \tnorm{(\bzeta,\varphi)}_ {\bW\times\Psi}.
\end{equation}
Therefore,} the ADR system (cf. \eqref{eq:weak,3}-\eqref{eq:weak,4}) is well-posed. 
Additionally, using \eqref{eq:A:ARD} in \eqref{eq:ARD}, and using the H\"{o}lder's inequality to see that \(H(\psi)\leq\|m\|_{0,\frac65,\Omega}\|\psi\|_{0,6,\Omega},\) we obtain the continuous dependence on the data as follows: \begin{align}\label{eq:C.c.ard}
\tnorm{(\bzeta,\varphi)}_{\bW\times \Psi}\leq C^{\mathsf{ADR}}_{c}\|m\|_{0,\frac65,\Omega},
\end{align} with {$C^{\mathsf{ADR}}_{c}:={2}{C_c}^{-1}(1+C_{\mathsf{emb}}\alpha^{-\frac12})$} being independent of~\(\varrho_0\). This concludes the rest of the proof.
\end{proof}

\subsection{Fixed-point analysis}
Now, define the following  map
\[ \cS^{\mathsf{DF}}:\Psi\to \bV \times \rQ, \quad \hat{\varphi} \mapsto \cS^{\mathsf{DF}}(\hat{\varphi})= (\cS^{\mathsf{DF}}_1(\hat{\varphi}), \cS^{\mathsf{DF}}_2(\hat{\varphi})):= (\bu, p), \]
where $( \bu,p) \in \bV \times \rQ$ is the unique solution of the Darcy--Forchheimer equations {for fixed $\hat\varphi\in \ \Psi$}, as stated in Theorem~\ref{th:continuous-DF}, {and $\cS_1^{\mathsf{DF}}:\Psi\to\bV$,  $\cS_2^{\mathsf{DF}}:\Psi\to\rQ.$}
Further, {for a fixed $\hat\bu\in\bV,$} set the solution operator associated with the mixed ADR equations as
\[ \cS^{\mathsf{ADR}}: \bV \to  \bW\times \Psi, \quad \hat{\bu} \mapsto \cS^{\mathsf{ADR}}(\hat{\bu})=(\cS^{\mathsf{ADR}}_1(\hat{\bu}),\cS^{\mathsf{ADR}}_2(\hat{\bu})):= ( \bzeta, \varphi),\]
where $(\bzeta, \varphi)$ is the unique solution of the ADR equations, as stated in Theorem~\ref{th:continuous-ARD}, {and $\cS^{\mathsf{ADR}}_1:\bV\to\bW, \cS^{\mathsf{ADR}}_2:\bV\to\Psi$}. These maps are well-defined; the same holds for the following one
\begin{equation}\label{Operator-S.V->V}
\cS: \bV  \to  \bV, \quad \hat{\bu} \mapsto \cS(\hat{\bu}):= \cS^{\mathsf{DF}}_1(\cS^{\mathsf{ADR}}_2(\hat{\bu})).
\end{equation}
Finding a fixed point $\bu$ of $\cS$ is  therefore  equivalent to find a solution to   \eqref{eq:weak}. We use the Banach contraction mapping theorem, and consider, for a generic $r>0$, the following closed ball  
\[ \mathrm{B}_{\bzero}^r : = \{ \hat{\bu} \in \bV:
  \|\hat{\bu}\|_{\bV}  \leq r \},\]
and proceed next to show that $\cS$ maps it to itself and that $\cS$ is a contraction. For \(\bu\in\mathrm{B}_{\bzero}^r\), using  Theorem~\ref{th:continuous-DF}, \eqref{eq:u.estimate}, \eqref{eq:lip.bound.f}, and Theorem~\ref{th:continuous-ARD} shows that \begin{align}\label{eq:S.bound}
\|\cS(\bu)\|_{\bV}=\|\cS_1^{\mathsf{DF}}(\cS_2^{\mathsf{ADR}}(\bu))\|_{\bV}\leq {C^{\mathsf{DF}}_{c,\bu}}\bigl[\|\bf(\cS_2^{\mathsf{ADR}}(\bu))\|_{0,\Omega}\bigr]\leq {C^{\mathsf{DF}}_{c,\bu}}\bigl[{\|\bf(0)\|_{0,\Omega}}+C^{\mathsf{ADR}}_{c}\alpha_1\|m\|_{0,\frac65,\Omega}\bigr].
\end{align}
 
\begin{lemma}[Ball mapping property]\label{lemma:ball}
Under the small data assumption 
\begin{equation}\label{data-bounded-map-closed}
{C^{\mathsf{DF}}_{c,\bu}}\bigl[{\|\bf(0)\|_{0,\Omega}}+C^{\mathsf{ADR}}_{c}\alpha_1\|m\|_{0,\frac65,\Omega}\bigr] \leq r<\frac12,
\end{equation}
we have that $\cS\big(\mathrm{B}_{\bzero}^r\big)\subseteq\mathrm{B}_{\bzero}^r$.
\end{lemma}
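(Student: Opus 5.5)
The plan is to take an arbitrary $\hat{\bu}\in\mathrm{B}_{\bzero}^r$ and show that $\cS(\hat{\bu})$ is well defined and satisfies $\|\cS(\hat{\bu})\|_{\bV}\leq r$. The composition $\cS=\cS^{\mathsf{DF}}_1\circ\cS^{\mathsf{ADR}}_2$ is defined only when Theorem~\ref{th:continuous-ARD} applies. That theorem needs $\|\hat\bu\|_{0,3,\Omega}\leq \frac{C_c}{2C^c_{\alpha}}$. So the first step is to pass from $\|\hat\bu\|_{\bV}\leq r$ to this $\rL^3$ smallness. By the definition of the $\bV$-norm in \eqref{eq:norms}, $\rF^{\frac13}\|\hat\bu\|_{0,3,\Omega}\leq\|\hat\bu\|_{\bV}\leq r$, hence $\|\hat\bu\|_{0,3,\Omega}\leq \rF^{-\frac13}r<\frac12\rF^{-\frac13}$.

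I expect this to be the main obstacle, because the smallness threshold of Theorem~\ref{th:continuous-ARD} depends on $\alpha$ and the bound involves $\rF$. I would handle it in the same spirit as the rest of the section. Either the restriction $r<\frac12$ is read together with the normalisation of the constants so that $\frac12\rF^{-\frac13}\leq \frac{C_c}{2C^c_{\alpha}}$, or, if needed, I would strengthen the hypothesis implicitly to $r\leq \rF^{\frac13}\frac{C_c}{2C^c_{\alpha}}$. In either case the conclusion is that every $\hat\bu$ in the ball lies in the admissible set of Theorem~\ref{th:continuous-ARD}. This yields a unique $(\bzeta,\varphi)=\cS^{\mathsf{ADR}}(\hat\bu)$ obeying \eqref{eq:bound.phi.psi}.

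Next I need $\bf(\varphi)\in\bL^2(\Omega)$, so that Theorem~\ref{th:continuous-DF} produces $\cS^{\mathsf{DF}}(\varphi)$. This follows from \eqref{eq:lip.bound.f} once $\|\varphi\|_{0,6,\Omega}$ is controlled. I would bound $\|\varphi\|_{0,6,\Omega}$ by $\tnorm{(\bzeta,\varphi)}_{\bW\times\Psi}\leq C^{\mathsf{ADR}}_c\|m\|_{0,\frac65,\Omega}$, which implicitly identifies $\|\varphi\|_{0,6,\Omega}$ with the $\Psi$-norm contribution. That identification holds up to an embedding constant that can be absorbed into $C^{\mathsf{ADR}}_c$, as the paper already does in \eqref{eq:S.bound}.

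Finally I would combine \eqref{eq:u.estimate}, \eqref{eq:lip.bound.f} and \eqref{eq:bound.phi.psi} exactly as in the chain \eqref{eq:S.bound}:
\[
\|\cS(\hat\bu)\|_{\bV}\leq C^{\mathsf{DF}}_{c,\bu}\|\bf(\varphi)\|_{0,\Omega}\leq C^{\mathsf{DF}}_{c,\bu}\bigl[\|\bf(0)\|_{0,\Omega}+C^{\mathsf{ADR}}_c\alpha_1\|m\|_{0,\frac65,\Omega}\bigr]\leq r,
\]
where the last step is the assumption \eqref{data-bounded-map-closed}. This gives $\cS(\mathrm{B}_{\bzero}^r)\subseteq\mathrm{B}_{\bzero}^r$. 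The strict bound $r<\frac12$ is not needed for this last inequality. Its role is only to secure the admissibility step above, and it will presumably be used again in the later contraction estimate.
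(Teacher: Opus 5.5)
Your argument is essentially the paper's: its proof is exactly the chain \eqref{eq:S.bound} (combining \eqref{eq:u.estimate}, \eqref{eq:lip.bound.f} and \eqref{eq:bound.phi.psi}) closed off by \eqref{data-bounded-map-closed}, with $r<\frac12$ playing no role. You are right to flag the admissibility check, which the paper takes for granted by declaring $\cS^{\mathsf{ADR}}$ well defined on all of $\bV$, but neither of your two fixes follows from \eqref{data-bounded-map-closed}---both are extra restrictions guaranteeing $\rF^{-\frac13}r\leq \frac{C_c}{2C^c_{\alpha}}$---and similarly the passage from $\|\varphi\|_{\Psi}$ to $\|\varphi\|_{0,6,\Omega}$ costs a factor $(1+C_{\mathsf{emb}}\alpha^{-\frac12})$ that both you and the paper silently absorb into $C^{\mathsf{ADR}}_c$.
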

\begin{proof}
  Given $\hat{\bu} \in \mathrm{B}_{\bzero}^r$, a use of  \eqref{Operator-S.V->V} with  \eqref{eq:S.bound} and \eqref{data-bounded-map-closed}  yields
     \(
         \|\cS(\hat{\bu})\|_{\bV}= \|  \cS^{\mathsf{DF}}_1(\cS^{\mathsf{ADR}}_2(\hat{\bu}))\|_{\bV}
         \leq r<\frac12,
     \)  
     and hence, $\cS\big(\mathrm{B}_{\bzero}^r\big)\subseteq\mathrm{B}_{\bzero}^r,$ which concludes the proof.
\end{proof}
 \begin{lemma}[Lipschitz-continuity]\label{lemma:continuity-S} 
 There exists a positive constant $L_{\cS}$ such that 
 \begin{equation}\label{continuity-S}
  \|\cS(\bu_{1})-\cS(\bu_{2})\|_{\bV}
\leq L_{\cS}\|\bu_{1}-\bu_{2}\|_{\bV} \qquad \forall \bu_1,\bu_2 \in \bV.
 \end{equation} 
 \end{lemma}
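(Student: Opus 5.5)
Since $\cS=\cS^{\mathsf{DF}}_1\circ\cS^{\mathsf{ADR}}_2$, the plan is to prove two Lipschitz estimates and chain them: one for $\hat\varphi\mapsto\cS^{\mathsf{DF}}_1(\hat\varphi)$ from $\Psi$ into $\bV$, and one for $\hat\bu\mapsto\cS^{\mathsf{ADR}}_2(\hat\bu)$ from $\bV$ into $\Psi$. Then $L_{\cS}$ is the product of the two constants.

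\textbf{Darcy--Forchheimer step.} Let $(\bu_i,p_i)=\cS^{\mathsf{DF}}(\varphi_i)$ for $i=1,2$. Subtract the two instances of \eqref{eq:weak,1.hat.u}--\eqref{eq:weak,2.hat.u} and test with $\bv=\bu_1-\bu_2$, which lies in the kernel of $b_1$. The $p$-terms then drop out, and we obtain
\[
\kappa^{-1}\|\bu_1-\bu_2\|_{0,\Omega}^2+\rF\int_\Omega(|\bu_1|\bu_1-|\bu_2|\bu_2)\cdot(\bu_1-\bu_2)=\int_\Omega(\bf(\varphi_1)-\bf(\varphi_2))\cdot(\bu_1-\bu_2).
\]
The standard monotonicity inequality $(|\bx|\bx-|\by|\by)\cdot(\bx-\by)\ge c|\bx-\by|^3$ controls $\rF^{\frac13}\|\bu_1-\bu_2\|_{0,3,\Omega}$. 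The divergence part of the $\bV$-norm vanishes on the kernel. Combining these as in the stability proof of Theorem~\ref{th:continuous-DF} (from \cite{dhpr_sinum26}) gives
\[
\|\bu_1-\bu_2\|_{\bV}\lesssim\|\bf(\varphi_1)-\bf(\varphi_2)\|_{0,\Omega}.
\]
For this I need a Lipschitz version of \eqref{eq:lip.bound.f}, namely $\|\bf(\varphi_1)-\bf(\varphi_2)\|_{0,\Omega}\le\alpha_1\|\varphi_1-\varphi_2\|_{0,6,\Omega}$. I will state it as the natural strengthening of \cite[Assumption~2.1]{sayah2021finite}. I also need $\|\cdot\|_{0,6,\Omega}\lesssim\|\cdot\|_\Psi$ with an $\alpha$-dependent constant, which comes from the Sobolev embedding $\rH^1_{\Gamma_{\bu}}\hookrightarrow\rL^6$ with constant $C_{\mathsf{emb}}$.

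\textbf{ADR step.} Let $(\bzeta_i,\varphi_i)=\cS^{\mathsf{ADR}}(\bu_i)$. Subtracting the two problems and using the bilinearity of $d_2$, the difference satisfies
\[
\mathcal{A}^{\mathsf{ADR}}_{\bu_1}(\bzeta_1-\bzeta_2,\varphi_1-\varphi_2;\btau,\psi)=-d_2^{\bu_1-\bu_2}(\btau,\varphi_2).
\]
This is the main obstacle: the left-hand operator must be uniformly stable, and the right-hand side must be controlled. On the left, I invoke the inf-sup bound \eqref{eq:ARD}; this requires $\|\bu_1\|_{0,3,\Omega}\le C_c/(2C^c_\alpha)$, which I would assume (it holds on the ball $\mathrm{B}^r_{\bzero}$ under a suitable smallness of $r$, since $\|\cdot\|_{0,3}\le\rF^{-1/3}\|\cdot\|_{\bV}$). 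On the right, I use \eqref{eq:d2.bound}, which gives a bound of the form $C^c_\alpha\|\bu_1-\bu_2\|_{0,3,\Omega}\tnorm{(\btau,\psi)}\tnorm{(\bzeta_2,\varphi_2)}$, and then \eqref{eq:bound.phi.psi} for $\tnorm{(\bzeta_2,\varphi_2)}$. The result is
\[
\|\varphi_1-\varphi_2\|_\Psi\le\frac{2}{C_c}\,C^c_\alpha\,C^{\mathsf{ADR}}_c\,\rF^{-\frac13}\|m\|_{0,\frac65,\Omega}\|\bu_1-\bu_2\|_{\bV}.
\]

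\textbf{Conclusion.} Combining the two steps gives \eqref{continuity-S} with
\[
L_{\cS}\simeq C^{\mathsf{DF}}_{c,\bu}\,\alpha_1 C_{\mathsf{emb}}\,2C_c^{-1}C^c_\alpha C^{\mathsf{ADR}}_c\rF^{-1/3}\|m\|_{0,\frac65,\Omega}.
\]
This constant is robust in $\kappa$ and $\varrho_0$, but not in $\alpha$ and $\rF$, consistent with the loss of robustness announced in the introduction. The constant is proportional to the data, so that later a small-data hypothesis makes $L_{\cS}<1$ and the contraction argument goes through. Strictly, the statement for all $\bu_1,\bu_2\in\bV$ only needs $\|\bu_1\|_{0,3}$ small. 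Since $\bu_1,\bu_2$ enter symmetrically, I can choose whichever is in the admissible set, and in the application both lie in $\mathrm{B}^r_{\bzero}$.
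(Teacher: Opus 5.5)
Your proposal follows the paper's proof. You chain a Lipschitz bound for $\cS^{\mathsf{DF}}_1$ (monotonicity of $a_1+d_1$ on the kernel of $b_1$, plus a Lipschitz property of $\bf$) with one for $\cS^{\mathsf{ADR}}_2$ (subtract the two ADR problems, split the advection difference, then use inf-sup stability, \eqref{eq:d2.bound}, and the a priori bound \eqref{eq:bound.phi.psi} for $\varphi_2$).

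Your ADR step is a cleaner version of the paper's. You keep $d_2^{\bu_1}(\btau,\varphi_1-\varphi_2)$ on the left and apply \eqref{eq:ARD} directly, which needs only the smallness of $\|\bu_1\|_{0,3,\Omega}$ already required for $\cS^{\mathsf{ADR}}$ to be defined. The paper instead moves an advection term to the right-hand side and absorbs it under the extra condition $C^c_\alpha\rF^{\frac13}>1$.

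The two hypotheses you make explicit are genuinely needed and are glossed over in the paper. First, \eqref{eq:lip.bound.f} is only a growth bound, not a Lipschitz bound. Second, the estimate cannot hold for all $\bu_1,\bu_2\in\bV$, since Theorem~\ref{th:continuous-ARD} only guarantees that $\cS^{\mathsf{ADR}}(\hat\bu)$ is well defined when $\|\hat\bu\|_{0,3,\Omega}\le C_c/(2C^c_\alpha)$.

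The step that does not go through as written is the $\rF^{\frac13}\bL^3$ component of the Darcy--Forchheimer estimate. Set $\delta\bu=\bu_1-\bu_2$ and $\delta\bf=\bf(\varphi_1)-\bf(\varphi_2)$. Your identity and the cubic inequality give $\kappa^{-1}\|\delta\bu\|^2_{0,\Omega}+c\,\rF\|\delta\bu\|^3_{0,3,\Omega}\le\|\delta\bf\|_{0,\Omega}\|\delta\bu\|_{0,\Omega}$. From this you get $\kappa^{-\frac12}\|\delta\bu\|_{0,\Omega}\le\kappa^{\frac12}\|\delta\bf\|_{0,\Omega}$, but only $\rF^{\frac13}\|\delta\bu\|_{0,3,\Omega}\le(\kappa/c)^{\frac13}\|\delta\bf\|^{\frac23}_{0,\Omega}$. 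The latter is a H\"older bound, not a Lipschitz one, as $\delta\bf\to\bzero$. These two bounds alone therefore do not give $\|\delta\bu\|_{\bV}\lesssim\|\delta\bf\|_{0,\Omega}$.

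The paper gets a linear bound by citing a uniform (quadratic) strong monotonicity constant $\alpha_{\mathsf{DF}}$ from \cite[Equation~3.8]{dhpr_sinum26}, so at this point both arguments rest on that reference. Be aware that such a quadratic lower bound cannot come from the Forchheimer term, whose uniform contribution is only cubic in $\|\delta\bu\|_{0,3,\Omega}$. The term $\kappa^{-1}\|\delta\bu\|^2_{0,\Omega}$ does not uniformly dominate $\rF^{\frac23}\|\delta\bu\|^2_{0,3,\Omega}$ either. A fully rigorous linear estimate for the $\bL^3$ part therefore needs an additional ingredient, or the estimate must be stated in a weaker form for that component.
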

\begin{proof}
It is enough to show the Lipschitz-continuity of the maps \(\cS_1^{\mathsf{DF}}\) and \(\cS_2^{\mathsf{ADR}}\), as in that case, using the Lipschitzness of the individual maps, one simply obtains the following 
\begin{align*}\|\cS(\bu_{1})-\cS(\bu_{2})\|_{\bV}&=\|\cS_1^{\mathsf{DF}}(\cS_2^{\mathsf{ADR}}(\bu_1))-\cS_1^{\mathsf{DF}}(\cS_2^{\mathsf{ADR}}(\bu_2))\|_{\bV}\\
&\leq L_{\cS_1}\|\cS_2^{\mathsf{ADR}}(\bu_1)-\cS_2^{\mathsf{ADR}}(\bu_2)\|_{\bV}\leq L_{\cS_1}L_{\cS_2}\|\bu_1-\bu_2\|_{\bV},
\end{align*}
with respective Lipschitz constants, \(L_{\cS_1}\) and \(L_{\cS_2}\) (to be specified below) associated {with} {the maps} \(\cS^{\mathsf{DF}}_1\) and \(\cS^{\mathsf{ADR}}_2\). We now proceed to show the Lipschitzness of \(\cS_1^{\mathsf{DF}}\) and \(\cS_2^{\mathsf{ADR}}\).

\smallskip
\noindent\textbf{Lipschitz-continuity of \(\cS^{\mathsf{DF}}_1\).}
Let \(\varphi_1, \varphi_2\in\Psi\) be two  elements in \(\Psi.\) Fixing \(\varphi_1\) and \(\varphi_2\) results in two  solutions \((\bu_1,p_1), (\bu_2,p_2) \in \bV\times \rQ\) corresponding to   \eqref{eq:weak,1}-\eqref{eq:weak,2}. Now, denote \(\cS^{\mathsf{DF}}_1(\varphi_1):=\bu_1\) and \(\cS^{\mathsf{DF}}_1(\varphi_2):=\bu_2.\) Substituting \(\bv=\bu_1-\bu_2,q=p_1-p_2\) into \eqref{eq:weak,1}, \eqref{eq:weak,2}, {using \eqref{eq:lip.bound.f}, and} an application of  the uniformly strong monotonicity of \(a_1(\bullet,\bullet)+d_1(\bullet;\bullet,\bullet)\) in the new equation obtained by subtracting \eqref{eq:weak,2} from \eqref{eq:weak,1} implies
\begin{align}\label{eq:Lipschitz.S_DF}
    \|\cS_1^{\mathsf{DF}}({\varphi_1})-\cS_1^{\mathsf{DF}}({\varphi_2})\|_{\bV}=\|\bu_1-\bu_2\|_{\bV}\leq L_{\cS_1}\|\varphi_1-\varphi_2\|_{\Psi},
\end{align}
where {\(L_{\cS_1}:=\alpha^{-\frac12}_{\mathsf{DF}}{\alpha_1}C^2_{\mathsf{emb}}\), \(\alpha_{\mathsf{DF}}\)} being the strong monotonicity constant (see, \cite[Equation~3.8]{dhpr_sinum26}), and \(\alpha_1\) is the local Lipschitz continuity constant for  the Darcy--Forchheimer system (see, \cite[Equation~3.7]{dhpr_sinum26}), which can be further bounded by \({\|\bf(0)\|_{0,\Omega}},\alpha_1, C^{\mathsf{ADR}}_{c}, \) and \(\|m\|_{0,\frac65,\Omega}\) (cf.  \eqref{eq:u.estimate} and \eqref{eq:lip.bound.f}).

\smallskip
\noindent\textbf{Lipschitz-continuity of \(\cS^{\mathsf{ADR}}_2\).}
{Let $\bu_1, \bu_2\in\bV$ be two elements in $\bV.$ Fixing $\bu_1, \bu_2$ in \eqref{eq:weak,3},\eqref{eq:weak,4} produces the solutions $(\bzeta_1,\varphi_1), (\bzeta_2,\varphi_2)\in\bW\times\Psi$ corresponding to  \eqref{eq:weak,3}-\eqref{eq:weak,4}. Let us denote $\varphi:=\varphi_1-\varphi_2.$}  Subtracting \eqref{eq:weak,4} from \eqref{eq:weak,3}, adding {$\int_{\Omega}\bu_1\cdot\bw\varphi_2, \int_{\Omega}\bu_2\cdot\bw\varphi_1$,} and subtracting {$2\int_{\Omega}\bu_2\cdot\bw\varphi_2$}, and using Theorem~\ref{th:continuous-ARD}  we obtain
\begin{align*}
   \frac1{C^{\mathsf{ADR}}_{c}} \tnorm{(\bzeta,\varphi)}_{\bW\times\Psi}&\leq\sup_{\substack{(\bw,\psi)\in\bW\times \Psi\\(\bw,\psi)\neq (\bzero,0)}}\frac{{-\int_{\Omega}((\bu_1-\bu_2)\cdot\bw)\varphi_2 + \int_{\Omega}(\bu_2\cdot\bw)\varphi}}{\tnorm{(\bw,\psi)}_{\bW\times \Psi}}.
    \end{align*}
    Finally, using H\"{o}lder's inequality and \eqref{eq:bound.phi.psi}, we conclude the following 
  {  \begin{align}\label{eq:Lip.ARD}
   \frac1{C^{\mathsf{ADR}}_{c}}\tnorm{(\bzeta,\varphi)}_{\bW\times\Psi}&\leq\sup_{\substack{(\bw,\psi)\in\bW\times \Psi\\(\bw,\psi)\neq (\bzero,0)}}\frac{{\rF^{-\frac13}}\|\bu_1-\bu_2\|_{\bV} \tnorm{(\bw,\psi)}_{\bW\times\Psi}\|m\|_{0,\frac65,\Omega}}{{\tnorm{(\bw,\psi)}_{\bW\times \Psi}}}\nonumber\\&\quad +\sup_{\substack{(\bw,\psi)\in\bW\times \Psi\\(\bw,\psi)\neq (\bzero,0)}}\frac{{\rF^{-\frac13}}\|\bu_2\|_{\bV}\tnorm{(\bw,\psi)}_{\bW\times\Psi}\tnorm{(\bzeta,\varphi)}_{\bW\times\Psi}}{{\tnorm{(\bw,\psi)}_{\bW\times \Psi}}},\nonumber
\end{align}
which further implies that 
\begin{align}
    \tnorm{(\bzeta,\varphi)}_{\bW\times\Psi} 
    \leq {\frac{2C^c_{\alpha}}{C_c(C^c_{\alpha}-\rF^{-\frac13})}}\|m\|_{0,\frac65,\Omega}\; \|\bu_1-\bu_2\|_{\bV},
\end{align}
where the above inequality makes sense when $C^c_{\alpha}\rF^{\frac13} \,>\, 1.$ 
This concludes the Lipschitz-continuity 
of \(\cS\) by invoking the bounds from \eqref{eq:Lipschitz.S_DF} and \eqref{eq:Lip.ARD} as 
{\begin{align}\label{eq:Lipschitz.S_ARD} \|\cS(\bu_1)-\cS(\bu_2)\|_{\bV}&=\|\cS_1^{\mathsf{DF}}(\cS_2^{\mathsf{ADR}}(\bu_1))-\cS_1^{\mathsf{DF}}(\cS_2^{\mathsf{ADR}}(\bu_2))\|_{\bV}\\&\leq\alpha_{\mathsf{DF}}^{-1}\alpha_1\|\cS_2^{\mathsf{ADR}}(\bu_1)-\cS_2^{\mathsf{ADR}}(\bu_2)\|_{\Psi}\nonumber\leq \frac{2C^c_{\alpha}}{C_c(C^c_{\alpha}-1)}\|m\|_{0,\frac65,\Omega}\alpha_{\mathsf{DF}}^{-\frac12}\alpha_1\|\bu_1-\bu_2\|_{\bV},
\end{align}}
with the Lipschitz constant {\(L_{\cS}:={\frac{2C^c_{\alpha}}{C_c(C^c_{\alpha}-\rF^{-\frac13})}}\|m\|_{0,\frac65,\Omega}\alpha_{\mathsf{DF}}^{-\frac12}\alpha_1.\)}}
This completes the rest of the proof.
\end{proof}

As a consequence of the previous analysis, we can state the following main result. 
\begin{theorem}[Well-posedness of the fully-coupled continuous problem]\label{th:wellposedness}
 Suppose that \begin{align*}{C^{\mathsf{DF}}_{c,\bu}}\bigl[{\|\bf(0)\|_{0,\Omega}}+C^{\mathsf{ADR}}_{c}\alpha_1\|m\|_{0,\frac65,\Omega}\bigr]  \leq r<\frac12,\quad \text{and
that } \quad L_{\cS}<1. \end{align*}
Then the coupled problem \eqref{eq:weak} has a unique solution $(\bu,p,\bzeta,\varphi) \in \bV \times \rQ\times \bW \times \Psi$. Moreover, there holds 
\begin{align*}
    \|\bu\|_{\bV} + \|p\|_{\rQ} + \|\bzeta\|_{\bW} + \|\varphi\|_{\Psi} 
  \leq  C^{\mathsf{ADR}}_{c}\|m\|_{0,\frac65,\Omega}+&{C^{\mathsf{DF}}_{c,\bu}}\max \biggl\{{C^{\mathsf{DF}}_{c,\bu}}\bigl[{\|\bf(0)\|_{0,\Omega}}+C^{\mathsf{ADR}}_{c}\alpha_1\|m\|_{0,\frac65,\Omega}\bigr],\\&\bigl({C^{\mathsf{DF}}_{c,\bu}}\bigl[{\|\bf(0)\|_{0,\Omega}}+C^{\mathsf{ADR}}_{c}\alpha_1\|m\|_{0,\frac65,\Omega}\bigr]\bigr)^2\biggr\}, \label{estimate su}
\end{align*}
where the constants do not depend on the model parameters $\kappa,\rF,\varrho_0$. 
\end{theorem}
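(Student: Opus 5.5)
The proof applies the Banach contraction mapping theorem to the operator $\cS:\bV\to\bV$ of \eqref{Operator-S.V->V}, restricted to the closed ball $\mathrm{B}_{\bzero}^r$. This ball is a closed subset of the Banach space $\bV$, hence complete. By Lemma~\ref{lemma:ball}, the first hypothesis of the theorem gives $\cS(\mathrm{B}_{\bzero}^r)\subseteq\mathrm{B}_{\bzero}^r$. By Lemma~\ref{lemma:continuity-S}, the second hypothesis $L_{\cS}<1$ makes $\cS$ a contraction on the ball. Hence $\cS$ has a unique fixed point $\bu\in\mathrm{B}_{\bzero}^r$.

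One point needs checking first: $\cS^{\mathsf{ADR}}$ must be well defined on the ball. Theorem~\ref{th:continuous-ARD} requires $\|\hat\bu\|_{0,3,\Omega}\le C_c/(2C^c_\alpha)$. I would derive this from $\|\hat\bu\|_{0,3,\Omega}\le \rF^{-1/3}\|\hat\bu\|_{\bV}\le \rF^{-1/3}r$ together with $r<\frac12$. If this is not immediate, I would add it as a standing smallness assumption on $r$, in the same spirit as \eqref{data-bounded-map-closed}.

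Next I reconstruct the full solution from the fixed point. Set $(\bzeta,\varphi):=\cS^{\mathsf{ADR}}(\bu)$, which solves \eqref{eq:weak,3}--\eqref{eq:weak,4} with advecting velocity $\bu$. Then set $(\tilde\bu,p):=\cS^{\mathsf{DF}}(\varphi)$, which solves \eqref{eq:weak,1}--\eqref{eq:weak,2} with forcing $\bf(\varphi)$. Since $\tilde\bu=\cS(\bu)=\bu$, the quadruple $(\bu,p,\bzeta,\varphi)$ solves \eqref{eq:weak}.

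For uniqueness, take any solution $(\bu^*,p^*,\bzeta^*,\varphi^*)$ with $\bu^*$ in the ball. By uniqueness of each decoupled problem (Theorems~\ref{th:continuous-DF} and \ref{th:continuous-ARD}), $\bu^*$ is a fixed point of $\cS$, so $\bu^*=\bu$, and then $p^*=p$, $\bzeta^*=\bzeta$ and $\varphi^*=\varphi$. This is uniqueness within the ball, which is the standard meaning in this fixed-point framework.

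For the stability bound I would chain the earlier estimates:
\begin{itemize}
\item Theorem~\ref{th:continuous-ARD} gives $\|\bzeta\|_{\bW}+\|\varphi\|_{\Psi}\le\tnorm{(\bzeta,\varphi)}_{\bW\times\Psi}\le C^{\mathsf{ADR}}_c\|m\|_{0,\frac65,\Omega}$.
\item Theorem~\ref{th:continuous-DF} gives $\|(\bu,p)\|_{\bV\times\rQ}\le C^{\mathsf{DF}}_c\max\{\|\bf(\varphi)\|_{0,\Omega},\|\bf(\varphi)\|_{0,\Omega}^2\}$.
\item By \eqref{eq:lip.bound.f} and the embedding $\|\varphi\|_{0,6,\Omega}\lesssim\|\varphi\|_{\Psi}$, as used in \eqref{eq:S.bound}, we have $\|\bf(\varphi)\|_{0,\Omega}\le\|\bf(0)\|_{0,\Omega}+C^{\mathsf{ADR}}_c\alpha_1\|m\|_{0,\frac65,\Omega}$.
\end{itemize}
Inserting the third bound into the second and adding the first yields the stated estimate, after absorbing constants in the same way as \eqref{eq:S.bound}.

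Parameter independence in $\kappa,\rF,\varrho_0$ then follows because $C^{\mathsf{DF}}_c$ and $C^{\mathsf{DF}}_{c,\bu}$ are independent of $\kappa,\rF$, and $C^{\mathsf{ADR}}_c$ is independent of $\varrho_0$.

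The main obstacle is the well-definedness of $\cS^{\mathsf{ADR}}$ on the ball, that is, reconciling the velocity smallness condition of Theorem~\ref{th:continuous-ARD} with the radius $r$ without introducing $\rF$-dependence. I would first try the weighted-norm bound above. If it only yields a condition like $\rF^{-1/3}r\le C_c/(2C^c_\alpha)$, I would state that condition explicitly as part of the hypotheses on $r$.
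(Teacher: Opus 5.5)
Your proposal follows the paper's proof. Both apply Banach's fixed-point theorem to $\cS$ on $\mathrm{B}_{\bzero}^r$ via Lemmas~\ref{lemma:ball} and~\ref{lemma:continuity-S}, rebuild $(p,\bzeta,\varphi)$ from the fixed point, and read off the a priori bound from Theorems~\ref{th:continuous-DF} and~\ref{th:continuous-ARD}. The point you flag is genuine, and the paper does not resolve it, since it only asserts that $\cS$ is well defined: on the ball one only gets $\|\hat\bu\|_{0,3,\Omega}\le \rF^{-1/3}r$, which the stated hypotheses do not force below $C_c/(2C^c_\alpha)$. So adding $\rF^{-1/3}r\le C_c/(2C^c_\alpha)$ as an explicit (and $\rF$-dependent) hypothesis, and claiming uniqueness only within the ball, is the right way to make the argument airtight.
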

\begin{proof}
We first recall that Lemma \ref{lemma:ball} guaranties that $\cS$ maps $\rW$ to itself. Then, bearing in mind the Lipschitz-continuity of $\cS : \rW \to \rW$ given by Lemma \ref{lemma:continuity-S} along with the fact that $L_{\cS}<1$, a direct application of the Banach fixed-point theorem yields the existence of a unique fixed point $\bu \in \bV.$ Hence, a unique solution of the system \eqref{eq:weak}. In addition, the {\it a priori} estimate is a direct consequence of the continuous dependence on data provided by Theorems \ref{th:continuous-DF} and \ref{th:continuous-ARD}.
\end{proof}
\begin{remark}
   {Let us briefly remark that, due to the coupling between the Darcy--Forchheimer velocity and the transport {equation, our  fixed-point framework does not allow us to readily} obtain $\alpha$-independent estimates for $\|\hat{\bu}\|_{\bV}$. {This manifests itself} in the hypothesis of Theorem~\ref{th:continuous-ARD} {as well as in}  the Lipschitz constant $L_{\mathcal{S}}$ (cf.  \eqref{eq:Lipschitz.S_ARD}). }
\end{remark}
\section{Finite element discretisation}\label{sec:fe}
Let \( {\cT}_h \) be a regular simplicial mesh defined in a bounded Lipschitz domain \( \Omega \), consisting of tetrahedral elements (or triangular elements in two dimensions) \( K \) with diameter \( h_K \). We define the mesh size as \( h := \max\bigl(h_K : K \in {\cT}_h\bigr) \). Given an integer \( k \geq 0 \) and a generic element \( K \in {\cT}_h \), we denote by \( \mathbb{P}_k(K) \) the space of polynomials defined locally in \( K \) with a degree at most \( k \), and denote by \( \mathbb{P}_k(\cT_h) \) its global counterpart. In addition, let ${\mathbb{RT}_k(\cT_h)}$ denote the \( \mathbf{H}_{\Gamma_{\bullet}}({\vdiv}, \Omega) \)-discrete conforming subspace defined by the Raviart--Thomas  elements of order \( k \), i.e.,  
\[{\mathbb{RT}_k(\cT_h)} := \{ \bu \in \mathbf{H}_{\Gamma_{\bullet}}({\vdiv_s}, \Omega):\bu|_{K} = [\mathbb{P}_k(K)]^d \oplus \widetilde{\mathbb{P}}_k(K) \bx, \ \bx \in \Omega \subset \mathbb{R}^d, K \in {\cT}_h \},\]
where \( \widetilde{\mathbb{P}}_k(K) \) is the space of all polynomials over \( K \) of degree exactly equal to \( k.\)
{We follow \cite[Section~4]{dhpr_sinum26} for constructing discrete spaces for the system~\eqref{eq:weak}. Define conforming discrete weighted subspaces, \(\bV_h\subseteq\bV\) and \(\bW_h\subseteq\bW\) as follows 
\begin{subequations}
\begin{align}
 \bV_h&:=  \kappa^{-\frac{1}{2}}\bL_h^2(\Omega) \cap \bH_{h,\Gamma_{\bu}}(\vdiv,\Omega) \cap \rF^{\frac13} \bL_h^3(\Omega),\label{eq:discrete.Vh}\\
  \bW_h&:= \alpha^{-\frac12}\bL^2_h(\Omega)\cap\bH_{h,\Gamma_p}(\vdiv_{\frac65},\Omega), \label{eq:discrete.Wh}
 \end{align}\end{subequations}
 where \( {\bL}_h^r(\Omega), \,\,{\bH}_{{h,\Gamma_{\bullet}}}({\vdiv}, \Omega), \,{\bH}_{{h,\Gamma_{\bullet}}}({\vdiv}_s, \Omega) \)-norms correspond to  functions in ${\mathbb{RT}_k(\cT_h)}$ equipped with  ${\bL}^r(\Omega)$, for \(r=2,3,\) {$\bH_{\Gamma_{\bullet}}(\vdiv,\Omega),$ and $\bH_{\Gamma_{\bullet}}(\vdiv_{\frac65},\Omega)$}.
Next, define the discrete gradient operator \( \nabla_h: {\mathbb{P}_k(\cT_h)} \to {\mathbb{RT}_k(\cT_h)} \) (cf. \cite[Equation~3.1]{arnold1997preconditioning}) by 
\begin{equation}\label{eq:disc.grad}
\sum_{K\in\cT_h}\int_K(\nabla_h q_h)\cdot\bv_h = -\sum_{K\in\cT_h}\int_K(\vdiv\bv_h)\,q_h \quad \forall q_h \in {\mathbb{P}_k(\cT_h)}, \text{ and } \forall \bv_h \in {\mathbb{RT}_k(\cT_h)}.
\end{equation}
Additionally, define the following broken Sobolev spaces 
\begin{align*}
    {\rW^{1,r}}(\mathcal{T}_h)&:=\{p_h\in\rL^2(\Omega):p_h|_K\in\rW^{1,r}(K) \quad \forall K\in \mathcal{T}_h\}, \quad \text{for } r=2,\frac32,
\end{align*}
and whenever needed, we add a subscript $\Gamma_{\bullet}$ denoting a part on the boundary, where the value of the function is prescribed to zero.  
These spaces are equipped with DG (broken Sobolev) seminorms 
\(|\bullet|_{{1,\mathcal{T}_h}}\) and 
\(|\bullet|_{{1, r, \mathcal{T}_h}}\),
defined as 
\begin{align*}
{\rH^1}(\mathcal{T}_h) \ni \zeta & \mapsto  |\zeta|^2_{{1,\mathcal{T}_h}} := \biggl(
\sum_{K\in\mathcal{T}_h}\| {\nabla} \zeta\|^{2}_{0,K} 
+ \sum_{F\in \mathcal{F}_h} h_F^{-1}\|[\![\zeta]\!]\|^2_{0,F}
\biggr)^{\frac12}, 
\\
\rW^{1,r}(\mathcal{T}_h) \ni \xi & \mapsto 
{|\xi|_{{1,r,\mathcal{T}_h}} := \biggl(\sum_{K\in\mathcal{T}_h}\|{\nabla} \xi\|^{r}_{0,r,K}
+ \sum_{F\in \mathcal{F}_h} h_F^{1-r}\|[\![\xi]\!]\|^{r}_{0,r,F}
\biggr)^{\frac{1}{r}}},\end{align*}
respectively, where $\mathcal{F}_h$ denotes the set of facets in the mesh $\cT_h$. 
Further, define the following weighted spaces for the pressure, as the sum of the infinite-dimensional (but broken) spaces $\rL^2(\Omega)$, \(\rH^1_{\Gamma_p}(\mathcal{T}_h)\) and \(\rW^{1,\frac32}_{\Gamma_p}(\mathcal{T}_h),\) and the concentration, as the sum of \(\rL^6(\Omega)\) and \(\rH^1_{\Gamma_{\bu}}(\cT_h):\)  
\begin{subequations}
\begin{align}\label{eq:broken.Qhat}
{\widehat{\rQ}}(\cT_h)&:=\rL^2(\Omega)+\kappa^\frac12\rH^1_{\Gamma_p}(\mathcal{T}_h)+\rF^{-\frac13}\rW^{1,\frac32}_{\Gamma_p}(\mathcal{T}_h),\\
\widehat{\Psi}(\cT_h)&:=\rL^6(\Omega)+\alpha^{\frac12}\rH^1_{\Gamma_{\bu}}(\cT_h).
\end{align}\end{subequations}
Define broken weighted norms on \({\widehat{\rQ}}(\cT_h)\) and \(\widehat{\Psi}(\cT_h)\) (see, \cite[Section~4]{dhpr_sinum26}) as  
\begin{align}
\|q\|_{{\widehat{\rQ}(\cT_h)}}&:= 
\inf_{\substack{
    q = q_{1} + q_{2} + q_{3} \\
    (q_{1},q_{2},q_{3}) \in \\ \rL^2(\Omega)
     \times\rH_{\Gamma_p}^1(\mathcal{T}_h)
    \times \rW_{\Gamma_p}^{1,\frac32}(\mathcal{T}_h)
}}
\left(
    \|q_{1}\|_{0,\Omega} 
    +\kappa^{\frac12}\, |q_{2}|_{1,\mathcal{T}_h} 
    + \rF^{-\frac13} |q_{3}|_{1,\frac32,\mathcal{T}_h}
\right),\label{norm:Q.hat}\\
\|\psi\|_{\widehat{\Psi}(\cT_h)}&:=\inf_{\substack{
    \psi = \psi_{1} + \psi_{2} \\
    (\psi_{1},\psi_{2}) \in \\ \rL^6(\Omega)
     \times\rH_{\Gamma_{\bu}}^1(\mathcal{T}_h)
}}
\left(
    \|\psi_{1}\|_{0,6,\Omega} 
    +\alpha^{\frac12}\, |\psi_{2}|_{1,\mathcal{T}_h}  
\right).\label{norm:Psi.hat}
\end{align}
Finally, we set  discrete spaces \(\rQ_h\) and \(\Psi_h\) (non-conforming subspaces of   \(\rQ\) and \(\Psi\),  respectively) as    
\begin{subequations}
\begin{align}
\rQ_h &:=  \rL_h^2(\Omega) +\kappa^{\frac12} \rH^1_{h,{\Gamma_p}}(\Omega)+\rF^{-\frac13}\rW^{1,\frac32}_{h,\Gamma_p}(\Omega),\label{eq:disc.Ph}\\
\Psi_h&:=\rL^6_h(\Omega)+\alpha^{\frac12}\rH^1_{h,\Gamma_{\bu}}(\Omega),\label{eq:disc.Psih}
\end{align}
\end{subequations}
where \( \rL_h^r(\Omega), \rH^1_h(\Omega), \) and \(\rW^{1,\frac32}_h(\Omega)\) are polynomial spaces having discrete functions from  \( {\mathbb{P}_k(\cT_h)} \), and equipped with the \( \rL^r(\Omega)\) norm (for \(r=2,6\))  and the broken \(\rH^1(\cT_h),\,\rW^{1,\frac32}(\cT_h)\) seminorms, respectively. 
We equip \(\rQ_h\) and \(\Psi_h\) with the $\|\bullet\|_{\widehat{\rQ}(\cT_h)}$ and $\|\bullet\|_{\widehat{\Psi}(\cT_h)}$ norms, respectively. 
Next, set discrete bilinear forms  
$b_{1,h}: \bV_h\times \rQ_h \to \mathbb{R}, b_{2,h}: \bW_h\times \Psi_h \to \mathbb{R}$ 
 and the discrete linear functional \(H_h:\Psi_h\to\mathbb{R}\) as follows
\begin{align*}
b_{1,h}(\bv_h,q_h)&:=\int_{\Omega}\vdiv\bv_h\,q_h,\quad
b_{2,h}(\bzeta_h,\psi_h):=\int_{\Omega}\vdiv\bzeta_h\psi_h, \quad H_h(\psi_h):=-\int_{\Omega}m\psi_h, 
\end{align*}
for all \( (\bv_h\,  q_h, \bzeta_h,\psi_h)\in \bV_h\times {\rQ_h} \times \bW_h\times\Psi_h.\)
Then, with the  FE spaces \eqref{eq:discrete.Vh}, \eqref{eq:discrete.Wh}, \eqref{eq:disc.Ph} and \eqref{eq:disc.Psih} 
 the mixed FE formulation of \eqref{eq:weak} consists in  finding $(\bu_h,p_h,\bzeta_h,\varphi_h)\in \bV_h\times \rQ_h\times\bW_h\times\Psi_h$ such that 
\begin{subequations}\label{eq:weak_Galerkin}
    \begin{align}
     a_1(\bu_h,\bv_h) + d_1(\bu_h; \bu_h,\bv_h) +  b_{1,h}(\bv_h,p_h) & = F^{\varphi_h}(\bv_h) \qquad \forall \bv_h \in \bV_h,\label{eq:weak,1h}\\
    b_{1,h}(\bu_h,q_h) & = 0 \qquad\qquad \forall q_h \in \rQ_h,\label{eq:weak,2h}\\
    a_2(\bzeta_h,\bw_h) + b_{2,h}(\bw_h,\varphi_h) + d_2^{\bu_h}(\bw_h, \varphi_h) & = 0\qquad \qquad \forall \bw_h \in \bW_h, \label{eq:weak,3h}\\
    b_{2,h}(\bzeta_h, \psi_h) - c_{2,h}(\varphi_h,\psi_h) & = H_h(\psi_h) \qquad \forall \psi_h \in \Psi_h. \label{eq:weak,4h}
    \end{align}
\end{subequations}

\noindent
{A direct discrete analysis based on
 \(\bV_h:=(\mathbb{RT}_k(\cT_h),\|\bullet\|_{\vdiv,\Omega}),\) \(\bW_h:=(\mathbb{RT}_k(\cT_h),\|\bullet\|_{\vdiv_{\frac65},\Omega}),\) \(\rQ_h:=(\mathbb{P}_k(\cT_h),\|\bullet\|_{0,\Omega}),\) and \(\Psi_h:=(\mathbb{P}_k(\cT_h),\|\bullet\|_{0,6,\Omega})\), 
endowed with the standard norms, does not lead to parameter-robust inf-sup and Braess conditions.
The parameter robustness is instead obtained by adopting the weighted discrete spaces \eqref{eq:discrete.Vh}-\eqref{eq:discrete.Wh}, \eqref{eq:disc.Ph}-\eqref{eq:disc.Psih}.}  

\subsection{Robust discrete solvability}
 Consider the following system for a fixed $\hat{\varphi}_h\in \Psi_h$ 
\begin{subequations}\label{eq:weak.fixed.u.hat.disc}
    \begin{align}
    a_1(\bu_h,\bv_h) + d_1(\bu_h; \bu_h,\bv_h) +  b_{1,h}(\bv_h,p_h) & = F^{\hat\varphi_h}(\bv_h) \quad \forall \bv_h \in \bV_h,\label{eq:weak,1.hat.u.disc}\\
    b_{1,h}(\bu_h,q_h) & = 0 \quad \forall q_h \in \rQ_h,\label{eq:weak,2.hat.u.disc}
    \end{align}
\end{subequations}
and recall from \cite[Theorem~4.3]{dhpr_sinum26} the following result.
\begin{theorem}[Well-posedness of discrete  Darcy--Forchheimer equations]\label{th:discrete-DF}
 Assume that 
 $\bf(\hat{\varphi}_h)\in \bL^2(\Omega).$ Then there exists a unique  solution $(\bu_h,p_h)\in \bV_h\times\rQ_h$ to   \eqref{eq:weak,1h}-\eqref{eq:weak,2h}. Furthermore, there is a positive constant ${C^{\mathsf{DF}}_{\mathsf{d}}},$ independent of the parameters $\kappa, \rF,h$,  such that 
\begin{equation*}
 \|(\bu_h, p_h )\|_{\bV\times\widehat{\rQ}(\cT_h)}  \leq {C^{\mathsf{DF}}_{\mathsf{d}}}\max \bigl(\|\bf(\hat{\varphi}_h)\|_{0,\Omega},\|\bf(\hat{\varphi}_h)\|_{0,\Omega} ^2\bigr).
    \end{equation*}
Additionally, there exists a positive constant \(C^{\mathsf{DF}}_{\mathsf{d},\bu_h},\) independent of \(\kappa,\rF, \bu_h,\) and \(h\), such that 
    \begin{align}\label{est:disc.uh.bound}
        \|\bu_h\|_{\bV}\leq C^{\mathsf{DF}}_{\mathsf{d},\bu_h}\,\|\bf(\hat{\varphi}_h)\|_{0,\Omega}.
    \end{align}
\end{theorem}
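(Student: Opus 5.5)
The plan is to mirror the continuous argument behind Theorem~\ref{th:continuous-DF}, now on the weighted discrete pair $\bV_h\times\rQ_h$ with the broken norm $\|\bullet\|_{\widehat{\rQ}(\cT_h)}$. The nonlinear problem \eqref{eq:weak,1h}-\eqref{eq:weak,2h} is viewed as a nonlinear saddle-point problem. The operator $\bV_h\ni\bu_h\mapsto a_1(\bu_h,\bullet)+d_1(\bu_h;\bu_h,\bullet)$ is handled on the discrete kernel $\bZ_h:=\{\bv_h\in\bV_h: b_{1,h}(\bv_h,q_h)=0\ \forall q_h\in\rQ_h\}$. The pressure is then recovered through a discrete inf-sup condition for $b_{1,h}$ that is robust in $\kappa,\rF,h$.

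\textbf{Step 1 (kernel problem).} With Raviart--Thomas/$\mathbb{P}_k$, $\vdiv\bV_h\subseteq\mathbb{P}_k(\cT_h)\subseteq\rQ_h$, so every $\bv_h\in\bZ_h$ is exactly divergence free. Hence on $\bZ_h$ the $\bV$-norm reduces to $\kappa^{-1/2}\|\bv_h\|_{0,\Omega}+\rF^{1/3}\|\bv_h\|_{0,3,\Omega}$. The map $\bv\mapsto\kappa^{-1}\bv+\rF|\bv|\bv$ is strictly monotone, hemicontinuous and coercive on the finite-dimensional space $\bZ_h$, using $(|\bu|\bu-|\bv|\bv)\cdot(\bu-\bv)\ge c|\bu-\bv|^3$. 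Browder--Minty therefore gives a unique $\bu_h\in\bZ_h$. Testing with $\bv_h=\bu_h$ gives
\[
\kappa^{-1}\|\bu_h\|^2_{0,\Omega}+\rF\|\bu_h\|^3_{0,3,\Omega}\le\|\bf(\hat\varphi_h)\|_{0,\Omega}\|\bu_h\|_{0,\Omega}.
\]
This does not yet bound $\|\bu_h\|_{0,\Omega}$ uniformly as $\kappa\to\infty$. I would follow \cite{dhpr_sinum26} and use the fact that the load is controlled in the dual of the sum norm, i.e.\ absorb via $\kappa^{-1/2}\|\bu_h\|_{0,\Omega}$ together with a discrete Poincar\'e/Helmholtz argument on divergence-free fields. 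This yields \eqref{est:disc.uh.bound} with a constant independent of $\kappa,\rF,h$.

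\textbf{Step 2 (robust discrete inf-sup).} This is the main obstacle. The target is
\[
\sup_{\bv_h}\frac{b_{1,h}(\bv_h,q_h)}{\|\bv_h\|_{\bV}}\ge\beta\|q_h\|_{\widehat{\rQ}(\cT_h)},
\]
with $\beta$ independent of $\kappa,\rF,h$. By the duality \eqref{eq:duality}, it suffices to construct a single operator $\cS_h$, the discrete analogue of $\cS_c$ in Lemma~\ref{lem:inf-sup:ARD}. It must be simultaneously bounded from the duals of $\rL^2$, $\kappa^{1/2}\rH^1(\cT_h)$ and $\rF^{-1/3}\rW^{1,3/2}(\cT_h)$ into the respective components of $\bV_h$. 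I would first try the discrete gradient $\nabla_h$ from \eqref{eq:disc.grad}: it gives $-b_{1,h}(\nabla_h q_h,q_h)=\|\nabla_hq_h\|^2_{0,\Omega}$, and the norm equivalence $\|\nabla_h q_h\|_{0,r}\simeq|q_h|_{1,r,\cT_h}$ (for $r=2,3/2$ via duality with $\rL^3$) handles the broken seminorm parts. For the $\rL^2$ part I would combine this with the canonical RT interpolant of a continuous right-inverse of the divergence. These are the technical estimates of \cite[Section~4]{dhpr_sinum26}, whose uniform $\rL^3$-stability is the delicate point.

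\textbf{Step 3 (pressure and stability bound).} With $\bu_h$ in hand, $p_h$ is obtained uniquely from \eqref{eq:weak,1h} by the inf-sup of Step~2. Its norm is bounded by testing and using boundedness of $a_1$, $d_1$ in the weighted norms:
\[
\beta\|p_h\|\le\kappa^{-1/2}\|\bu_h\|_{0,\Omega}+\rF^{2/3}\|\bu_h\|^2_{0,3,\Omega}+\|\bf\|_{0,\Omega}.
\]
Combined with Step~1, the quadratic term produces the $\max(\|\bf\|,\|\bf\|^2)$ in the statement. Uniqueness of $\bu_h$ follows from strict monotonicity on $\bZ_h$, and uniqueness of $p_h$ from the inf-sup condition.
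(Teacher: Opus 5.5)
Your existence--uniqueness argument is fine. On the finite-dimensional kernel $\bZ_h$, where $\vdiv\bv_h=0$, the map $\bv_h\mapsto\kappa^{-1}\bv_h+\rF|\bv_h|\bv_h$ is strictly monotone and coercive. The pressure $p_h$ is then recovered from surjectivity of $\vdiv:\mathbb{RT}_k(\cT_h)\to\mathbb{P}_k(\cT_h)$, for which any positive inf-sup constant suffices. The paper gives no proof of this theorem; it imports it from \cite[Theorem~4.3]{dhpr_sinum26}, and your plan defers both robust ingredients to the same source.

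The genuine gap is the robust bound in Step~1, and it is not a matter of missing detail. The mechanism you suggest does not exist. On $\bZ_h$ the divergence part of the norm vanishes, so all one has is $(\bf,\bv_h)\le\|\bf\|_{0,\Omega}\min\{\kappa^{1/2},|\Omega|^{1/6}\rF^{-1/3}\}\|\bv_h\|_{\bV}$. A Helmholtz splitting of $\bf$ only discards the gradient part, which is already invisible on $\bZ_h$, and no Poincar\'e-type inequality controls $\|\bv_h\|_{0,\Omega}$ for divergence-free fields. Robustness in $\kappa$ and $\rF$ jointly therefore forces $\kappa$ to be bounded above (say $\kappa\le1$). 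You also use this tacitly in Step~3 when you bound $F^{\hat\varphi_h}(\bv_h)$ by $\|\bf\|_{0,\Omega}\|\bv_h\|_{\bV}$. More decisively, even for $\kappa\le1$ the energy identity only gives $\rF\|\bu_h\|^3_{0,3,\Omega}\le\|\bf\|_{0,\Omega}\|\bu_h\|_{0,\Omega}\le\kappa\|\bf\|^2_{0,\Omega}$, i.e.\ $\rF^{1/3}\|\bu_h\|_{0,3,\Omega}\le\kappa^{1/3}\|\bf\|^{2/3}_{0,\Omega}$, and the exponent $2/3$ is sharp uniformly in $\rF$.

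To see this, fix $h$, pick $\bw_h\in\bZ_h\setminus\{\bzero\}$, and take $\kappa=1$, $\bf=\epsilon\bw_h$, $\rF=\epsilon^{-1}$. Testing \eqref{eq:weak,1h} with $\bv_h=\bw_h$ removes the pressure term. Setting $X:=\rF^{1/3}\|\bu_h\|_{0,3,\Omega}$ and using H\"older's inequality together with $\|\bu_h\|_{0,\Omega}\le|\Omega|^{1/6}\|\bu_h\|_{0,3,\Omega}$ gives
\[
\epsilon\|\bw_h\|^2_{0,\Omega}\le|\Omega|^{\frac16}\epsilon^{\frac13}X\|\bw_h\|_{0,\Omega}+\epsilon^{-\frac13}X^2\|\bw_h\|_{0,3,\Omega}.
\]
Write $X=\theta\epsilon^{2/3}$; this becomes $\|\bw_h\|^2_{0,\Omega}\le|\Omega|^{1/6}\theta\|\bw_h\|_{0,\Omega}+\theta^2\|\bw_h\|_{0,3,\Omega}$. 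Hence $\theta\ge\theta_0>0$, where $\theta_0$ depends only on $\bw_h$ and $|\Omega|$, and so $\|\bu_h\|_{\bV}\ge\theta_0\epsilon^{2/3}$. By contrast, $\max\{\|\bf\|_{0,\Omega},\|\bf\|^2_{0,\Omega}\}=\epsilon\|\bw_h\|_{0,\Omega}$ for small $\epsilon$. Letting $\epsilon\to0$ contradicts both \eqref{est:disc.uh.bound} and the max-bound for any $\rF$-independent constant. The same test applies verbatim to \eqref{eq:u.estimate}.

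For the same reason, Step~1 cannot be reached through a ``uniform strong monotonicity'' $\alpha\|\bu-\bv\|^2_{\bV}\le\dots$ of the kind the paper invokes elsewhere. The term $\rF\|\bw\|^3_{0,3,\Omega}$ does not dominate $\rF^{2/3}\|\bw\|^2_{0,3,\Omega}$ when $\rF^{1/3}\|\bw\|_{0,3,\Omega}<1$.

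What your Steps~1--3 actually prove, for $\kappa\le1$ and granting the robust inf-sup of Step~2 (which you also only defer), is
\[
\|\bu_h\|_{\bV}\lesssim\|\bf\|_{0,\Omega}+\|\bf\|^{2/3}_{0,\Omega},\qquad \|p_h\|_{\widehat{\rQ}(\cT_h)}\lesssim\|\bf\|_{0,\Omega}+\|\bf\|^{4/3}_{0,\Omega}.
\]
This is a bound by $\max\{\|\bf\|^{2/3}_{0,\Omega},\|\bf\|^{4/3}_{0,\Omega}\}$. The statement as written needs an extra restriction on the parameter/data regime, not a sharper estimate.
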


\noindent We next prove the following lemma, which is crucial in deducing the discrete inf-sup stability of \(b_{2,h}(\bullet,\bullet)\).
\begin{lemma}\label{lem:disc.inf-sup:ARD}
   There exists an operator \(\cS_{\mathsf{d}}\) satisfying the following properties:
    \begin{gather}\nonumber
    \cS_{\mathsf{d}} \in \cL(\Psi_h', \bW_h),\quad 
    \|\cS_{\mathsf{d}}\|_{\cL(\Psi_h', \bW_h)}\ \text{is independent of } \alpha,\varrho_0, h\quad \text{and}\\
    \langle{{\vdiv}_{\frac65}} [\cS_{\mathsf{d}}(\ell)], \psi_h\rangle = \langle \ell, \psi_h \rangle \quad \text{for all}\ \ell \in \Psi_h',\psi_h \in \Psi_h.\label{S:disc.prop}
\end{gather}
\end{lemma}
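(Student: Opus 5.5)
The plan mirrors the proof of Lemma~\ref{lem:inf-sup:ARD}. We build $\cS_{\mathsf{d}}$ separately on the two summands of $\Psi_h=\rL^6_h(\Omega)+\alpha^{\frac12}\rH^1_{h,\Gamma_{\bu}}(\Omega)$. Then we use the sum--intersection duality \eqref{eq:duality}, applied with $X=\alpha^{\frac12}\rH^1_{h,\Gamma_{\bu}}(\Omega)$, $Y=\rL^6_h(\Omega)$ and $Z$ trivial, to write
\[
\Psi_h'=\alpha^{-\frac12}\bigl(\rH^1_{h,\Gamma_{\bu}}(\Omega)\bigr)'\cap\bigl(\rL^6_h(\Omega)\bigr)'.
\]
The target space $\bW_h$ is the intersection $\alpha^{-\frac12}\bL^2_h\cap\bH_{h,\Gamma_p}(\vdiv_{\frac65},\Omega)$. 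It therefore suffices to produce a single map $\cS_{\mathsf{d}}$ that is bounded both from $(\rH^1_{h,\Gamma_{\bu}})'$ to $\bL^2_h$ and from $(\rL^6_h)'$ to $\bH_{h,\Gamma_p}(\vdiv_{\frac65})$, with constants independent of $h$. The $\alpha$-scalings then cancel exactly as in the continuous case. Since all spaces are finite dimensional, density is automatic.

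\textbf{Construction.} Let $\ell\in\Psi_h'$, and identify it with its Riesz-type representative $g_h\in\mathbb{P}_k(\cT_h)$ satisfying $\langle\ell,\psi_h\rangle=\int_\Omega g_h\psi_h$. The right choice of $\cS_{\mathsf{d}}(\ell)$ is the discrete counterpart of $-\nabla\vartheta$. Concretely, we solve the mixed discrete Poisson problem: find $(\btau_h,\vartheta_h)\in\mathbb{RT}_k(\cT_h)\times\mathbb{P}_k(\cT_h)$, with $\btau_h\cdot\bn=0$ on $\Gamma_p$, such that
\[
\int_\Omega\btau_h\cdot\bv_h-\int_\Omega\vartheta_h\vdiv\bv_h=0,\qquad \int_\Omega\vdiv\btau_h\,q_h=\int_\Omega g_h q_h .
\]
Equivalently, $\btau_h=\nabla_h\vartheta_h$ in the sense of \eqref{eq:disc.grad}. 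Because $\vdiv\mathbb{RT}_k=\mathbb{P}_k$, we get $\vdiv\btau_h=g_h$, and hence \eqref{S:disc.prop} holds exactly. This is the construction of \cite[Lemma~4.2]{dhpr_sinum26}, and I would invoke it wherever possible.

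\textbf{Bounds.} Two estimates are needed.

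First, the $\bL^2$ bound. We need $\|\btau_h\|_{0,\Omega}\lesssim\|\ell\|_{(\rH^1_{h,\Gamma_{\bu}})'}$ with the broken norm $|\bullet|_{1,\cT_h}$. Testing the first equation with $\btau_h$ gives $\|\btau_h\|_0^2=\int\vartheta_h g_h=\langle\ell,\vartheta_h\rangle\le\|\ell\|\,|\vartheta_h|_{1,\cT_h}$. The required input is then the discrete Poincaré-type estimate $|\vartheta_h|_{1,\cT_h}\lesssim\|\nabla_h\vartheta_h\|_{0,\Omega}$. This is standard for mixed methods (Arnold et al.) and is the discrete inf-sup used in \cite{dhpr_sinum26}.

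Second, the divergence bound. We need $\|\vdiv\btau_h\|_{0,\frac65}=\|g_h\|_{0,\frac65}\lesssim\|\ell\|_{(\rL^6_h)'}$. This reduces to the uniform equivalence between $\sup_{\psi_h}\int g_h\psi_h/\|\psi_h\|_{0,6}$ and $\|g_h\|_{0,\frac65}$ on $\mathbb{P}_k(\cT_h)$. To prove it, take $\psi_h=\Pi_h(\mathrm{sgn}(g_h)|g_h|^{\frac15})$ with $\Pi_h$ the $\rL^2$ projection. Then use the $h$-uniform $\rL^6$ stability of $\Pi_h$ on shape-regular meshes, which holds elementwise since $\Pi_h$ is local for discontinuous $\mathbb{P}_k$.

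\textbf{Main obstacle.} I expect the main obstacle to be securing that a single operator controls both pieces simultaneously, with $h$-independent constants, and that the $\bL^2$ bound holds in the broken $\rH^1$ dual norm under mixed boundary conditions. The broken Poincaré/discrete-gradient estimate with mixed boundary conditions is the delicate ingredient, and I would cite it from \cite{arnold1997preconditioning,dhpr_sinum26}.

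Once both bounds are in place, the intersection structure gives
\[
\|\cS_{\mathsf{d}}\|_{\cL(\Psi_h',\bW_h)}\le\max(C_1,C_2),
\]
where $C_1$ and $C_2$ are independent of $\alpha$, $\varrho_0$ and $h$.
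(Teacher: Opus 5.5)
Your proof is correct, but for the $(\rL^6_h)'$ piece it takes a different route from the paper.

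\textbf{The paper's route.} It imports the $\bL^2$ piece from \cite[Lemma~4.1]{dhpr_sinum26}. For the divergence piece it works at the continuous level:
\begin{itemize}
\item it extends $|q_h|^4q_h$ by zero and solves a Poisson problem with mixed boundary conditions;
\item it invokes $\rW^{2,\frac65}$ regularity to get $\varsigma=\nabla z\in\rW^{1,\frac65}(\Omega)$;
\item it applies the Raviart--Thomas projection, using its stability and commuting property, to obtain $\vdiv\varsigma_h=P_h(|q_h|^4q_h)$ with $\|\vdiv\varsigma_h\|_{0,\frac65,\Omega}\lesssim\|q_h\|^5_{0,6,\Omega}$.
\end{itemize}
The two pieces come from two different constructions. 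They are then glued through \eqref{eq:duality}, even though the intersection argument needs one operator bounded in both senses.

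\textbf{Your route.} Your discrete mixed Poisson solve avoids all of this.
\begin{itemize}
\item It is a single linear map on $\Psi_h'$.
\item $\vdiv\btau_h=g_h$ holds exactly, because $\vdiv\mathbb{RT}_k(\cT_h)=\mathbb{P}_k(\cT_h)$.
\item The divergence bound therefore reduces to the norm equivalence $\|g_h\|_{0,\frac65,\Omega}\lesssim\|\ell\|_{(\rL^6_h)'}$. Your projected test function gives this with a constant depending only on $k$ and $d$, since the elementwise $\rL^2$ projection is $\rL^6$-stable by affine invariance.
\item The $\bL^2$ bound via $|\vartheta_h|_{1,\cT_h}\lesssim\|\nabla_h\vartheta_h\|_{0,\Omega}$ is standard. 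Take $\bv_h\in\mathbb{RT}_k(\cT_h)$ whose face moments are $-h_F^{-1}[\![\vartheta_h]\!]$ and whose interior moments are $\nabla\vartheta_h$. Here jumps on faces of $\Gamma_{\bu}$ are traces, and faces of $\Gamma_p$ carry no normal degrees of freedom.
\end{itemize}

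\textbf{Comparison.} Your route needs no elliptic regularity for a mixed boundary value problem, which is delicate on a general Lipschitz domain. It also needs no Fortin operator, and it makes the intersection argument legitimate. The paper's route instead produces the discrete operator as a commuting-projection image of a continuous one, mirroring Lemma~\ref{lem:inf-sup:ARD}.

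\textbf{Two cosmetic points.} With the sign convention of \eqref{eq:disc.grad}, your flux is $\btau_h=-\nabla_h\vartheta_h$. Also, $\|\bullet\|_{\bW}$ is a sum whereas $\|\ell\|_{\Psi_h'}=\max\bigl(\|\ell\|_{(\rL^6_h)'},\alpha^{-\frac12}\|\ell\|_{(\rH^1_{h,\Gamma_{\bu}})'}\bigr)$. So the final constant is $C_1+C_2$ rather than $\max(C_1,C_2)$.
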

\begin{proof}
{First, construct \(\cS_{\mathsf{d}}\) in such a way that it satisfies the following properties 
\[\cS_{\mathsf{d}}\in\cL\bigl((\rH^1_{h,\Gamma_{\bu}}(\Omega))', (\bL_h^2(\Omega)\bigr),\quad \text{and} \quad  \cS_{\mathsf{d}}\in\cL\bigl((\rL_h^6(\Omega))', \bH_{h,\Gamma_p}(\vdiv_{\frac65},\Omega)\bigr),\]respectively, and then using the fact that (cf. \eqref{eq:duality}) \(\Psi_h'=(\rH^1_{h,\Gamma_{\bu}}(\Omega)+\rL_h^6(\Omega))'=(\rH^1_{h,\Gamma_{\bu}}(\Omega))'\cap(\rL_h^6(\Omega))',\) we conclude that \( \cS_{\mathsf{d}} \in {\cL}(\Psi_h', \bW_h) \).
The existence of \(\cS_{\mathsf{d}}\in\cL\bigl((\rH^1_{h,\Gamma_{\bu}}(\Omega))',\bL_h^2(\Omega)\bigr)\)
follows directly from \cite[Lemma~4.1]{dhpr_sinum26}.} 

Next, we aim to construct \(\cS_{\mathsf{d}}\in\cL\bigl((\rL_h^6(\Omega))',\bH_{h,\Gamma_p}(\vdiv_{\frac65},\Omega)\bigr)\).
{Let \(q_h\in\rL^6_h(\Omega),\) and \(\mathcal{O}\) be a convex bounded domain containing \(\overline{\Omega}.\) Now, define 
\begin{align}\label{def:ell}
    \ell :=
\begin{cases}
|q_h|^4\, q_h, & \text{in } \Omega,\\[2pt]
0, & \text{in } \mathcal{O} \setminus \overline{\Omega}.
\end{cases}
\end{align}
Note that, \(\ell\in\rL^{\frac65}(\mathcal{O}),\) satisfying the following 
\begin{align}\label{est:ell.disc}
\|\ell\|_{0,\frac65,\Omega}=\||q_h|^4q_h\|_{0,\frac65,\Omega}=\|q_h\|^5_{0,6,\Omega}.
\end{align}
It now follows from \cite[Corollary~1]{fromm1993potential} that, there exists a unique \(z\in \rW_{\Gamma_{\bu}}^{1,\frac65}(\mathcal{O})\cap\rW^{2,\frac65}(\mathcal{O})\) satisfying the following boundary value problem
\begin{align}\label{eq:disc.inf-sup.welp}
    \Delta z = \ell \quad \text{in } \Omega,\quad z=0\quad \text{on } \Gamma_{\bu},\quad \nabla z\cdot\bn=0\quad \text{on } \Gamma_p,
\end{align}
and additionally, there exists a positive constant \(C_{\mathsf{reg}}\) (depending only on \(d\) and \(\Omega\)) such that \(z\) satisfies the following  stability estimate (the last equality is due to \eqref{est:ell.disc})
\begin{align}\label{est:z}
\|z\|_{2,\frac65,\mathcal{O}}\leq C_{\mathsf{reg}}\|\ell\|_{0,\frac65,\Omega}=C_{\mathsf{reg}}\|q_h\|^5_{0,6,\Omega}.
\end{align}
Next, set \(\varsigma:=(\nabla z)|_{\Omega}\in\rW^{1,\frac65}(\Omega),\) and note from \eqref{eq:disc.inf-sup.welp} and \eqref{est:z} that 
\begin{align}
    \vdiv(\varsigma)&=\ell=|q_h|^4q_h \quad \text{in } \Omega, \label{est:div.varsigma}\\ \|\varsigma\|_{1,\frac65,\Omega}&\leq\|z\|_{2,\frac65,\Omega}\leq C_{\mathsf{reg}}\|q_h\|^5_{0,6,\Omega}.\label{est:varsigm} 
\end{align}
In turn, let us now define \(\varsigma_h:=\Pi^k_h(\varsigma)\) (\( \Pi_h^k\) being the Raviart--Thomas projection operator satisfying \cite[Equation~5.38, Equation~5.39]{colmenares2020banach}). Further, using the boundedness of the projection operator,  \eqref{est:varsigm}, and finally the continuous injection of \(\rW^{1,\frac65}(\Omega)\) into \(\rL^2(\Omega),\)  we obtain the following estimate (the absorbed constant being independent of \(h\))
\begin{align}
\|\varsigma_h\|_{0,\Omega}=\|\Pi_h^k(\varsigma)\|_{0,\Omega}\le C_{\mathsf{reg}}\|q_h\|^5_{0,6,\Omega}.
\end{align}
In addition, from \cite[Equation~(5.35)]{colmenares2020banach} and \eqref{est:div.varsigma}, we obtain the following commutativity of the divergence operator  and the projection operator
\begin{align}
    \vdiv(\varsigma_h)=\vdiv(\Pi^k_h(\varsigma))=\Pi_h^k(\vdiv(\varsigma))=\Pi_h^k(|q_h|^4q_h),
\end{align}
and further, using \eqref{est:div.varsigma}, and the boundedness of \(\Pi_h^k,\) it follows that
\begin{align}\label{est:div.triang}
\|\vdiv(\varsigma_h)\|_{0,\frac65,\Omega}\leq\|q_h\|^5_{0,6,\Omega}.
\end{align}
Then, it suffices to define \(\mathcal{S}_d(\ell):=\varsigma_h,\) which further implies that \(\cS_{\mathsf{d}}\in\cL\bigl((\rL_h^6(\Omega))',\bH_{h,\Gamma_p}(\vdiv_{\frac65},\Omega)\bigr).\) This completes the rest of the proof.
}
\end{proof}

\noindent
We now consider the following {discrete ADR} system for a fixed $\hat{\bu}_h \in \bV_h$
\begin{subequations}\label{eq:weak.hat.phi.disc}
    \begin{align}
    a_2(\bzeta_h,\bw_h) + b_{2,h}(\bw_h,\varphi_h) + d_2^{\hat\bu_h}(\bw_h, \varphi_h) & = 0 \quad \forall \bw_h \in \bW_h, \label{eq:weak,3.hat.phi.disc}\\
    b_{2,h}(\bzeta_h, \psi_h) - c_{2,h}(\varphi_h,\psi_h) & = H_h(\psi_h) \quad \forall \psi_h \in \Psi_h, \label{eq:weak,4.hat.phi.disc}
    \end{align}
\end{subequations}
and employ the discrete version of Theorem~\ref{th:abstract} to discuss its parameter-robust unique solvability.
\begin{theorem}[Well-posedness of discrete  advection-diffusion-reaction equations]\label{th:discrete-ARD}
Assume that 
 $m \in \rL^{\frac65}(\Omega)$. {Then, for a fixed $\hat{\bu}_h\in \bV_h,$ there {exist}  positive constants \(C_d, C^d_{\alpha}\) (depending on $\alpha$), such that, whenever {\(\|\hat\bu_h\|_{{0,3,\Omega}}\leq 
 \frac{C_d}{2C^d_{\alpha}},\)}} there exists a unique  solution $(\bzeta_h,\varphi_h)\in \bW_h\times\Psi_h$ to the problem \eqref{eq:weak.hat.phi.disc}. 
 Furthermore, there is a positive constant $C^{\mathsf{ADR}}_{\mathsf{d}}$ independent of \(h,\) and  $\varrho_0$, such that 
\begin{equation*}
 \|(\bzeta_h, \varphi_h)\|_{\bW\times\widehat{\Psi}(\cT_h)}  \leq C^{\mathsf{ADR}}_{\mathsf{d}} \| m \|_{0,\frac65,\Omega} .
    \end{equation*}
\end{theorem}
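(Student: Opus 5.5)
The plan mirrors the proof of Theorem~\ref{th:continuous-ARD} at the discrete level. First we treat the discrete reaction--diffusion problem without advection. We seek $(\bzeta_h,\varphi_h)\in\bW_h\times\Psi_h$ with $a_2(\bzeta_h,\bw_h)+b_{2,h}(\bw_h,\varphi_h)=0$ and $b_{2,h}(\bzeta_h,\psi_h)-c_{2,h}(\varphi_h,\psi_h)=H_h(\psi_h)$, and we show it satisfies the hypotheses of Theorem~\ref{th:abstract}. The abstract theorem applies verbatim to the finite-dimensional pair $\bX=\bW_h$, $Y=\Psi_h$ with $Y_c=Y$, the norm $\|\bullet\|_{\bW}\times\|\bullet\|_{\widehat{\Psi}(\cT_h)}$, and the triple norm $\tnorm{(\btau_h,\psi_h)}=\|\btau_h\|_{\bW}+\|\psi_h\|_{\widehat\Psi(\cT_h)}+t|\psi_h|_c$.

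\textbf{Boundedness.} For $a_2$ and $c_{2,h}$ this is Cauchy--Schwarz, exactly as in the continuous case. For $b_{2,h}$ we split $\psi_h=\psi_1+\psi_2$ with $\psi_1\in\rL^6(\Omega)$ and $\psi_2\in\rH^1_{\Gamma_{\bu}}(\cT_h)$. The $\psi_1$ part is bounded by H\"older. For the $\psi_2$ part we integrate by parts elementwise: since $\bw_h\in\mathbb{RT}_k$ has continuous normal components, $\int_\Omega\vdiv\bw_h\,\psi_2=-\sum_K\int_K\bw_h\cdot\nabla\psi_2+\sum_F\int_F\bw_h\cdot\bn\,[\![\psi_2]\!]$. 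The face term is controlled by a discrete trace inequality $h_F^{1/2}\|\bw_h\cdot\bn\|_{0,F}\lesssim\|\bw_h\|_{0,K}$. This gives $b_{2,h}(\bw_h,\psi_h)\lesssim\|\bw_h\|_{\bW}\|\psi_h\|_{\widehat\Psi(\cT_h)}$ with an $h$-, $\alpha$-independent constant; the weights $\alpha^{\mp1/2}$ cancel. Symmetry and semi-definiteness of $a_2$ and $c_{2,h}$ are immediate.

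\textbf{Inf-sup conditions.} The kernel inf-sup for $a_2$ holds with constant $1$, as before, since the discrete kernel consists of $\bzeta_h$ with $\vdiv\bzeta_h\perp\mathbb P_k$. For $\vdiv$ of $\mathbb{RT}_k$ this forces $\vdiv\bzeta_h=0$, so $\|\bzeta_h\|_{\bW}=\alpha^{-1/2}\|\bzeta_h\|_{0,\Omega}$. The inf-sup of $b_{2,h}$ follows from Lemma~\ref{lem:disc.inf-sup:ARD} by the same Fortin-type chain as in the continuous proof, giving $C_{b_2,h}=\|\cS_{\mathsf d}\|^{-1}_{\cL(\Psi_h',\bW_h)}$, which is independent of $\alpha,\varrho_0,h$. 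For the Braess condition we test first with $(\bzeta_h,0)$. We then test with $(\bzero,\psi_h)$, where $\psi_h$ is the $\rL^2$-projection onto $\mathbb P_k(\cT_h)$ of $\mathrm{sgn}(\vdiv\bzeta_h)|\vdiv\bzeta_h|^{1/5}$. Because $\vdiv\bzeta_h\in\mathbb P_k$, we get $b_{2,h}(\bzeta_h,\psi_h)=\|\vdiv\bzeta_h\|^{6/5}_{0,6/5,\Omega}$. The $\rL^6$-stability of this projection on shape-regular meshes gives $\|\psi_h\|_{0,6,\Omega}\lesssim\|\vdiv\bzeta_h\|^{1/5}_{0,6/5,\Omega}$. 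Combining the two tests through the same $s$-argument yields a constant $\gamma_d$ independent of $h,\alpha,\varrho_0$. Theorem~\ref{th:abstract} then gives the discrete analogue of \eqref{eq:A.stab} with a constant $C_d$ built from \eqref{eq:C}.

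\textbf{Advection.} As in \eqref{eq:A:ARD}, we add $d_2^{\hat\bu_h}$ as a perturbation. We must bound $d_2^{\hat\bu_h}(\btau_h,\varphi_h)\le C^d_\alpha\|\hat\bu_h\|_{0,3,\Omega}\tnorm{(\btau_h,\psi_h)}\tnorm{(\bzeta_h,\varphi_h)}$. This needs $\|\varphi_h\|_{0,6,\Omega}\lesssim\|\varphi_h\|_{\widehat\Psi(\cT_h)}$, which follows from a broken Sobolev embedding $\|\psi_2\|_{0,6}\lesssim|\psi_2|_{1,\cT_h}$ for piecewise $\rH^1$ functions vanishing on $\Gamma_{\bu}$ (Buffa--Ortner/Di Pietro--Ern). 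It yields $C^d_\alpha=\alpha^{-1/2}(1+C_{\mathsf{emb}}\alpha^{-1/2})$. Under $\|\hat\bu_h\|_{0,3,\Omega}\le C_d/(2C^d_\alpha)$ the perturbed form keeps inf-sup constant $C_d/2$. Since the system is square and finite-dimensional, injectivity gives existence and uniqueness. Bounding $H_h(\psi_h)\le\|m\|_{0,6/5}\|\psi_h\|_{0,6}$ via the same embedding gives the estimate with $C^{\mathsf{ADR}}_{\mathsf d}=2C_d^{-1}(1+C_{\mathsf{emb}}\alpha^{-1/2})$.

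\textbf{Main obstacle.} I expect the delicate points to be the $h$-uniform bounds in the broken norm: the boundedness of $b_{2,h}$ through jump terms, and the discrete Braess test function, which requires $\rL^6$-stability of the $\rL^2$-projection. I would handle both using local inverse and trace inequalities on shape-regular meshes.
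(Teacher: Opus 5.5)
Your proposal follows the paper's proof essentially step by step. You apply Theorem~\ref{th:abstract} to the advection-free discrete problem, using the discrete inf-sup from Lemma~\ref{lem:disc.inf-sup:ARD} and the two-test-function Braess argument. You then absorb $d_2^{\hat\bu_h}$ as a perturbation under the smallness condition and bound $H_h$ by H\"older, arriving at the same constants $C^d_\alpha$ and $C^{\mathsf{ADR}}_{\mathsf d}$.

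Your extra details are refinements rather than a different route. Projecting $\mathrm{sgn}(\vdiv\bzeta_h)|\vdiv\bzeta_h|^{1/5}$ onto $\mathbb{P}_k(\cT_h)$ repairs the paper's Braess test function, which in general does not lie in $\Psi_h$ when $k\ge 1$. The elementwise integration by parts and the broken Sobolev embedding make explicit what the paper only asserts ``analogously''.
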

\begin{proof}
     The boundedness of \(a_2(\bullet,\bullet),b_{2,h}(\bullet,\bullet),\,c_{2,h}(\bullet,\bullet),\) and  symmetry and positive definiteness of \(a_2(\bullet,\bullet)\) and \(c_{2,h}(\bullet,\bullet),\) and the inf-sup stability of \(a_2(\bullet,\bullet)\) in \(\bZ_h:=\{\btau_h\in\bW_h:b_{2,h}(\btau_h,\psi_h)=0,\,\forall \psi_h\in\Psi_h\}\) follow analogously as in Theorem~\ref{th:continuous-ARD}. Therefore, to complete the rest of the proof, it is enough to prove the discrete inf-sup stability of \(b_{2,h}(\bullet,\bullet)\) and the discrete Braess condition.

\smallskip
\noindent\textbf{Discrete inf-sup condition.} 
From the existence of $\cS_{\mathsf{d}}$ defined in Lemma~\ref{lem:disc.inf-sup:ARD}, it 
    follows that
\begin{align*}
\sup_{\bzero\neq\btau_h \in \bW_h} \!\!\frac{\langle{\vdiv} \btau_h, \psi_h\rangle}{\|\btau_h\|_\bW} &\geq \sup_{0\neq\ell \in \Psi_h'} \frac{\langle{{\vdiv}_{\frac65}} \cS_{\mathsf{d}}(\ell), \psi_h\rangle}{\|\cS_{\mathsf{d}}(\ell)\|_\bW} = \sup_{0\neq\ell \in \Psi_h'} \frac{\langle \ell, \psi_h \rangle}{\|\cS_{\mathsf{d}}(\ell)\|_\bW} \\&\geq \|\cS_{\mathsf{d}}\|^{-1}_{\cL(\Psi_h', \bW_h)} \sup_{0\neq\ell \in \Psi_h'} \frac{\langle \ell, \psi_h \rangle}{\|\ell\|_{\Psi_h'}} = \tilde{C}_b \|\psi_h\|_{\widehat{\Psi}(\cT_h)},
\end{align*}
where $\tilde{C}_b = \|\cS_{\mathsf{d}}\|^{-1}_{\cL(\Psi_h', \bW_h)},$ is independent of \(\alpha,\varrho_0,h\), and hence the discrete stability of inf-sup is achieved.

\smallskip
\noindent\textbf{Discrete Braess condition.} Setting \(\bw_h=\bzeta_h\), \(\psi_h=0,\) and exploiting \(\|\bzeta_h\|_{\bW}=\tnorm{(\bzeta_h,0)}_{\bW\times\widehat{\Psi}(\cT_h)},\) one obtains 
\begin{align}\label{eq:Braess.disc.1st} \sup_{(\bzero,0)\neq(\bw_h,\psi_h)\in\bW_h\times\Psi_h}\frac{a_2(\bzeta_h,\bw_h)+b_{2,h}(\bzeta_h,\psi_h)}{\tnorm{(\bw_h,\psi_h)}_{\bW\times\widehat{\Psi}(\cT_h)}}\geq\frac{\alpha^{-1}\int_\Omega\bzeta_h\cdot\bzeta_h}{\tnorm{(\bzeta_h,0)}_{\bW\times\widehat{\Psi}(\cT_h)}}=\frac{\alpha^{-1}\|\bzeta_h\|_{0,\Omega}^2}{\|\bzeta_h\|_{\bW}}.
    \end{align}
     
Next, {taking the particular values} \(\bw_h=\bzero,\) and \(\psi_h=\mathrm{sgn}(\vdiv\bzeta_h)|\vdiv\bzeta_h|^{\frac15}\), 
one obtains the following 
\begin{align}\label{eq:Braess.disc.2nd}  
(\mathrm{LHS})_{\eqref{eq:Braess.disc.1st}}\geq \frac{a_2(\bzeta_h,\bzero)+b_{2,h}(\bzeta_h,\mathrm{sgn}(\vdiv\bzeta_h)|\vdiv\bzeta_h|^{\frac15})}{\tnorm{(\bzero,\mathrm{sgn}(\vdiv\bzeta_h)|\vdiv\bzeta_h|^{\frac15})}_{\bW\times\widehat{\Psi}(\cT_h)}}\geq\frac12 \frac{\|\vdiv \bzeta_h\|^{\frac65}_{0,\frac65,\Omega}}{\|\vdiv \bzeta_h\|^{\frac15}_{0,\frac65,\Omega}}=\frac12\|\vdiv \bzeta_h\|_{0,\frac65,\Omega}.
\end{align}
Now, let us combine \eqref{eq:Braess.disc.1st} and \eqref{eq:Braess.disc.2nd} to obtain the following
\begin{align}\label{eq:Braess.disc.3rd}
\sup_{(\bzero,0)\neq(\bw_h,\psi_h)\in\bW_h\times\Psi_h}\frac{a_2(\bzeta_h,\bw_h)+b_{2,h}(\bzeta_h,\psi_h)}{\tnorm{(\bw_h,\psi_h)}_{\bW\times\widehat{\Psi}(\cT_h)}}\geq\max\biggl({\frac{\alpha^{-1}\|\bzeta_h\|^2_{0,\Omega}}{\|\bzeta_h\|_{\bW}},\frac12\|\vdiv \bzeta_h\|_{0,\frac65,\Omega}}\biggr).
\end{align}
Denoting \(\gamma:=\frac{\alpha^{-\frac12}\|\bzeta_h\|_{0,\Omega}}{\|\bzeta_h\|_{\bW}}\in[0,1)\), and using \eqref{eq:Braess.disc.3rd}, we establish the following inequality
\begin{align*}\max\biggl({\frac{\alpha^{-1}\|\bzeta_h\|^2_{0,\Omega}}{\|\bzeta_h\|_{\bW}},\|\vdiv \bzeta_h\|_{0,\frac65,\Omega}}\biggr)\geq \|\bzeta_h\|_{\bW}\biggl(\max_{t\in[0,1)}\bigl[\gamma^2,\frac12\sqrt{1-\gamma^2}\bigr]\biggr). \end{align*}
On the other hand, if we  denote \(\gamma_{\mathsf{d}}:=\biggl(\max_{t\in[0,1)}\bigl[\gamma^2,\frac12\sqrt{1-\gamma^2}\bigr]\biggr),\) we can readily conclude that \[\sup_{(\bzero,0)\neq(\bw_h,\psi_h)\in\bW_h\times\Psi_h}\frac{a_2(\bzeta_h,\bw_h)+b_{2,h}(\bzeta_h,\psi_h)}{\tnorm{(\bw_h,\psi_h)}_{\bW\times\widehat{\Psi}(\cT_h)}}\geq \gamma_{\mathsf{d}} \|\bzeta_h\|_{\bW}.\]
An appeal to  Theorem~\ref{th:abstract} now yields the well-posedness of the discrete reaction-diffusion system~\eqref{eq:weak,3h} and \eqref{eq:weak,4h} without the advection part. Additionally, the following estimate holds 
\begin{align}\label{eq:A.stab.disc}
\sup_{(\bzero,0)\neq(\tau_h,\psi_h)\in\bW_h\times\Psi_h}\frac{\cA(\bzeta_h,\varphi_h;\btau_h,\psi_h)}{\tnorm{(\btau_h,\psi_h)}_{\bW\times \widehat{\Psi}(\cT_h)}}\geq C_d\tnorm{(\bzeta_h,\varphi_h)}_{\bW\times \widehat{\Psi}(\cT_h)},
\end{align}
where \(C_d\) is the positive constant from 
\eqref{eq:C},   independent of the model and   discretisation parameters.

Finally, for the well-posedness of the uncoupled discrete ADR equations, for a fixed \(\hat{\bu}_h\in\bV_h,\) we consider another bilinear form \(\mathcal{A}^{\mathsf{ADR}}_{\mathsf{d}}:\bW_h\times\Psi_h\to \mathbb{R}\) defined by 
\begin{align}\label{eq:A:ARD.disc}\mathcal{A}_{\mathsf{d}}^{\mathsf{ADR}}(\bzeta_h,\varphi_h;\btau_h,\psi_h):=\mathcal{A}(\bzeta_h,\varphi_h;\btau_h,\psi_h)+d_2^{\hat{\bu}_h}(\btau_h,\varphi_h),\quad \forall (\bzeta_h,\varphi_h), (\btau_h,\psi_h)\in\bW_h\times \Psi_h.\end{align}
Thus,  to show the well-posedness of \eqref{eq:weak,3h}-\eqref{eq:weak,4h}, we now analyse  the problem: Find \((\bzeta_h,\varphi_h)\in\bW_h\times \Psi_h\) such that \[\mathcal{A}_{\mathsf{d}}^{\mathsf{ADR}}(\bzeta_h,\varphi_h;\btau_h,\psi_h)=H_h(\psi_h),\quad \forall \psi_h\in \Psi_h.\] 
From the boundedness of \(d_2^{\hat{\bu}_h}(\bullet,\bullet)\),{\begin{align}\label{eq:d2.bound.disc}d_2^{\hat{\bu}_h}(\btau_h,\varphi_h)\leq C^d_{\alpha}\|\hat{\bu}_h\|_{0,3,\Omega}\,\tnorm{(\btau_h,\psi_h)}_{\bW\times\widehat{\Psi}(\cT_h)}\tnorm{(\bzeta_h,\varphi_h)}_{\bW\times\widehat{\Psi}(\cT_h)},\end{align}} 
{where $C^d_{\alpha}=\alpha^{-\frac12}(1+{C_{\mathsf{emb}}}\alpha^{-\frac12})$}, combined with \eqref{eq:A.stab.disc} and \eqref{eq:A:ARD.disc}, we can assert that
\[\sup_{(\bzero,0)\neq(\btau_h,\psi_h)\in {\bW_h\times\Psi_h}}\frac{\mathcal{A}_{\mathsf{d}}^{\mathsf{ADR}}(\bzeta_h,\varphi_h;\btau_h,\psi_h)}{\tnorm{(\btau_h,{\psi_h})}_{\bW\times\widehat{\Psi}(\cT_h)}}\geq\bigl[C_d-C^d_{\alpha}\|\hat{\bu}_h\|_{0,3,\Omega}\bigr]\tnorm{(\bzeta_h,\varphi_h)}_ {\bW\times\widehat{\Psi}(\cT_h)}, \]
for all \((\bzeta_h,\varphi_h)\in \bW_h\times \Psi_h.\) Hence, {under the assumption that  
$\|\hat{\bu}_h\|_{0,3,\Omega}\leq \frac{C_d}{2C^d_{\alpha}}$, we} arrive at 
\begin{equation}\label{eq:ARD.disc}
\sup_{(\bzero,0)\neq(\btau_h,\psi_h)\in {\bW_h\times\Psi_h}}\frac{\mathcal{A}_{\mathsf{d}}^{\mathsf{ADR}}(\bzeta_h,\varphi_h;\btau_h,\psi_h)}{\tnorm{(\btau_h,\psi_h)}_ {\bW\times\widehat{\Psi}(\cT_h)}}\geq\frac{C_d}{2} \tnorm{(\bzeta_h,\varphi_h)}_ {\bW\times \widehat{\Psi}(\cT_h)}.
\end{equation}
Therefore, the discrete $\mathsf{ADR}$ system~\eqref{eq:weak,3h}-\eqref{eq:weak,4h} is well-posed. 
Additionally, invoking \eqref{eq:A:ARD.disc} in \eqref{eq:ARD.disc}, and  applying H\"{o}lder's inequality, 
gives the dependence on data {\(\tnorm{(\bzeta_h,\varphi_h)}_{\bW\times\widehat{\Psi}(\cT_h)}\leq C^{\mathsf{ADR}}_{\mathsf{d}}\|m\|_{0,\frac65,\Omega},\)} where {\(C^{\mathsf{ADR}}_{\mathsf{d}}:={2}{C_d}^{-1}(1+C_{\mathsf{emb}}\alpha^{-\frac12})\)}, which concludes the proof. 
\end{proof}
}
\subsection{Fixed-point analysis in discrete framework}
{Analogous to the continuous case (cf. \eqref{Operator-S.V->V}), we now define the fixed-point operator in the discrete framework as follows  
\begin{equation}\label{Operator-S_h.Vh->Vh}
\cS_h: \bV_h  \to  \bV_h, \quad \hat{\bu}_h \mapsto \cS_h(\hat{\bu}_h):= \cS_{1,h}^{\mathsf{DF}}(\cS_{2,h}^{\mathsf{ADR}}(\hat{\bu}_h)),\quad \hat{\bu}_h\in\bV_h,
\end{equation}
where the maps~\(\cS_{1,h}^{\mathsf{DF}_1}\) and \( \cS_{2,h}^{\mathsf{ADR}_2}\) are defined over the respective discrete spaces as follows
\[ \cS_h^{\mathsf{DF}}:\Psi_h\to \bV_h \times \rQ_h, \quad \hat{\varphi}_h \mapsto \cS_h^{\mathsf{DF}}(\hat{\varphi}_h)= (\cS^{\mathsf{DF}}_{1,h}(\hat{\varphi}_h), \cS^{\mathsf{DF}}_{2,h}(\hat{\varphi}_h)):= (\bu_h, p_h), \]
where $( \bu_h,p_h) \in \bV_h \times \rQ_h$ is the unique solution of the Darcy--Forchheimer equations as stated in Theorem~\ref{th:discrete-DF}.
Further, we define the solution operator associated with the mixed ADR equations as
\[ \cS_h^{\mathsf{ADR}}: \bV_h \to  \bW_h\times \Psi_h, \quad \hat{\bu}_h \mapsto \cS_h^{\mathsf{ADR}}(\hat{\bu}_h)=(\cS^{\mathsf{ADR}}_{1,h}(\hat{\bu}_h),\cS^{\mathsf{ADR}}_{2,h}(\hat{\bu}_h)):= ( \bzeta_h, \varphi_h),\]
where $(\bzeta_h, \varphi_h)$ is the unique solution of the ADR equations, as stated in Theorem~\ref{th:discrete-ARD}.

A parallel set of arguments can be applied here likewise the continuous case to show that the discrete fixed-point map~\(\cS_h\) follows the subsequent properties
\begin{itemize}
    \item For a positive real number \(0<\tilde r<\frac12,\) the map \(\cS_h\) preserves the ball-mapping property, that is, \(\cS_h(\mathrm{B}^{\tilde r}_{\bzero})\subseteq\mathrm{B}^{\tilde r}_{\bzero}\), under the {small data assumption}   $${C^{\mathsf{DF}}_{\mathsf{d},\bu_h}}
    \bigl({\|\bf(0)\|_{0,\Omega}}+{C^{\mathsf{ADR}}_{\mathsf{d}}\alpha_1}\|m\|_{0,\frac65,\Omega}\bigr)  \leq \tilde r<\frac12,$$
where $C^{\mathsf{DF}}_{\mathsf{d},\bu_h}$ comes from \eqref{est:disc.uh.bound}.
    \item \(\cS_h\) is Lipschitz continuous with the Lipschitz constant $L^{\mathsf{d}}_{\cS}:=\frac{2C^d_{\alpha}}{C_d(C^d_{\alpha}-\rF^{-\frac13})}\|m\|_{0,\frac65,\Omega}{\alpha}_{\mathsf{DF}}^{-\frac12}\alpha_1.$    
\end{itemize}
\noindent
We next state the discrete stability theorem for the fully-coupled discrete $\mathsf{DF}$-$\mathsf{ADR}$ system similar to the continuous case as in Theorem~\ref{th:wellposedness}.}
\begin{theorem}[Well-posedness of the fully-coupled discrete problem]\label{th:wellposedness.disc}
 Suppose that $${C^{\mathsf{DF}}_{\mathsf{d},\bu_h}}\bigl({\|\bf(0)\|_{0,\Omega}}+{C^{\mathsf{ADR}}_{\mathsf{d}}\alpha_1}\|m\|_{0,\frac65,\Omega}\bigr)  <\frac12,$$ and
that $\displaystyle L^{\mathsf{d}}_{\cS}<1$.  Then, the coupled problem \eqref{eq:weak_Galerkin} has a unique solution $(\bu_h,p_h,\bzeta_h,\varphi_h) \in \bV_h \times \rQ_h\times \bW_h \times \Psi_h$. Moreover, there are 
{{\begin{align*}
   \|\bu_h\|_{\bV_h} + \|p_h\|_{\widehat{\rQ}(\cT_h)} + \|\bzeta_h\|_{\bW_h} + &\|\varphi_h\|_{\widehat{\Psi}(\cT_h)} 
  \leq  C^{\mathsf{ADR}}_{\mathsf{d}}\|m\|_{0,\frac65,\Omega}+C^{\mathsf{DF}}_{\mathsf{d},\bu_h}\max \bigl[{C^{\mathsf{DF}}_{\mathsf{d},\bu_h}}\bigl[{\|\bf(0)\|_{0,\Omega}}\\&\quad + {C^{\mathsf{ADR}}_{\mathsf{d}}\alpha_1}\|m\|_{0,\frac65,\Omega} \bigr],\bigl({{C^{\mathsf{DF}}_{\mathsf{d},\bu_h}}}\bigl[{\|\bf(0)\|_{0,\Omega}}+{C^{\mathsf{ADR}}_{\mathsf{d}}\alpha_1}\|m\|_{0,\frac65,\Omega}\bigr]\bigr)^2\bigr], 
\end{align*}}}
where the constants do not depend on the model parameters $\kappa,\rF,\varrho_0,$ nor the discretisation parameter \(h.\) 
\end{theorem}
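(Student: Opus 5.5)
The plan mirrors the proof of Theorem~\ref{th:wellposedness}, with Theorems~\ref{th:discrete-DF} and~\ref{th:discrete-ARD} in place of their continuous counterparts. A solution of \eqref{eq:weak_Galerkin} is exactly a fixed point of $\cS_h$ in \eqref{Operator-S_h.Vh->Vh}: if $\bu_h=\cS_h(\bu_h)$, set $(\bzeta_h,\varphi_h)=\cS_h^{\mathsf{ADR}}(\bu_h)$ and $p_h=\cS^{\mathsf{DF}}_{2,h}(\varphi_h)$. Conversely, any solution produces such a fixed point, by uniqueness of the decoupled discrete problems. So I will apply Banach's fixed-point theorem on the closed ball $\mathrm{B}^{\tilde r}_{\bzero}\subset\bV_h$ with $\tilde r:=C^{\mathsf{DF}}_{\mathsf{d},\bu_h}\bigl(\|\bf(0)\|_{0,\Omega}+C^{\mathsf{ADR}}_{\mathsf{d}}\alpha_1\|m\|_{0,\frac65,\Omega}\bigr)<\frac12$. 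Since $\bV_h$ is finite dimensional, this ball is complete.

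\textbf{Ball mapping.} Take $\hat\bu_h\in\mathrm{B}^{\tilde r}_{\bzero}$. First I check that $\|\hat\bu_h\|_{0,3,\Omega}$ satisfies the smallness hypothesis of Theorem~\ref{th:discrete-ARD}. Through the $\rF^{\frac13}\bL^3$ component of the $\bV$-norm, $\|\hat\bu_h\|_{0,3,\Omega}\le \rF^{-\frac13}\tilde r$. This is the same implicit compatibility between $\tilde r$ and $C_d/(2C^d_\alpha)$ that is used in the continuous case, and I will state it explicitly as part of the small-data assumption. Theorem~\ref{th:discrete-ARD} then gives $\|\varphi_h\|_{\widehat\Psi(\cT_h)}\le C^{\mathsf{ADR}}_{\mathsf{d}}\|m\|_{0,\frac65,\Omega}$. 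Combining \eqref{eq:lip.bound.f} with the embedding $\|\varphi_h\|_{0,6,\Omega}\lesssim\|\varphi_h\|_{\widehat\Psi(\cT_h)}$ and with \eqref{est:disc.uh.bound} yields $\|\cS_h(\hat\bu_h)\|_{\bV}\le\tilde r$.

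\textbf{Contraction.} I split the Lipschitz estimate into two factors. For $\cS^{\mathsf{DF}}_{1,h}$, I test the difference of \eqref{eq:weak,1.hat.u.disc} for two data $\varphi_{1,h},\varphi_{2,h}$ with $\bv_h=\bu_{1,h}-\bu_{2,h}$. Since $\bu_{1,h}-\bu_{2,h}$ is discretely divergence free by \eqref{eq:weak,2.hat.u.disc}, the pressure term drops. Uniform strong monotonicity of $a_1+d_1$ (as in \cite{dhpr_sinum26}) together with \eqref{eq:lip.bound.f} then gives the bound by $\|\varphi_{1,h}-\varphi_{2,h}\|$. For $\cS^{\mathsf{ADR}}_{2,h}$, I subtract the two discrete ADR systems and insert the splitting $d_2^{\bu_1}(\bw,\varphi_1)-d_2^{\bu_2}(\bw,\varphi_2)=d_2^{\bu_1-\bu_2}(\bw,\varphi_2)+d_2^{\bu_2}(\bw,\varphi_1-\varphi_2)$. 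Applying \eqref{eq:A.stab.disc} and \eqref{eq:d2.bound.disc}, and absorbing the second term using the smallness of $\|\bu_{2,h}\|_{0,3,\Omega}$, I obtain a Lipschitz bound proportional to $\|m\|_{0,\frac65,\Omega}$. The product of the two constants is $L^{\mathsf{d}}_{\cS}<1$, so Banach's theorem gives existence and uniqueness of the fixed point in the ball.

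\textbf{A priori bound.} The estimate follows by summing three pieces: the bound of Theorem~\ref{th:discrete-ARD} for $(\bzeta_h,\varphi_h)$, and the bound of Theorem~\ref{th:discrete-DF} for $(\bu_h,p_h)$ with $\|\bf(\varphi_h)\|_{0,\Omega}\le\|\bf(0)\|_{0,\Omega}+C^{\mathsf{ADR}}_{\mathsf{d}}\alpha_1\|m\|_{0,\frac65,\Omega}$ inserted into the $\max\{\cdot,\cdot^2\}$ expression. All constants inherited from Theorems~\ref{th:discrete-DF} and~\ref{th:discrete-ARD} are independent of $\kappa,\rF,\varrho_0,h$.

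The main obstacle is the contraction step. The discrete monotonicity constant and the discrete ADR stability constant $C_d$ must be $h$-independent, and the absorption requires $C^d_\alpha\|\bu_{2,h}\|_{0,3,\Omega}<C_d$, which ties the smallness of data to $\alpha$. I will handle this exactly as in Lemma~\ref{lemma:continuity-S}, relying on the $h$-uniform discrete inf-sup and Braess constants from Lemma~\ref{lem:disc.inf-sup:ARD}.
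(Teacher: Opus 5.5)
Your proposal follows the paper's route: the paper transfers the continuous argument to the discrete setting, namely ball-mapping under the small-data bound, Lipschitz continuity of $\cS_h$ with constant $L^{\mathsf{d}}_{\cS}$, Banach's theorem, and summation of the bounds of Theorems~\ref{th:discrete-DF} and~\ref{th:discrete-ARD}. Your explicit compatibility condition $\rF^{-\frac13}\tilde r\le C_d/(2C^d_\alpha)$ makes precise a hypothesis the paper leaves implicit but needs for Theorem~\ref{th:discrete-ARD} to apply on the ball.

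Two small repairs are needed, both of which the paper shares. First, your identity for $d_2^{\bu_1}(\bw,\varphi_1)-d_2^{\bu_2}(\bw,\varphi_2)$ drops the cross term $d_2^{\bu_1-\bu_2}(\bw,\varphi_1-\varphi_2)$; this is the same defect as the paper's manipulation in Lemma~\ref{lemma:continuity-S}. Use $d_2^{\bu_1-\bu_2}(\bw,\varphi_1)+d_2^{\bu_2}(\bw,\varphi_1-\varphi_2)$ instead, which changes nothing downstream since $\varphi_1$ obeys the same bound $C^{\mathsf{ADR}}_{\mathsf{d}}\|m\|_{0,\frac65,\Omega}$. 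Second, the Lipschitz step for $\cS^{\mathsf{DF}}_{1,h}$ needs $\|\bf(\varphi_1)-\bf(\varphi_2)\|_{0,\Omega}\le\alpha_1\|\varphi_1-\varphi_2\|_{0,6,\Omega}$. That is a genuine Lipschitz assumption on $\bf$, stronger than the growth bound \eqref{eq:lip.bound.f} as written.
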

}
In the next section, we estimate the error between the exact and approximate solutions in the proposed weighted-norms, showing that the convergence is robust with respect to the material parameters.
\subsection{ {\it{A priori}} error analysis}\label{sec:apriori}
We start this section by proving the following parameter-robust \textit{a priori} error estimate for the coupled {system \eqref{eq:strong}. 
\begin{theorem}[Quasi-optimality]\label{th:Cea}
    Let \((\bu,p,\bzeta,\varphi)\) uniquely solve the system~\eqref{eq:weak} and \((\bu_h,p_h,\bzeta_h,\varphi_h)\) uniquely solve the system~\eqref{eq:weak_Galerkin}. Additionally, suppose that the assumptions of Theorem~\ref{th:wellposedness} and Theorem~\ref{th:wellposedness.disc} hold. Also, suppose that the following relation holds
    \[
       \rF^{-\frac13}(1+C_{\mathsf{emb}}\alpha^{-\frac12})C_c^{\mathsf{ADR}}C_{\mathsf{d}}^{\mathsf{ADR}}\|m\|_{0,\frac65,\Omega}\leq\frac12,
    \]
     with \(\alpha_{\mathsf{DF}}\) being the uniform strong monotonicity constant (cf. \cite[Equation~(4.9)]{dhpr_sinum26}). 
    Then there exists a positive constant \(C_{\mathsf{C\acute{e}a}}\) such that the following quasi-optimal error estimate holds 
    \begin{align}\label{eq:Cea}
        \|\bu-\bu_h\|_{\bV}&+\|p-p_h\|_{\widehat{\rQ}(\cT_h)}+\|\bzeta-\bzeta_h\|_{\bW}+\|\varphi-\varphi_h\|_{\widehat{\Psi}(\cT_h)}\nonumber\\
        &\qquad \leq C_{\mathsf{C\acute{e}a}}\bigl(\mathrm{dist}(\bu,\bV_h)+{(\mathrm{dist}(\bu,\bV_h))^2}+\mathrm{dist}(p,\rQ_h)+\mathrm{dist}(\bzeta,\bW_h)+\mathrm{dist}(\varphi,\Psi_h) \bigr),
    \end{align}
    where \(C_{\mathsf{C\acute{e}a}}\) is independent of \(\kappa,\rF,\varrho_0,\) and \(h.\) 
\end{theorem}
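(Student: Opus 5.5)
We split the total error into a Darcy--Forchheimer part and an ADR part, bound each by a Strang/Céa-type argument using the discrete stability results already available, and then close the coupling with the smallness hypothesis. For arbitrary $(\bv_h,q_h,\bw_h,\psi_h)\in\bV_h\times\rQ_h\times\bW_h\times\Psi_h$ we write $\bu-\bu_h=(\bu-\bv_h)+(\bv_h-\bu_h)$, and analogously for the other unknowns. The discrete parts $\be^{\bu}_h:=\bv_h-\bu_h$, $e^p_h:=q_h-p_h$, $\be^{\bzeta}_h:=\bw_h-\bzeta_h$, $e^{\varphi}_h:=\psi_h-\varphi_h$ are then controlled through the Galerkin error equations. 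These are obtained by testing \eqref{eq:weak} with discrete functions; this is legitimate since $\bV_h\subset\bV$ and $\bW_h\subset\bW$. We then subtract \eqref{eq:weak_Galerkin}. On $\Psi_h$ and $\rQ_h$ the continuous forms $b_i$ act on discrete functions in the same way as $b_{i,h}$, up to sign conventions, and the norms $\widehat{\rQ}(\cT_h)$, $\widehat{\Psi}(\cT_h)$ dominate the corresponding continuous norms, so no consistency term beyond best-approximation terms appears.

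\textbf{ADR block.} We apply the discrete inf-sup estimate \eqref{eq:ARD.disc} for $\mathcal{A}^{\mathsf{ADR}}_{\mathsf{d}}$ with $\hat\bu_h=\bu_h$ to $(\be^{\bzeta}_h,e^{\varphi}_h)$. The numerator splits into three pieces. The first is $\mathcal{A}(\bzeta-\bw_h,\varphi-\psi_h;\btau_h,\psi_h)$, which is bounded by the continuity of $a_2,b_2,c_2$ by a constant times $\|\bzeta-\bw_h\|_{\bW}+\|\varphi-\psi_h\|_{\widehat\Psi(\cT_h)}$. The second is the advection consistency term $d_2^{\bu_h}(\btau_h,\varphi-\psi_h)$, bounded via \eqref{eq:d2.bound.disc}. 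The third is the coupling term $d_2^{\bu}(\btau_h,\varphi)-d_2^{\bu_h}(\btau_h,\varphi)=\int_\Omega\alpha^{-1}(\bu-\bu_h)\cdot\btau_h\,\varphi$, which generalised Hölder bounds by $(1+C_{\mathsf{emb}}\alpha^{-\frac12})\rF^{-\frac13}\|\bu-\bu_h\|_{\bV}\|\varphi\|_{\Psi}\tnorm{(\btau_h,\psi_h)}$. Inserting $\|\varphi\|_\Psi\le C_c^{\mathsf{ADR}}\|m\|_{0,\frac65,\Omega}$ from \eqref{eq:bound.phi.psi} and multiplying by $2/C_d$ (recall $C_{\mathsf{d}}^{\mathsf{ADR}}$ contains this factor), the coupling contribution to the ADR error is at most
\[
\rF^{-\frac13}(1+C_{\mathsf{emb}}\alpha^{-\frac12})C_c^{\mathsf{ADR}}C_{\mathsf{d}}^{\mathsf{ADR}}\|m\|_{0,\frac65,\Omega}\,\|\bu-\bu_h\|_{\bV}\le\tfrac12\|\bu-\bu_h\|_{\bV},
\]
by the smallness hypothesis of the statement. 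The triangle inequality then gives
\[
\|\bzeta-\bzeta_h\|_{\bW}+\|\varphi-\varphi_h\|_{\widehat\Psi(\cT_h)}\lesssim\mathrm{dist}(\bzeta,\bW_h)+\mathrm{dist}(\varphi,\Psi_h)+\tfrac12\|\bu-\bu_h\|_{\bV}.
\]

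\textbf{DF block.} We follow the quasi-optimality argument of \cite{dhpr_sinum26}. Restrict to the discrete kernel and use uniform strong monotonicity of $a_1+d_1$ (constant $\alpha_{\mathsf{DF}}$) together with the local Lipschitz continuity of $d_1$, whose constant is uniform because $\|\bu\|_{\bV},\|\bu_h\|_{\bV}<\frac12$ by Theorems~\ref{th:wellposedness} and \ref{th:wellposedness.disc}. Then recover the pressure error from the discrete inf-sup condition for $b_{1,h}$. The right-hand side difference $F^{\varphi}-F^{\varphi_h}$ is bounded by \eqref{eq:lip.bound.f} in its Lipschitz form, giving a term $\alpha_1\|\varphi-\varphi_h\|_{0,6,\Omega}$, which is controlled by the $\widehat\Psi(\cT_h)$ error. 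The nonlinear Forchheimer term, expanded around $\bv_h$, produces the quadratic contribution $(\mathrm{dist}(\bu,\bV_h))^2$, as in the single-physics estimate. This yields
\[
\|\bu-\bu_h\|_{\bV}+\|p-p_h\|_{\widehat\rQ(\cT_h)}\lesssim\mathrm{dist}(\bu,\bV_h)+\mathrm{dist}(\bu,\bV_h)^2+\mathrm{dist}(p,\rQ_h)+\|\varphi-\varphi_h\|_{\widehat\Psi(\cT_h)}.
\]

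\textbf{Closing the coupling (main obstacle).} Substituting the ADR bound into the DF bound leaves $\|\bu-\bu_h\|_{\bV}$ on the right with coefficient $\tfrac12$ times the DF constant ($\sim\alpha_1\alpha_{\mathsf{DF}}^{-1}$). The estimate closes only if this product is below one. The plan is to absorb it using the contraction hypotheses $L_{\cS},L^{\mathsf d}_{\cS}<1$, which carry exactly these factors. If that is not sufficient, we instead substitute in the reverse order, DF into ADR, and absorb $\|\varphi-\varphi_h\|$ there. A second point to check is that $\rF$ enters only through the factor $\rF^{-\frac13}$ already included in the hypothesis, so that $C_{\mathsf{C\acute{e}a}}$ is independent of $\kappa,\rF,\varrho_0,h$. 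Finally, taking the infimum over $(\bv_h,q_h,\bw_h,\psi_h)$ gives \eqref{eq:Cea}.
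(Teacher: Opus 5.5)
Your decomposition and both block estimates are the same as the paper's. The ADR error goes through \eqref{eq:ARD.disc} with $\hat\bu_h=\bu_h$, with the coupling term $\int_\Omega\alpha^{-1}(\bu-\bu_h)\cdot\btau_h\,\varphi$ controlled via $\|\varphi\|_\Psi\le C^{\mathsf{ADR}}_c\|m\|_{0,\frac65,\Omega}$. The velocity error comes from strong monotonicity on the discrete kernel, which is where the quadratic term appears, and the pressure error from the discrete inf-sup condition.

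The genuine gap is the closing step, which you isolate correctly but do not resolve. After substitution, $\|\bu-\bu_h\|_{\bV}$ reappears on the right with coefficient
\[
K:=(1+C_{\mathsf{emb}}\alpha^{-\frac12})\frac{\alpha_1}{\alpha_{\mathsf{DF}}}\,\rF^{-\frac13}(1+C_{\mathsf{emb}}\alpha^{-\frac12})C^{\mathsf{ADR}}_cC^{\mathsf{ADR}}_{\mathsf{d}}\|m\|_{0,\frac65,\Omega}.
\]
The hypothesis of the theorem only gives $K\le\tfrac12(1+C_{\mathsf{emb}}\alpha^{-\frac12})\alpha_1\alpha_{\mathsf{DF}}^{-1}$, and nothing makes this smaller than one. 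The paper's proof has exactly this defect. Passing from \eqref{eq:cea.u} to \eqref{eq:final.err.est.for.u}, it divides by $1-\rF^{-\frac13}(1+C_{\mathsf{emb}}\alpha^{-\frac12})C^{\mathsf{ADR}}_cC^{\mathsf{ADR}}_{\mathsf{d}}\|m\|_{0,\frac65,\Omega}$, silently dropping the Darcy--Forchheimer factor $(1+C_{\mathsf{emb}}\alpha^{-\frac12})\alpha_1/\alpha_{\mathsf{DF}}$. So you have found the real weak point.

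Neither of your remedies repairs it as written.

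\textbf{Reversing the substitution order.} This changes nothing. From $x\le a+\epsilon y$ and $y\le b+\delta x$, either substitution leaves the same gain $\epsilon\delta$ in front of the unknown. Absorbing $\|\varphi-\varphi_h\|_{\widehat{\Psi}(\cT_h)}$ instead therefore again requires $K<1$.

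\textbf{Invoking $L_{\cS},L^{\mathsf d}_{\cS}<1$.} This fails with these constants as the paper defines them. They do not carry ``exactly these factors''. They contain $\alpha_{\mathsf{DF}}^{-1/2}$ rather than $\alpha_{\mathsf{DF}}^{-1}$. They contain the ratio $C^c_\alpha/(C^c_\alpha-\rF^{-1/3})$ (resp.\ with $C^d_\alpha$) rather than $\rF^{-1/3}C^{\mathsf{ADR}}_c$. Each contains only one of $C_c$, $C_d$. Hence they imply nothing about $K$.

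Your intuition can be made rigorous, but only after re-deriving both Lipschitz constants in the same way as your coupling bound. That means using the same Darcy--Forchheimer factor and the same H\"older constant, together with the stability factor $2/C_c$ (resp.\ $2/C_d$) times the bound $C^{\mathsf{ADR}}_c\|m\|_{0,\frac65,\Omega}$ (resp.\ $C^{\mathsf{ADR}}_{\mathsf d}\|m\|_{0,\frac65,\Omega}$) on the frozen concentration. Since $C_cC^{\mathsf{ADR}}_c=C_dC^{\mathsf{ADR}}_{\mathsf d}=2(1+C_{\mathsf{emb}}\alpha^{-\frac12})$, the coupling coefficient of your argument (before you enlarge $2/C_d$ to $C^{\mathsf{ADR}}_{\mathsf d}$) is then exactly the geometric mean of the two contraction constants. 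Both being below one then closes the estimate.

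Otherwise, the direct fix is to strengthen the hypothesis to $K\le\tfrac12$. The statement's mention of $\alpha_{\mathsf{DF}}$ right after the hypothesis suggests this was intended. With it, your absorption is immediate and the rest of your argument goes through.
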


\begin{proof}
    Let \(\varphi\in\Psi\) solve the system~\eqref{eq:weak.hat.phi} for \(\hat\bu=\bu\in\bV,\) and \(\varphi_h\in\Psi_h\) solve  \eqref{eq:weak.hat.phi.disc} for \(\hat\bu_h=\bu_h\in\bV_h.\) Subtracting \eqref{eq:weak.fixed.u.hat.disc} from \eqref{eq:weak.fixed.u.hat}, with \(\hat\varphi=\varphi\) and \(\hat\varphi_h=\varphi_h,\) we obtain the following system of equations 
    \begin{subequations}
        {\begin{align}
         a_1(\bu-\bu_h,\bv_h) + d_1(\bu; \bu,\bv_h) - d_1(\bu_h; \bu_h,\bv_h) +  b_{1,h}(\bv_h,p-p_h) & = F^{\varphi}(\bv_h)-F^{\varphi_h}(\bv_h) ,\label{eq:EquationCea}\\
    b_{1,h}(\bu-\bu_h,q_h) & = 0,
    \end{align}}
    \end{subequations}
    for all \(\bv_h\in\bV_h\) and \(q_h\in\rQ_h.\)
{  
{Let $\bu_h^*\in\bV_h$ such that $\bu_h-\bu_h^*\in\{\bv_h\in\bV_h:b_{1,h}(\bv_h,q_h)=0,\, \forall q_h\in\rQ_h\}.$}
    From the uniform strong monotonicity of \(a_1(\bullet,\bullet)+d_1(\bullet;\bullet,\bullet)\) on the kernel of \(b_{1,h}\) (see \cite[Equation~3.8]{dhpr_sinum26}),  we deduce 
    \begin{align}
        \alpha_{\mathsf{DF}}\|\bu_h-\bu_h^*\|^2_{\bV}\leq a_1(\bu_h,\bu_h-\bu_h^*)-a_1(\bu_h^*,\bu_h-\bu_h^*)+d_1(\bu_h;\bu_h,\bu_h-\bu_h^*)-d_1(\bu_h^*;\bu_h^*,\bu_h-\bu_h^*).
    \end{align}
    Further, adding and subtracting \(a_1(\bu,\bu_h-\bu_h^*)+d_1(\bu;\bu,\bu_h-\bu_h^*)\) to the above inequality, using \eqref{eq:EquationCea}, \eqref{eq:lip.bound.f}, and the local Lipschitz continuity of \(a_1(\bullet,\bullet)+d_1(\bullet;\bullet,\bullet)\) (see \cite[Equation~3.7]{dhpr_sinum26}), {and lastly using the triangle inequality} the following bound is obtained
\begin{align*}\alpha_{\mathsf{DF}}\|\bu_h-\bu_h^*\|_{\bV}&\leq\|p-p_h\|_{\widehat{\rQ}(\cT_h)}+\|F^{{\varphi}}-F^{{\varphi}_h}\|_{\bV'}+(1+\|\bu\|_{\bV}+\|\bu_h^*\|_{\bV})\|\bu-\bu_h^*\|_{\bV}\\
    &\leq \|p-p_h\|_{\widehat{\rQ}(\cT_h)} + (1+C_{\mathsf{emb}}\alpha^{-\frac12})\alpha_1\|{\varphi}-{\varphi}_h\|_{\widehat{\Psi}(\cT_h)}+(1+\|\bu\|_{\bV}+\|\bu_h^*\|_{\bV})\|\bu-\bu_h^*\|_{\bV}\\
    &\leq{\|p-p_h\|_{\widehat{\rQ}(\cT_h)} + (1+C_{\mathsf{emb}}\alpha^{-\frac12})\alpha_1\|{\varphi}-{\varphi}_h\|_{\widehat{\Psi}(\cT_h)}+(1+2\|\bu\|_{\bV})\|\bu-\bu_h^*\|_{\bV}+\|\bu-\bu_h^*\|_{\bV}^2}.
    \end{align*}
    Finally, using triangle inequality and the fact that \({\|\bu-\bu_h^*\|_{\bV}}\leq(1+\frac1{\beta_{\mathsf{d}}})\,\mathrm{dist}(\bu,\bV_h)\) (cf. \cite[Theorem~2.6]{gatica2014simple}), where \(\beta_{\mathsf{d}}\) is the continuous inf-sup constant corresponding to the Darcy--Forchheimer model, we arrive at the following error estimate for the velocity
\begin{align}\label{eq:error.est.u-uh}\|\bu-\bu_h\|_{\bV}&\leq\frac{1}{\alpha_{\mathsf{DF}}}\mathrm{dist}(p,\rQ_h)+(1+C_{\mathsf{emb}}\alpha^{-\frac12})\frac{\alpha_1}{\alpha_{\mathsf{DF}}}\|{\varphi}-{\varphi_h}\|_{\widehat{\Psi}(\cT_h)}\nonumber\\ \nonumber&+[1+\frac1{\alpha_{\mathsf{DF}}}(1+\frac1{\beta_{\mathsf{d}}})(1+\|\bu\|_{\bV})]\mathrm{dist}(\bu,\bV_h)+{\frac1{\alpha_{\mathsf{DF}}}(1+\frac1{\beta_{\mathsf{d}}})\mathrm{dist}(\bu,\bV_h)}\\
    &\leq C_G\bigl(\mathrm{dist}(\bu,\bV_h)+{\mathrm{dist}(\bu,\bV_h)^2}+\mathrm{dist}(p,\rQ_h)\bigr)+(1+C_{\mathsf{emb}}\alpha^{-\frac12})\frac{\alpha_1}{\alpha_{\mathsf{DF}}}\|\varphi-{\varphi}_h\|_{\widehat{\Psi}_h},\end{align}
    where {\(C_G:=
    \max\bigl({1+\frac1{\alpha_{\mathsf{DF}}}(1+\frac1{\beta_{\mathsf{d}}})(1+\|\bu\|_{\bV})},{1+\frac1{\alpha_{\mathsf{DF}}}(1+\frac1{\beta_{\mathsf{d}}})},\frac1{\alpha_{\mathsf{DF}}}\bigr).\)}
    Next, for the pressure estimate, let us consider an arbitrary element \(p_h^*\in\rQ_h,\) use \eqref{eq:EquationCea}, and the discrete inf-sup stability of the pressure, and lastly, Lipschitz-continuity of \(a_1(\bullet,\bullet)+d_1(\bullet;\bullet,\bullet)\) to conclude the following
    \begin{align*}
        \beta_{\mathsf{d}}\|p_h-p_h^*\|_{\widehat{\rQ}(\cT_h)}&\leq\sup_{\bzero\neq\bv_h\in\bV_h}\frac{b_{1,h}(\bv_h,p_h-p_h^*)}{\|\bv_h\|_{\bV}}\\
        &\leq \sup_{\bzero\neq\bv_h\in\bV_h}\frac{b_{1,h}(\bv_h,p-p_h^*)+b_{1,h}(\bv_h,p_h-p)} {\|\bv_h\|_{\bV}}\leq \sup_{\bzero\neq\bv_h\in\bV_h}\frac{b_{1,h}(\bv_h,p-p_h^*)}{\|\bv_h\|_{\bV}}\\
        &+\sup_{\bzero\neq\bv_h\in\bV_h}\frac{F^{{\varphi}}(\bv_h)-F^{{\varphi}_h}(\bv_h)-a_1(\bu_h-\bu_h^*,\bv_h)-d_1(\bu_h;\bu_h,\bv_h)+d_1(\bu_h^*;\bu_h^*,\bv_h)}{\|\bv_h\|_{\bV}}\\
        &\leq\|p-p_h^*\|_{\widehat{\rQ}(\cT_h)}+(1+C_{\mathsf{emb}}\alpha^{-\frac12})\alpha_1\|\varphi-\varphi_h\|_{\widehat{\Psi}(\cT_h)}+(1+2\|\bu\|_{\bV})\|\bu-\bu_h^*\|_{\bV} +{\|\bu-\bu_h^*\|_{\bV}^2}.
    \end{align*}
    Finally, a use of the triangle inequality yields the error estimate for pressure 
\begin{align}\label{eq:error.est.p-ph}
        \|p-p_h\|_{\widehat{\rQ}(\cT_h)}&\leq(1+\frac1{\beta_{\mathsf{d}}}+C_G)\,\mathrm{dist}(p,\rQ_h)+(1+C_{\mathsf{emb}}\alpha^{-\frac12})(\frac{\alpha_1}{\alpha_{\mathsf{DF}}}+\frac{\alpha_1}{\beta_{\mathsf{d}}})\|\varphi-\varphi_h\|_{\widehat{\Psi}(\cT_h)}\nonumber\\&\qquad\qquad+C_G(\mathrm{dist}(\bu,\bV_h)+\mathrm{dist}(\bu,\bV_h)^2)\nonumber\\
        &\leq \tilde C_G \bigl(\mathrm{dist}(\bu,\bV_h)+\mathrm{dist}(\bu,\bV_h)^2+\mathrm{dist}(p,\rQ_h)+\|\varphi-\varphi_h\|_{\widehat{\Psi}(\cT_h)}\bigr),
    \end{align}
    where {\(\tilde C_G:=\max\bigl((1+\frac1{\beta_{\mathsf{d}}}+C_G), (1+C_{\mathsf{emb}}\alpha^{-\frac12})(\frac{\alpha_1}{\alpha_{\mathsf{DF}}}+\frac{\alpha_1}{\beta_{\mathsf{d}}})
    \bigr).\)}
} 

    Next, subtracting \eqref{eq:weak.hat.phi.disc} from \eqref{eq:weak.hat.phi}, it is not difficult to observe that \((\bzeta^*-\bzeta_h,\varphi^*-\varphi_h)\in\bW\times \widehat{\Psi}(\cT_h)\) solves the following system of equations for any arbitrary element \((\bzeta_h^*,\varphi_h^*)\in\bW_h\times \Psi_h\) \((\bu-\bu_h) \) (by substituting, in particular, \(\hat\bu=\bu,\hat\bu_h=\bu_h\) in \eqref{eq:weak.fixed.u.hat}, 
    \eqref{eq:weak.fixed.u.hat.disc}, repectively). Hence,
    \begin{align*}
        a_2(\bzeta^*_h-\bzeta_h,\bw_h) + b_{2,h}(\bw_h,\varphi^*_h-\varphi_h) + d_2^{\bu_h}(\bw_h, \varphi^*_h-\varphi_h) & = \mathcal{H}_{1,h}(\bw_h) \quad \forall \bw_h \in \bW_h, \\
    b_{2,h}(\bzeta_h^*-\bzeta_h, \psi_h) - c_{2,h}(\varphi^*_h-\varphi_h,\psi_h) & = \mathcal{H}_{2,h}(\psi_h) \quad \forall \psi_h \in \Psi_h,
    \end{align*}
    where \begin{align*}
\mathcal{H}_{1,h}(\bw_h)&:=a_2(\bzeta_h^*-\bzeta,\bw_h) + b_{2,h}(\bw_h,\varphi_h^*-\varphi) + d_2^{\bu_h}(\bw_h, \varphi_h^*-\varphi)-d_2^{\bu-\bu_h}(\bw_h,\varphi),\\
\text{and}\quad \mathcal{H}_{2,h}(\psi_h)&:=b_{2,h}(\bzeta_h^*-\bzeta,\psi_h)+c_{2,h}(\varphi-\varphi_h^*,\psi_h).
\end{align*} 
    {Finally, invoking \eqref{eq:ARD.disc}, together with the discrete stability estimate for \((\bzeta_h-\bzeta_h^*,\varphi_h-\varphi_h^*),\) and applying the triangle inequality, we obtain} 
   \begin{align}\label{eq:stab.ard.cea}\|(\bzeta-\bzeta_h,\varphi-\varphi_h)\|_{\bW\times\widehat{\Psi}(\cT_h)}\leq C_{\mathsf{C\acute{e}a}}^{\mathsf{ADR}}&\bigl(\mathrm{dist}(\bzeta,\bW_h)+\mathrm{dist}(\varphi,\Psi_h)\bigr)\nonumber\\&+\rF^{-\frac13}(1+C_{\mathsf{emb}}\alpha^{-\frac12})C_c^{\mathsf{ADR}}C_{\mathsf{d}}^{\mathsf{ADR}}\|m\|_{0,\frac65,\Omega}\|\bu-\bu_h\|_{\bV},\end{align}
   where 
\begin{align}\label{eq:Cea.ARD}
C_{\mathsf{C\acute{e}a}}^{\mathsf{ADR}}&:=C_{\mathsf{d}}^{\mathsf{ADR}}\bigl(3+C_{\mathsf{emb}}(1+C_{\mathsf{emb}}\alpha^{-\frac12})^2+\bigl(\rF^{-\frac13}(1+C_{\mathsf{emb}}\alpha^{-\frac12})\nonumber\\& \qquad\qquad \times C_{\mathsf{d},\bu_h}^{\mathsf{DF}}(\|\bf(\bzero)\|_{0,\Omega}+\alpha_1\alpha^{-\frac12}(1+C_{\mathsf{emb}}))C_{\mathsf{d}}^{\mathsf{ADR}\|m\|_{0,\frac65,\Omega}}\bigr)\bigr).
\end{align}
Next, invoking \eqref{eq:stab.ard.cea} in \eqref{eq:error.est.u-uh}, we arrive at the following error estimate for velocity
\begin{align}\label{eq:cea.u}
       \|\bu-&\bu_h\|_{\bV}\leq C_G\bigl(\mathrm{dist}(\bu,\bV_h)+{\mathrm{dist}(\bu,\bV_h)^2}+\mathrm{dist}(p,\rQ_h)\bigr)\nonumber\\&+C_{\mathsf{C\acute{e}a}}^{\mathsf{ADR}}(1+C_{\mathsf{emb}}\alpha^{-\frac12}){\frac{\alpha_1}{\alpha_{\mathsf{DF}}}}\bigl(\mathrm{dist}(\bzeta,\bW_h)+\mathrm{dist}(\varphi,\Psi_h)\bigr)\nonumber\\&+{(1+C_{\mathsf{emb}}\alpha^{-\frac12})}{\frac{\alpha_1}{\alpha_{\mathsf{DF}}}}\rF^{-\frac13}(1+C_{\mathsf{emb}}\alpha^{-\frac12})C_c^{\mathsf{ADR}}C_{\mathsf{d}}^{\mathsf{ADR}}\|m\|_{0,\frac65,\Omega}\|\bu-\bu_h\|_{\bV},  
   \end{align}
   and hence,   \begin{align}\label{eq:final.err.est.for.u}
       \|\bu-\bu_h\|_{\bV}\leq C^{\mathsf{C\acute{e}a}}_{\bu}\bigl(\mathrm{dist}(\bu,\bV_h)+{\mathrm{dist}(\bu,\bV_h)^2}+\mathrm{dist}(p,\rQ_h)+\mathrm{dist}(\bzeta,\bW_h)+\mathrm{dist}(\varphi,\Psi_h)\bigr),
   \end{align}
   where \(C^{\mathsf{C\acute{e}a}}_{\bu}:={\max\bigl(C_G,{\frac{\alpha_1}{\alpha_{\mathsf{DF}}}}{(1+C_{\mathsf{emb}}\alpha^{-\frac12})}C_{\mathsf{C\acute{e}a}}^{\mathsf{ADR}}\bigr)}(1-{\rF^{-\frac13}(1+C_{\mathsf{emb}}\alpha^{-\frac12})C_c^{\mathsf{ADR}}C_{\mathsf{d}}^{\mathsf{ADR}}}\|m\|_{0,\frac65,\Omega})^{-1}.\) Remember that, from the hypothesis of Theorem~\ref{th:Cea},  \((1-{\rF^{-\frac13}(1+C_{\mathsf{emb}}\alpha^{-\frac12})C_c^{\mathsf{ADR}}C_{\mathsf{d}}^{\mathsf{ADR}}}\|m\|_{0,\frac65,\Omega})> 0.\)
   
   {Invoking the bound of \eqref{eq:final.err.est.for.u} and \eqref{eq:stab.ard.cea} in \eqref{eq:error.est.p-ph} we obtain the error estimate for pressure as follows\begin{align}\label{est:final.error.est.for.p}
       \|p-p_h\|_{\widehat\rQ(\cT_h)}&\leq
C^{\mathsf{C\acute{e}a}}_{p}\bigl(\mathrm{dist}(\bu,\bV_h)+{\mathrm{dist}(\bu,\bV_h)^2}+\mathrm{dist}(p,\rQ_h)+\mathrm{dist}(\bzeta,\bW_h)+\mathrm{dist}(\varphi,\Psi_h)\bigr),
   \end{align}
   where $C^{\mathsf{C\acute{e}a}}_{p}:=\max\bigl(\tilde{C}_G,C_{C\acute{e}a}^{\mathsf{ADR}},C_{\mathsf d}^{\mathsf{ADR}}C_{\bu}^{C\acute{e}a}\|m\|_{0,\frac65,\Omega}\bigr).$}
 Finally, using \eqref{eq:final.err.est.for.u} and \eqref{est:final.error.est.for.p} in \eqref{eq:stab.ard.cea}, we obtain
  \begin{align*}
        \|\bu-&\bu_h\|_{\bV}+\|p-p_h\|_{\widehat{\rQ}(\cT_h)}+\|\bzeta-\bzeta_h\|_{\bW}+\|\varphi-\varphi_h\|_{\widehat{\Psi}(\cT_h)}\nonumber\\
        & \leq C_{\mathsf{C\acute{e}a}}\bigl(\mathrm{dist}(\bu,\bV_h)+{\mathrm{dist}(\bu,\bV_h)^2}+\mathrm{dist}(p,\rQ_h)+\mathrm{dist}(\bzeta,\bW_h)+\mathrm{dist}(\varphi,\Psi_h) \bigr),
    \end{align*}
    where  {\(C_{\mathsf{C\acute{e}a}}:=\max\bigl({\rF^{-\frac13}(1+C_{\mathsf{emb}}\alpha^{-\frac12})C_c^{\mathsf{ADR}}C_{\mathsf{d}}^{\mathsf{ADR}}}\|m\|_{0,\frac65,\Omega}C_{\bu}^{C\acute{e}a}, C_{p}^{C\acute{e}a}, C_{C\acute{e}a}^{\mathsf{ADR}}\bigr),\)}
    is independent of the model parameters \(\kappa, \varrho_0,\) and the discretisation parameter~\(h.\) This concludes the rest of the proof.
\end{proof}
\begin{remark}\label{rem:pressure,flux.robust}
    {Let us remark that, similarly as in \cite[Corollary~4.5]{dhpr_sinum26}, it is possible to derive pressure-robust and concentration-robust error estimates for velocity and flux (because of the discrete velocity and flux kernels being subspaces of the continuous ones) as follows
    \begin{subequations}
        \begin{align}
            \|\bu-\bu_h\|_{\bV}\leq C_{\mathsf{V}}(\mathrm{dist}(\bu,\bV_h)+{\mathrm{dist}(\bu,\bV_h)^2}),\quad \text{and}\quad
            \|\bzeta-\bzeta_h\|_{\bW}\leq C_{\mathsf{F}}\,\mathrm{dist}(\bzeta,\bW_h),
        \end{align}
    \end{subequations}
    where the constants \(C_{\mathsf{V}}\) and \(C_{\mathsf{F}}\) are independent of \(\kappa,\varrho_0,\) and \(h\).}
\end{remark}

\noindent
In order to provide a theoretical convergence rate, we recall some useful approximation properties. 
Let $t \in (1,\infty)$ and denote by  \(\Pi_h:\bW^{l,t}(\Omega)\cap\bH(\vdiv_{1,t},\Omega)\to\mathbb{RT}_k(\cT_h)\)  the Fortin operator and by \(P_h:\rW^{l,t}(K)\to \mathbb{P}_k(K)\),  the local orthogonal \(\rL^2\) projection. Then, as a standard consequence of the Bramble--Hilbert lemma, optimal approximation, and inverse inequalities on polynomial spaces, we can assert that  
\begin{subequations}
     \begin{align}
\label{eq:approx-v}        
& \star {\|\bv- \Pi_h \bv\|_{m,t,\Omega}\leq C_1 h^{l-m}|\bv|_{l,t,\Omega}\quad \forall \bv\in \mathbf{W}^{l,t}(\Omega), \ 1\leq l \leq k+1,}\\
  \label{eq:approx-div}  & \star { \|\vdiv(\bv-\Pi_h\bv)\|_{m,t,\Omega}\leq C_2h^{l-m}|\vdiv\bv|_{l,t,\Omega} \quad \forall \bv \in \mathbf{W}^{1,t}(\Omega),\ \text{with}}\\
    \nonumber           & \qquad \qquad \qquad \qquad \qquad \qquad \qquad \qquad \qquad  {\vdiv(\bv)\in \rW^{1,t}(\Omega)\ \text{and} \ 0\leq l \leq k+1,}\\
  \label{eq:approx-p}  & \star{\|q-P_hq\|_{m,t,\Omega}\leq C_2 h^{l-m}|p|_{l,t,\Omega}\quad \forall q\in \rW^{l,t}(\Omega),\ 0\leq l \leq k+1,} \\
    \label{eq:approx-p-K}  & \star {\|\nabla(q-P_hq)\|_{0,t,{K}}\leq C_3 h_K^{l}|p|_{l+1,t,K}\quad \forall q\in \rW^{l+1,t}({K}),\  K\in\mathcal{T}_h,\ l \leq k,} 
     \end{align}
     \end{subequations}
     {for $ 0\leq m\leq l$, where $C_1,C_2>0$ are  independent of \(h\) and $C_3>0$ is independent of $h_K$ 
(see \cite[Sections 4.1 and 4.5]{gatica2022p} for the first three, and \cite{ern2021finite} for the fourth  one).}

To conclude this section, the following theorem provides the theoretical rate of convergence of the mixed finite element scheme \eqref{eq:weak}, under suitable regularity on the exact solution. 
 \begin{theorem}[Convergence rates]\label{th:convergence}
        Let $(\bu,p,\bzeta,\varphi) \in \bV \times \rQ \times \bW\times \Psi$ be the solution of the continuous problem \eqref{eq:weak}  with the following regularity conditions
{\begin{gather*}\bu\in \kappa^{-\frac12}{\bH^{l}}(\Omega)\cap{\rF^{\frac13}\mathbf{W}^{l,3}(\Omega)},  \quad \vdiv\bu\in {\rH^l}(\Omega), \quad \text{and} \quad {p=p_1+p_2+p_3}, \\
\text{with} \quad {p_1\in \rH^{l}(\Omega), \quad p_2\in \kappa^{\frac12}\rH^{l+1}(\Omega), \quad \text{and}\quad  p_3 \in \rF^{-\frac13}\rW^{l+1,\frac32}(\Omega)},\\
\bzeta\in \alpha^{-\frac12}\bH^{l}(\Omega)\cap\bH(\vdiv_{\frac65},\Omega), \quad \vdiv\bzeta\in {\rW}^{l,\frac65}(\Omega), \quad \text{and} \quad \varphi=\varphi_1+\varphi_2,\\
\text{with}\quad\varphi_1\in \alpha^{\frac12}\rH^{l+1}(\Omega),\quad \varphi_2\in\rW^{l+1,6}(\Omega),
\end{gather*}
where \(1\leq l\leq k+1\).  }
Then, there exist positive constants \(C^i_\sharp,\,i=1,\cdots4\), independent of \(h, \kappa, \rF, \alpha\), and \(\varrho_0,\) such that 
\begin{align*}
 \|\bu-\bu_h\|_{\bV}+\|p-p_h\|_{\widehat{\rQ}(\cT_h)}&+\|\bzeta-\bzeta_h\|_{\bW}+\|\varphi-\varphi_h\|_{\widehat{\Psi}(\cT_h)}\leq C^1_\sharp h^{l}\bigl[\kappa^{-\frac12}\|\bu\|_{l,\Omega}+\|\vdiv\bu\|_{l,\Omega}\\&+\rF^{\frac13}\|\bu\|_{l,3,\Omega}\bigr]+{(C^1_\sharp)^2 h^{2l}\bigl[\kappa^{-\frac12}\|\bu\|_{l,\Omega}+\|\vdiv\bu\|_{l,\Omega}+\rF^{\frac13}\|\bu\|_{l,3,\Omega}\bigr]^2}\\& +{C^2_\sharp h^{l}\bigl[ |p_1|_{l,\Omega}+\kappa^{\frac12}|p_2|_{l+1,\Omega}+\rF^{-\frac13}|p_3|_{l+1,\frac32,\Omega}\bigr]}\\&+{C^3_\sharp h^{l}\{\alpha^{-\frac12}\|\bzeta\|_{l,\Omega}+\|\vdiv\bzeta\|_{l,\frac65,\Omega}\}} +{C^4_\sharp h^{l}\{\alpha^{\frac12}|\varphi_1|_{l+1,\Omega}+|\varphi_2|_{l+1,6,\Omega}\}}.
\end{align*}
    \end{theorem}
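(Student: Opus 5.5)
The plan is to start from the quasi-optimality bound \eqref{eq:Cea} of Theorem~\ref{th:Cea}, which already holds with a constant $C_{\mathsf{C\acute{e}a}}$ independent of $\kappa,\rF,\varrho_0,h$. That reduces the statement to bounding each best-approximation error by choosing explicit discrete candidates. Concretely, we take $\bv_h=\Pi_h\bu$ for the velocity and $\bw_h=\Pi_h\bzeta$ for the flux. For the pressure we use $q_h=P_hp_1+P_hp_2+P_hp_3$, and for the concentration $\psi_h=P_h\varphi_1+P_h\varphi_2$. Since distances are infima, each is bounded by the weighted norm of the corresponding interpolation error. The quadratic term $\mathrm{dist}(\bu,\bV_h)^2$ is then handled simply by squaring the linear velocity bound, which produces the $(C^1_\sharp)^2h^{2l}[\cdots]^2$ contribution.

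\textbf{Velocity and flux.} By definition of $\|\bullet\|_{\bV}$ we have
\[\|\bu-\Pi_h\bu\|_{\bV}=\kappa^{-\frac12}\|\bu-\Pi_h\bu\|_{0,\Omega}+\|\vdiv(\bu-\Pi_h\bu)\|_{0,\Omega}+\rF^{\frac13}\|\bu-\Pi_h\bu\|_{0,3,\Omega}.\]
We apply \eqref{eq:approx-v} with $m=0$ and $t=2,3$ to the first and third terms. We apply \eqref{eq:approx-div} with $m=0$, $t=2$ to the divergence term. Since the weights are constants, they factor out and give $C^1_\sharp h^l[\kappa^{-\frac12}\|\bu\|_{l,\Omega}+\|\vdiv\bu\|_{l,\Omega}+\rF^{\frac13}\|\bu\|_{l,3,\Omega}]$ with $C^1_\sharp$ depending only on $C_1,C_2$. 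The flux is treated the same way in $\|\bullet\|_{\bW}$, using $t=2$ for the $\alpha^{-\frac12}$-weighted $\rL^2$ part and $t=\frac65$ in \eqref{eq:approx-div} for the divergence part. The commuting property $\vdiv\Pi_h=P_h\vdiv$ makes the latter a pure $\rL^{6/5}$ projection error.

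\textbf{Pressure and concentration.} Here the infimum structure of \eqref{norm:Q.hat} and \eqref{norm:Psi.hat} is essential. We bound
\[\|p-q_h\|_{\widehat\rQ(\cT_h)}\le \|p_1-P_hp_1\|_{0,\Omega}+\kappa^{\frac12}|p_2-P_hp_2|_{1,\cT_h}+\rF^{-\frac13}|p_3-P_hp_3|_{1,\frac32,\cT_h}.\]
The first term is $O(h^l|p_1|_{l,\Omega})$ by \eqref{eq:approx-p}. For the broken seminorms, the elementwise gradient parts follow from \eqref{eq:approx-p-K} summed over $K$.

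The jump contributions $h_F^{1-r}\|[\![q-P_hq]\!]\|^r_{0,r,F}$ are the step I expect to be the main technical obstacle. Since $q\in\rW^{l+1,r}$ has no jumps, $[\![q-P_hq]\!]=[\![P_hq-q]\!]$. We control this by the local scaled trace inequality $\|v\|^r_{0,r,F}\lesssim h_K^{-1}\|v\|^r_{0,r,K}+h_K^{r-1}\|\nabla v\|^r_{0,r,K}$ on the two neighbouring elements. Combining it with \eqref{eq:approx-p} locally (exponent $l+1$) and with \eqref{eq:approx-p-K} gives $h_F^{1-r}\|[\![\bullet]\!]\|^r_{0,r,F}\lesssim h^{lr}|q|^r_{l+1,r,\omega_F}$. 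Shape regularity keeps the constants $h$-independent and bounds the overlap of the patches $\omega_F$. This yields the $C^2_\sharp$ term. The concentration is identical: the $\rL^6$ piece uses \eqref{eq:approx-p} with $t=6$, giving $h^{l+1}\le h^l$ times $|\varphi_2|_{l+1,6,\Omega}$ for $h\le\mathrm{diam}(\Omega)$, and the $\alpha^{\frac12}$-weighted broken $\rH^1$ piece is treated as above.

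Finally, inserting these four bounds into \eqref{eq:Cea} gives the claim. None of the interpolation constants depends on the physical parameters, because all weights are constant scalars pulled out of the norms. The only parameter dependence is therefore the one inherited from $C_{\mathsf{C\acute{e}a}}$, which we absorb into the constants $C^i_\sharp$.
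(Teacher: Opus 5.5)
Your route is the paper's: insert $\Pi_h\bu$, $\Pi_h\bzeta$, $P_hp_i$, $P_h\varphi_i$ into the quasi-optimality bound \eqref{eq:Cea}, split the pressure and concentration errors along the sum-norm infimum, control the jumps with a scaled trace inequality, and square the velocity bound for the quadratic term. Your handling of the jumps and of the roles of $\varphi_1,\varphi_2$ is cleaner than the paper's. However, two steps do not deliver the statement as written, and the paper's proof has the same two defects.

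First, your closing sentence absorbs $C_{\mathsf{C\acute{e}a}}$ into the $C^i_\sharp$. Theorem~\ref{th:Cea} only asserts independence of $\kappa,\rF,\varrho_0,h$, not of $\alpha$. In fact $C_{\mathsf{C\acute{e}a}}$ is built from $C^{\mathsf{ADR}}_{\mathsf{d}}=2C_d^{-1}(1+C_{\mathsf{emb}}\alpha^{-\frac12})$ and from $C^{\mathsf{ADR}}_{\mathsf{C\acute{e}a}}$, which contains further powers of $(1+C_{\mathsf{emb}}\alpha^{-\frac12})$, so it blows up as $\alpha\to0$. The smallness hypothesis of Theorem~\ref{th:Cea}, which you import silently, also involves $\alpha$. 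So the most you can conclude is independence of the parameters covered by Theorem~\ref{th:Cea}, not of $\alpha$. An $\alpha$-robust rate would need an $\alpha$-robust quasi-optimality estimate, which the fixed-point framework does not provide (see the remark following Theorem~\ref{th:wellposedness}).

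Second, your broken-seminorm estimates only work for $l\le k$, whereas the statement allows $l=k+1$. The jump bound needs $\|q-P_hq\|_{0,r,K}\lesssim h_K^{l+1}|q|_{l+1,r,K}$, i.e. \eqref{eq:approx-p} with exponent $l+1\le k+1$. The elementwise gradient bound \eqref{eq:approx-p-K} is likewise restricted to $l\le k$, as is your step $h^{l+1}\le h^l$ for $\varphi_2$. At $l=k+1$ the elementwise gradient of $q-P_hq$ is only $O(h^{k})=O(h^{l-1})$, since $\nabla P_hq$ has degree $k-1$. For $k=0$, where $l=1$ is the only admissible value, $\sum_K\|\nabla(p_2-P_hp_2)\|^2_{0,K}=|p_2|^2_{1,\Omega}$, so $\kappa^{\frac12}|p_2-P_hp_2|_{1,\cT_h}$ does not decay at all. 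Your pressure and concentration bounds are therefore established only for $1\le l\le k$, which excludes the lowest-order scheme. For $l=k+1$ you would have to move $p_2-P_hp_2$ (resp. $\varphi_1-P_h\varphi_1$) into the unweighted $\rL^2$ (resp. $\rL^6$) slot of the sum norm. That restores the rate $h^l$ but loses the $\kappa^{\frac12}$ (resp. $\alpha^{\frac12}$) weighting on the right-hand side.
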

    \begin{proof}  
        {Using the approximation properties \eqref{eq:approx-v}, \eqref{eq:approx-div}, \eqref{eq:approx-p},  and \eqref{eq:approx-p-K} in \eqref{eq:final.err.est.for.u} and \eqref{est:final.error.est.for.p}, we obtain the following convergence rates for velocity and pressure}
\begin{align*} 
 \|\bu - \bu_h\|_{\bV}   \leq C_{\mathsf{V}}\, \mathrm{dist}(\bu,\bV_h) &\leq C_{\mathsf{V}}\max(2C_1,C_2) h^l (\kappa^{-\frac12}\|\bu\|_{l,\Omega} + \|\vdiv\bu\|_{l,\Omega} + \rF^{\frac13}\|\bu\|_{l,3,\Omega})\\&+{\bigl(\max(2C_1,C_2) h^l (\kappa^{-\frac12}\|\bu\|_{l,\Omega} + \|\vdiv\bu\|_{l,\Omega} + \rF^{\frac13}\|\bu\|_{l,3,\Omega})\bigr)^2}, \,\text{and}\\
 \|p -  p_h\|_{\widehat{\rQ}(\cT_h)}  \leq C_p^{C\acute{e}a}(\| p_1 - P_hp_1\|_{0,\Omega}& + \kappa^{\frac12}|p_2-P_hp_2|_{1,\cT_h} + \rF^{-\frac13}|p_3-P_hp_3|_{1,\frac32,\cT_h}) \\
&\leq  C_p^{C\acute{e}a}\max\bigl(C_2,C_3,C_4\bigr)h^{l}\bigl(|p_1|_{l,\Omega} +  \kappa^{\frac12}|p_2|_{l+1,\Omega} +  \rF^{-\frac13}|p_3|_{l+1,\frac32,\Omega} \bigr),
\end{align*} 
where $C_4$ is same as in \cite[Theorem~4.6]{dhpr_sinum26}.
{Next, for the error estimate for flux, at first take \(m=0\) and \(t=0\) in \eqref{eq:approx-v}, and then, \(m=0\) and \(t=\frac65\) in \eqref{eq:approx-div}, which lead to
\begin{align*}
    \|\bzeta-\Pi_h\bzeta\|_{0,\Omega}\leq C_1C_{\mathsf{V}}h^l\,|\bzeta|_{l,\Omega},\quad\text{and}\quad 
    \|\vdiv(\bzeta-\Pi_h\bzeta)\|_{0,\frac65,\Omega} \leq C_2C_{\mathsf{V}}h^l|\vdiv\bu|_{l,\frac65,\Omega}.
\end{align*}}
{
In turn, owing to Remark~\ref{rem:pressure,flux.robust}, we derive the error bound on the flux as follows
\begin{align*}
    \|\bzeta-\bzeta_h\|_{\bW}\leq C_{\mathsf{F}}\,\mathrm{dist}(\bzeta,\bW_h) \leq C_{\mathsf{F}}\max(C_1,C_2)h^l(\alpha^{-\frac12}\|\bzeta\|_{l,\Omega}+\|\vdiv\bzeta\|_{0,\frac65,\Omega}).
\end{align*}
Lastly, for the concentration we employ the  decomposition \(\varphi=\varphi_1+\varphi_2,\) and the properties of the sum norm  \eqref{norm:Psi.hat}, and  apply  \eqref{eq:approx-p} with $m=0$ and  $t=2$, and \eqref{eq:approx-p-K} with $t=0$ and \(t=6,\) summed over $K\in \cT_h$. We combine that with the following trace inequality to control the jump term in the broken norms
\[h_F^{1-t} \|[\![\varphi_j - P_h \varphi_j]\!] \|^t_{0,t,F} =h_F^{1-t} \| [\![P_h \varphi_j]\!]\|^t_{0,t,F} \leq C_5 h^{lt}|\varphi_j|^t_{l+1,r,\omega_F}, \]
where $\omega_F$ stands for the usual face patch. Therefore, we readily obtain the following bound 
\begin{align*} 
\|\varphi - P_h \varphi\|_{\widehat{\Psi}(\cT_h)} & \leq C_{C\acute{e}a}( \| \varphi_1 - P_h\varphi_1\|_{0,6,\Omega} + \alpha^{\frac12}|\varphi_2-P_h\varphi_2|_{1,\cT_h})\\&\leq C_{C\acute{e}a} \max\bigl(C_2,C_3,C_5\bigr) h^{l}\bigl(|\varphi_1|_{l,6,\Omega} +  \alpha^{\frac12}|\varphi_2|_{l+1,\Omega}\bigr),
\end{align*}}
where $C_5>0$ is independent of all the parameters. This completes the proof. 
    \end{proof}

 In the next section we define a variable operator preconditioner for the Newton linearisation of  \eqref{eq:weak}. 
\section{Operator preconditioning}\label{sec:precond}
This section discusses preconditioners for the monolithic coupled system arising from the weighted norms introduced in the previous sections which is robust with respect to all the model parameters expect for coefficient of diffusion.

First, for \(0<s<\infty\), we denote by \(\mathcal{J}_s:\rL^s(\Omega)\to\bigl(\rL^s(\Omega)\bigr)'\)  the canonical duality map defined as 
\begin{equation}
\langle{\mathcal{J}}_s(\theta_1),\theta_2\rangle:=\int_{\Omega}|\theta_1|^{s-2}\theta_1\theta_2,\quad\forall\theta_1,\theta_2\in\rL^s(\Omega).\end{equation}
Similarly, mimicking the operator notation $-\nabla \vdiv$ that induces the div-div bilinear form, we define $-\nabla \mathcal{J}_{s}(\vdiv)$ as}
\begin{equation}\label{eq:duality2} {\langle-\nabla \mathcal{J}_{s}(\vdiv)(\bzeta_1),\bzeta_2\rangle:={\langle} \mathcal{J}_{s}(\vdiv\bzeta_1{),\vdiv\bzeta_2\rangle},\quad \forall \bzeta_1,\bzeta_2 \ \text{such that} \ \vdiv\bzeta_i \in \rL^s(\Omega).}\end{equation}

Denote by \(\mathcal{H}_1(\bu)\) the G\^ateaux derivative of the nonlinear operator $d_1(\bullet;\bullet,\bullet)$, i.e., \(|\bu|\bu\) at the state \(\bu\) in the direction of \(\delta\bu\), and \(\mathcal{H}_2(\ast)\) by the linearisation of \(d_2^{\ast}(\bullet,\bullet)\) around \(\ast\). The linearised system corresponding to \eqref{eq:strong} takes the following form
\begin{equation}\label{eq:strong.linearised}
\begin{aligned}
[\kappa^{-1}+\rF\mathcal{H}_1(\hat\bu)]\bu + \nabla p & = \bf(\hat\varphi) \quad \text{in $\Omega$},\\
    \vdiv \bu & = 0 \quad \text{in $\Omega$},\\
    \alpha^{-1}\bzeta- \nabla \varphi + { \alpha^{-1}} \mathcal{H}_2(\hat\bu) \varphi &=\bzero \quad \text{in $\Omega$},\\
    \vdiv \bzeta - \varrho_0 \varphi & = -m \quad \text{in $\Omega$},
\end{aligned}
 \end{equation}
with the same boundary conditions as in \eqref{eq:strong}. 
In addition, consider the Newton--Raphson linearisation, which starting from an initial guess $\bu^0,p^0,\bzeta^0,\varphi^0$, and iterating on  $m = 1, 2, \ldots$ until convergence, seeks velocity, pressure, flux, and concentration increments $(\delta \bu, \delta p, \delta\bzeta, \delta\varphi)$ in the space \begin{align}\label{eq:linearised-spaces}\underline{\bV}\times \underline{\rQ}\times \underline{\bW}\times\underline{\Psi} &:=\bigl[\kappa^{-\frac12}\bL^2(\Omega)\cap\bH_{\Gamma_{\bu}}(\vdiv,\Omega)\cap\{\rF\mathcal{H}_1(\hat{\bu})\}^\frac13\bL^3(\Omega)\bigr]\nonumber\\ &\quad \times \bigl[\rL^2(\Omega)+\kappa^\frac12\rH_{\Gamma_p}^1(\Omega)+\{\rF{\mathcal{H}_1(\hat{\bu})}\}^{-\frac13}\rW_{\Gamma_p}^{1,\frac32}(\Omega)\bigr]\nonumber\\&\quad\times \bigl[(\mathcal{H}_2^{-1}(\hat{\bu})\alpha)^{-\frac12}\bL^2\cap\bH_{\Gamma_p}(\vdiv_{\frac65},\Omega)\bigr]\times \bigl[(\mathcal{H}^{-1}_2(\hat{\bu})\alpha)^{\frac12}\rH^1(\Omega)+\rL^6(\Omega)\bigr]
,\end{align} 
such that the following holds in $\underline{\bV}'\times \underline{\rQ}'\times\underline{\bW}'\times \underline{\Psi}'$
{\begin{equation}\label{eq:newton} 
\begin{pmatrix}[\kappa^{-1}+\rF 
\mathcal{H}({\bu}^m)]\mathcal{I} & \nabla&0&0 \\
    -\vdiv  &0 &0&0\\\alpha^{-1}\varphi^m&0&\alpha^{-1}\mathcal{I}&-\nabla+\alpha^{-1}\bu^m\\0&0&\vdiv&-\varrho_0\mathcal{I}
    \end{pmatrix}\begin{pmatrix}\delta \bu \\ \delta p\\\delta\bzeta\\\delta\varphi\end{pmatrix}= \begin{pmatrix}
      F - \{\kappa^{-1}+|\bu^m|\}({\bu}^m) - \nabla{p}^m \\ G +\vdiv{\bu}^m  \\-\alpha^{-1}\bzeta^m+\nabla\varphi^m-\alpha^{-1}\bu^m\varphi^m\\H-\vdiv\bzeta^m-\varrho_0\varphi^m
    \end{pmatrix}. \end{equation}} 
    
Here, $\mathcal{I}$ denotes the identity operator  at the continuous level, but for the discrete case it stands for the corresponding mass matrix (either for velocity or pressure or flux or concentration). Further, we note that, in weak form, the action of the operator $\rF\mathcal{H}({\bu}^m)\mathcal{I}:\underline{\bV}\to \underline{\bV}'$ reads
\[ \langle \rF \mathcal{H}({\bu}^m)(\delta \bu), \bv \rangle := \int_\Omega \rF |\bu^m|\delta\bu \cdot \bv + \int_\Omega \rF |\bu^m|^{-1}(\bu^m \cdot \delta\bu) (\bu^m \cdot \bv), \]
and \[\langle\mathcal{H}_2(\bu^m)\btau,\varphi\rangle:=\int_{\Omega}(\bu^m\cdot\btau)\varphi.\]
Then, update the solution as \((\bu^{m+1},p^{m+1},\bzeta^{m+1},\varphi^{m+1}) = (\bu^m + \delta \bu, p^m + \delta p,\bzeta^m+\delta\bzeta,\varphi^m+\delta\varphi)\). 
\noindent
Next, we follow the approach recently advanced in \cite[Section~3]{cai2026parameter} to propose a robust block-diagonal preconditioner for \eqref{eq:newton}. 
{We construct block-diagonal preconditioners for the Darcy--Forchheimer velocity and pressure blocks following 
\cite{dhpr_sinum26}. On the other hand, and differently from \cite{cai2026parameter} and \cite{dhpr_sinum26}, we readily note that the sub-blocks associated with the ARD equations consist of a perturbed, \emph{non-symmetric} saddle-point of the form 
\begin{equation}\label{abstract-block} \begin{pmatrix} \alpha^{-1}\bbA & \bbB^* + \alpha^{-1}\bbD \\ \bbB & - \varrho_0 \bbC\end{pmatrix},\end{equation}
where we are abusing of notation to emphasise the parameter dependence of the different blocks. 
Let us first concentrate on the discrete case since it simplifies the discussion as $\bu_h^m \in \bL^{\infty}(\Omega)$ and so the Banach structure is not required. In this case, $\bbA:\bH(\vdiv,\Omega)\to \bH(\vdiv,\Omega)'$ denotes the  $\bL^2(\Omega)$ mass map in $\bH(\vdiv,\Omega)$, $\bbB:\bH(\vdiv,\Omega)\to \rL^2(\Omega)'$ is the divergence operator, $\bbD$ is the \textit{mixed} advection coupling operator ($\bbD\varphi \approx \bu^m\varphi$), and $
\bbC:\rL^2(\Omega) \to \rL^2(\Omega)'$ is the scalar mass map in $\rL^2(\Omega)$. 
Therefore a typical Riesz map preconditioner, also based on a Schur complement approximation, is motivated by \cite[Sect. 3.1]{benzi2005numerical} (see also \cite[Section 1]{baerland2020observation} for the symmetric, unperturbed case). The guiding principle is to scale the flux space with $\alpha^{-1}$, the concentration space with $\varrho_0$, and ensure that the Schur complement is spectrally equivalent to a properly weighted  operator.
In this case, a block-diagonal preconditioner  
reads as follows (using {a complete Schur complement structure for the (2,2)-block and an incomplete one for the (1,1)-block}) 
\begin{equation}\label{abstract-prec} 
 \mathrm{diag}(\cR,\cS) = \begin{pmatrix}(\alpha^{-1}\bbA +  \bbB^*\bbB)^{-1} & 0 \\ 0 & ([1+\varrho_0]\,\bbC)^{-1}+(\varrho_0 \bbC + \alpha \bbB\bbA^{-1}\bbB^* + \bbB\bbA^{-1}\bbD)^{-1}
\end{pmatrix},
\end{equation}
or, equivalent Schur complement structure for the (1,1)-block
(and note that it is also possible to derive block-triangular versions, see for e.g., \cite{olshanskii2007pressure}). 
For the the advection term the preconditioner part $\bbB\bbA^{-1}\bbD$ is not symmetric and it corresponds to   perturbation with respect to the diffusion term $\bbB\bbA^{-1}\bbB^*$ if $
\bu^m$ is bounded. However, for $\alpha \ll 1$,  the advection term $\bbD$ becomes dominant and the Schur complement changes from diffusion--reaction to transport--reaction and it can destroy the spectral equivalence. For a fully computable preconditioner associated with \eqref{abstract-prec}, we approximate the top diagonal block by $(\alpha^{-1} \cI - \alpha^{-1}\nabla(\vdiv) )^{-1}$, 
where $\cI - \nabla(\vdiv)$ 
is the Raviart--Thomas full matrix.The discrete diffusion and convection operators $-\Delta$ and $\bu^m\cdot\nabla$ require some stabilisation (we use symmetric interior penalty and SUPG, respectively). The operator~$\varrho_0\cI-\alpha\Delta+\bu^m\cdot\nabla$ is the inverse of the following block
\begin{align} \label{eq:S-block-def} 
\nonumber \langle\cS^{-1}\varphi_h,\psi_h \rangle & =  \int_\Omega \varrho_0 \varphi_h\,\psi_h +  \alpha \sum_{K\in \cT_h} \int_K  \nabla \varphi_h \cdot \nabla \psi_h + \sum_{F\in \mathcal{F}_h}{\int_F} \frac{\alpha}{h_F} [\![\varphi_h]\!]\,[\![\psi_h]\!] \\
&  \quad - \alpha \sum_{F\in \mathcal{F}_h} \int_F  \{\!\{\nabla \varphi_h\}\!\}\cdot[\![\psi_h\bn_F]\!] - \alpha \sum_{F\in \mathcal{F}_h}{\int_F}  \{\!\{\nabla \psi_h\}\!\}\cdot[\![\varphi_h\bn_F]\!]  \nonumber \\
&  \quad 
+ 
\sum_{K\in \cT_h} \int_K (\bu_h^m\cdot \nabla \varphi_h)\,\psi_h 
- \sum_{F\in \mathcal{F}_h} \int_F (\bu_h^m \cdot \bn_F)\varphi_h^{\mathsf{upw}}\,[\![\psi_h]\!]
\nonumber \\
& \quad +  \sum_{K\in \cT_h}\tau_K \int_K (\varrho_0\varphi_h + \bu_h^m \cdot \nabla \varphi_h - \alpha \Delta \varphi_h)(\bu^{m}_h\cdot\nabla \psi_h), 
\end{align}
where 
$\tau_K = \bigl( \frac{2|\bu_h^m|}{h_K}+ \frac{4\alpha}{h_K^2} + \varrho_0 \bigr)^{-1} >0$ is the typical SUPG stabilisation parameter and 
$\varphi_h^{\mathsf{upw}}$ is the typical upwind concentration taking the value $\varphi_h^+$ if $\bu_h^m \cdot \bn_F \geq 0$ and $\varphi_h^-$ otherwise 
(see, e.g. \cite{franca1992stabilized}).
Back to the continuous case in  the Banach context, we note that the mass matrix $\bbA$ is no longer an isomorphism in the correct topology. A natural choice is 
\[ \bbJ_{\bV}: \bV \to \bV', \quad \langle \bbJ_{\bv}\bzeta,\bxi\rangle := (\bzeta,\bxi) + \langle |\vdiv\bzeta|^{4}\vdiv \bzeta, \vdiv \bxi\rangle_{\rL^{6/5}(\Omega)\times \rL^6(\Omega)}, \]
that is, using the notation in \eqref{eq:duality2}, $\bbJ_{\bQ} \equiv \cI - \nabla\cJ_6(\vdiv)$. This map is nonlinear, but it is coercive, bounded, and strictly monotone. Then, the Banach counterpart of \eqref{abstract-prec} is 
\begin{equation}\label{abstract-prec-Banach} 
 \mathrm{diag}(\cR,\cS) = \begin{pmatrix}\alpha \bbJ_{\bV}^{-1} & 0 \\ 0 & (\varrho_0 \bbJ_{\bQ} + \alpha \bbB\bbJ_{\bV}^{-1}(\bbB^* +\alpha^{-1}\bbD))^{-1}
\end{pmatrix}, 
\end{equation}
where $\bbJ_{\bQ} \equiv \cJ_{6}$ is the duality mapping (cf. \eqref{eq:duality}). 
It is important to remark that, while the Banach case provides the robust well-posedness, the robust boundedness of the operators, and the robustness of scaling, it is sufficient with implementing in practice the Hilbert version of the block-diagonal preconditioner \eqref{abstract-prec} with \eqref{eq:S-block-def}.  
}

\noindent
Finally, inspired by \eqref{abstract-prec}, {and considering another formulation inspired by the primal formulation,} we propose the following preconditioners {($\cB^{1,1}_{\mathsf{DFARD}}, \cB^{1,2}_{\mathsf{DFARD}}, \cB^{2,1}_{\mathsf{DFARD}},$ and $\cB^{2,2}_{\mathsf{DFARD}}$)} for the coupled monolithic problem~\eqref{eq:strong.linearised}, where 
\begin{align}\label{precond.linear.B2}
\cB^{i,j}_{\mathsf{DFARD}}:=\begin{pmatrix}
\mathcal{P}_i & 0 & 0 & 0\\
0 &\mathcal{Q}_i& 0 & 0\\
0 & 0 & \mathcal{R}_j & 0\\
0 & 0 & 0 & \mathcal{S}_j
\end{pmatrix},
\end{align}
with \begin{align*}
\mathcal{P}_1&:={\bigl([\kappa^{-1}+\rF\mathcal{H}(\hat{\bu})]\,\mathcal{I}-[\kappa^{-1}+\rF\mathcal{H}(\hat{\bu})]\nabla\vdiv\bigr)^{-1}},\quad \text{and}\quad\mathcal{Q}_1:=[\kappa^{-1}+\rF\mathcal{H}(\hat{\bu})]^{-1}{\mathcal{I}}^{-1}, \\
\mathcal{P}_2&:={\bigl([\kappa^{-1}+\rF\mathcal{H}(\hat{\bu})]\mathcal{I}-\nabla\vdiv\bigr)^{-1}},\quad \text{and}\quad\mathcal{Q}_2:={\mathcal{I}^{-1}+(-\kappa\Delta)^{-1}-\bigl([\rF\mathcal{H}(\hat{\bu})]^{-1}\Delta_{\frac32}\bigr)^{-1}},
\\\mathcal{R}_1&:=(\alpha^{-1}\cI +  \nabla\cdot\vdiv+\alpha^{-1}\hat\bu\nabla\cdot)^{-1},\quad \text{and}\quad\mathcal{S}_1:={(1+\varrho_0
)\,\mathcal{I}^{-1}+ (\varrho_0\mathcal{I}-\alpha\Delta)^{-1}},
\\\mathcal{R}_2&:=(\alpha^{-1}\cI +  \nabla\cdot\vdiv)^{-1},\quad \text{and}\quad\mathcal{S}_2:={(1+\varrho_0
)\,\mathcal{I}^{-1}+ (\varrho_0\mathcal{I}-\alpha\Delta+\bu^m\cdot\nabla)^{-1}}.
\end{align*}
}

\section{Numerical results}\label{sec:numer}
In this section, we illustrate the accuracy and performance of the proposed scheme and operator preconditioning through several numerical experiments. We verify that the convergence rates are robust. The numerical implementations are based on the open-source  finite element libraries \texttt{Gridap} \cite{badia22} and \texttt{FEniCS} \cite{alnaes2015fenics}.

\smallskip
\noindent\textbf{Example~1 (convergence against smooth solutions in 2D).}
We first verify convergence with  unity parameters $\rho_0 = \alpha = \kappa = \alpha_0 = \alpha_1 =1.$ The domain is the unit square $\Omega = (0,1)^2$, with $\Gamma_{\bu}$ the bottom and left edges and $\Gamma_p$ the remainder of the boundary. The manufactured velocity, pressure, and concentration are 
\begin{gather*}
 \bu = \begin{pmatrix}
     \cos(\pi x)\sin(\pi y)\\-\sin(\pi x) \cos(\pi y)
 \end{pmatrix}, \quad p = \sin(\pi x) \sin (\pi y), 
 \quad \varphi = \cos(2\pi x) \cos (2\pi y).
\end{gather*}
The Darcy--Forchheimer velocity is divergence-free, and the forcing term is chosen as 
$ \bf(\varphi) = (\alpha_0 + \alpha_1 \varphi) \bg$, where $\bg = (0,1)^{\tt t}$, which clearly fulfills the condition \eqref{eq:lip.bound.f}. With these solutions we construct an exact total flux, and additional source terms {(so that the momentum balance is satisfied)} and non-homogeneous essential and natural boundary conditions required for the convergence assessment. The error history associated with the 
proposed mixed finite element  method on a sequence of successively refined partitions of the domain  are presented in Table~\ref{num:conv.rates-unity}, where we also tabulate rates of error decay computed as 
$\texttt{rate}  =\log(e_{(\bullet)}/\tilde{e}_{(\bullet)})[\log(h/\tilde{h})]^{-1}$,
where $e,\tilde{e}$ denote errors generated on two
consecutive  meshes of sizes $h$ and~$\tilde{h}$, respectively. The results indicate an optimal $O(h^{k+1})$ convergence in all fields, where we are using the natural norms, without parameter-weighting, and also the parameter-weighted norms. {Sample numerical solutions (obtained with the second-order scheme and on the finest mesh resolution) are portrayed in Figure~\ref{fig:ex01}.}

\begin{table}[!t]
\setlength{\tabcolsep}{3pt}
\centering
{\footnotesize\begin{tabular}{|rcccccccccc|}
\hline
DoF  &    $h$ & $\|\bu-\bu_h\|_{3,\vdiv,\Omega}\!$  &  \texttt{rate} &  $\|p-p_h\|_{0,\Omega}$  &  \texttt{rate} & \(\|\bzeta-\bzeta_h\|_{\vdiv_{6/5},\Omega}\)& \texttt{rate} &\(\|\varphi-\varphi_h\|_{0,6,\Omega}\)&\texttt{rate} &  \texttt{it} \\
\hline
\multicolumn{11}{|c|}{Errors and convergence rates for $k = 0$}\\
\hline
       40 & 0.7071 & 5.07e-01 & $\star$ & 2.44e-01 & $\star$ & 3.50e+01 & $\star$ & 7.09e-01 & $\star$ & 5 \\
    160 & 0.3536 & 2.92e-01 & 0.797 & 1.29e-01 & 0.921 & 1.88e+01 & 0.900 & 3.43e-01 & 1.050 & 6\\
    640 & 0.1768 & 1.51e-01 & 0.946 & 6.52e-02 & 0.982 & 9.68e+00 & 0.955 & 2.02e-01 & 0.764 & 6\\
   2560 & 0.0884 & 7.64e-02 & 0.987 & 3.27e-02 & 0.996 & 4.88e+00 & 0.988 & 1.03e-01 & 0.967 & 5\\
  10240 & 0.0442 & 3.83e-02 & 0.997 & 1.64e-02 & 0.999 & 2.45e+00 & 0.997 & 5.19e-02 & 0.992 & 5\\
  40960 & 0.0221 & 1.91e-02 & 0.999 & 8.18e-03 & 1.000 & 1.22e+00 & 0.999 & 2.60e-02 & 0.998 & 6\\
\hline
\multicolumn{11}{|c|}{Errors and convergence rates for $k = 1$}\\
\hline
     128 & 0.7071 & 1.80e-01 &$\star$& 7.62e-02 & $\star$ & 1.92e+01 &$\star$  & 3.14e-01 & $\star$ & 7 \\
    512 & 0.3536 & 4.78e-02 & 1.914 & 1.97e-02 & 1.954 & 5.20e+00 & 1.884 & 1.23e-01 & 1.356 & 7\\
   2048 & 0.1768 & 1.24e-02 & 1.943 & 4.96e-03 & 1.986 & 1.37e+00 & 1.923 & 3.33e-02 & 1.883 & 8\\
   8192 & 0.0884 & 3.15e-03 & 1.979 & 1.24e-03 & 1.997 & 3.47e-01 & 1.985 & 8.54e-03 & 1.962 & 7\\
  32768 & 0.0442 & 7.92e-04 & 1.993 & 3.11e-04 & 1.999 & 8.68e-02 & 1.997 & 2.15e-03 & 1.991 & 8\\
 131072 & 0.0221 & 1.98e-04 & 1.998 & 7.78e-05 & 2.000 & 2.17e-02 & 1.999 & 5.38e-04 & 1.998 & 8\\
 \hline
 \hline
DoF  &    $h$ & $\|\bu-\bu_h\|_{\bV}\!$  &  \texttt{rate} &  $\|p-p_h\|_{\widehat{\rQ}(\cT_h)}$  &  \texttt{rate} & \(\|\bzeta-\bzeta_h\|_{\bW}\)& \texttt{rate} &\(\|\varphi-\varphi_h\|_{\widehat{\Psi}(\cT_h)}\)&\texttt{rate} &  \texttt{it} \\
\hline
\multicolumn{11}{|c|}{Errors and convergence rates for $k = 0$}\\
\hline
    40 & 0.7071 & 9.83e-01 & $\star$ & 6.05e-02 & $\star$ & 3.50e+01 & $\star$ & 3.66e-01  & $\star$ & 6 \\
    160 & 0.3536 & 5.56e-01 & 0.824 &1.94e-02 & 1.640& 1.88e+01 & 0.900 & 1.77e-01&1.050& 7\\
    640 & 0.1768 & 2.88e-01 & 0.947 &4.96e-03 & 1.969& 9.68e+00 & 0.955 & 5.14e-02& 1.782& 7\\
   2560 & 0.0884 & 1.46e-01 & 0.986 &1.25e-03&1.990 & 4.88e+00 & 0.988 &1.33e-02 &1.948& 7\\
  10240 & 0.0442 & 7.29e-02 & 0.996 & 3.13e-04&1.996& 2.45e+00 & 0.997 & 3.36e-03&1.987 & 7\\
  40960 & 0.0221 & 3.65e-02 & 0.999 &7.84e-05 &1.998& 1.22e+00 & 0.999 &8.42e-04  & 1.997 & 7\\
\hline
\multicolumn{11}{|c|}{Errors and convergence rates for $k = 1$}\\
\hline
      128 & 0.7071 & 3.31e-01 &$\star$& 1.55e-01 & $\star$ & 1.92e+01 &$\star$  & 3.68e-01 & $\star$ & 7 \\
    512 & 0.3536 & 8.86e-02 & 1.902 & 4.68e-02  & 1.730 & 5.20e+00 & 1.884 &1.81e-01&1.019& 8\\
   2048 & 0.1768 & 2.30e-02 & 1.946 & 1.23e-02  & 1.931 & 1.37e+00 & 1.923 & 5.61e-02 &1.041 & 9\\
   8192 & 0.0884 & 5.82e-03 & 1.981 & 3.11e-03 & 1.981 & 3.47e-01 & 1.985 & 8.82e-02 & 1.938& 8\\
  32768&0.0442&1.46e-03&1.994&7.80e-04&1.994&8.68e-02&1.997&5.81e-03&1.984&8\\
  131072&0.0221&3.66e-04&1.998&1.95e-04&1.998&2.17e-02&1.999&1.46e-03&1.994&9\\
\hline
\end{tabular}}
\caption{Example 1. Error history against smooth solutions for different polynomial degrees \((k=0,1)\), and iteration count for the nonlinear Newton--Raphson solver. Here we use unit parameters, and the non-weighted norms (top rows), and the parameter-weighted norms (bottom rows) for all fields.} 
\label{num:conv.rates-unity}
\end{table}

\begin{figure}[t!]
    \centering
\includegraphics[width=0.245\linewidth]{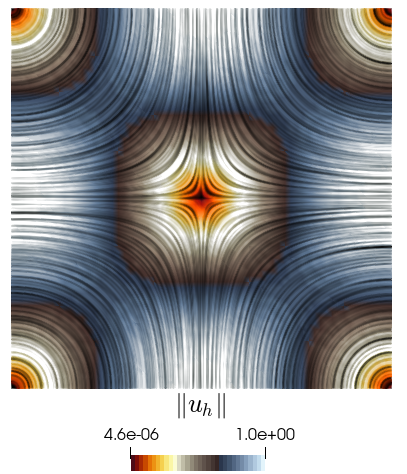}
\includegraphics[width=0.245\linewidth]{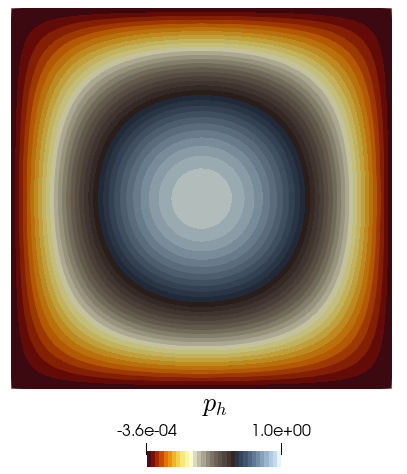}
\includegraphics[width=0.245\linewidth]{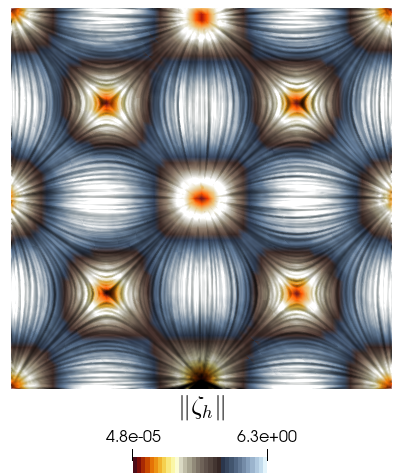}
\includegraphics[width=0.245\linewidth]{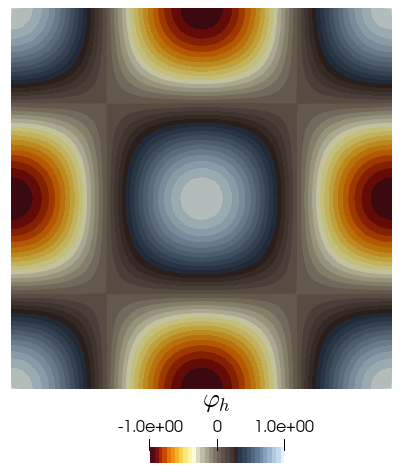}
    \caption{Example 1. Approximate velocity magnitude and line integral contours, pressure profile, flux magnitude and line integral contours, and concentration, all computed with a second-order scheme.}
    \label{fig:ex01}
\end{figure}

\smallskip
\noindent\textbf{Example 2  (convergence relative to fine-mesh reference solutions).} 
Let $\Omega=(0,2)\times(0,1)$ and $(\bf,g,m)=(\mathbf{0},0,0)$. The boundary conditions are
$\bu\cdot\bn=-0.25\,y(1-y)$ and $\varphi = 2$ on the left edge, $\bu\cdot\bn=\bzeta\cdot\bn=0$ on the top and bottom edges, $p=0$  and $\varphi = 1$ on the right edge. Following a similar test in \cite{dhpr_sinum26},   we consider the following heterogeneous  permeability
\[
\kappa(\bx)=\kappa_0 \left(1+\exp\!\left(-\tfrac{1}{2}\,[5 -4x +7y + \cos x]^2\right)\right).
\]
Convergence rates are computed against reference solutions obtained via  additional mesh refinements. Errors are reported in both the standard norms of Table~\ref{num:conv.rates-unity} and the weighted norms $\mathbf{V}, \widehat{\rQ}, \mathbf{W}, \widehat{\Psi}$. While the experimental order of convergence seems affected by the extreme parameter values when using the usual norms, the optimality is restored when using the weighted norms. Optimal rates are observed for all variables and both polynomial degrees. The Newton--Raphson iteration counts (Table~\ref{tab:conv.rates.weighted}) remain low even for extreme parameter values. We show the approximate solutions on a coarse mesh in Figure \ref{fig:ex02}. 

\begin{figure}[t!]
    \centering
\includegraphics[width=0.245\linewidth]{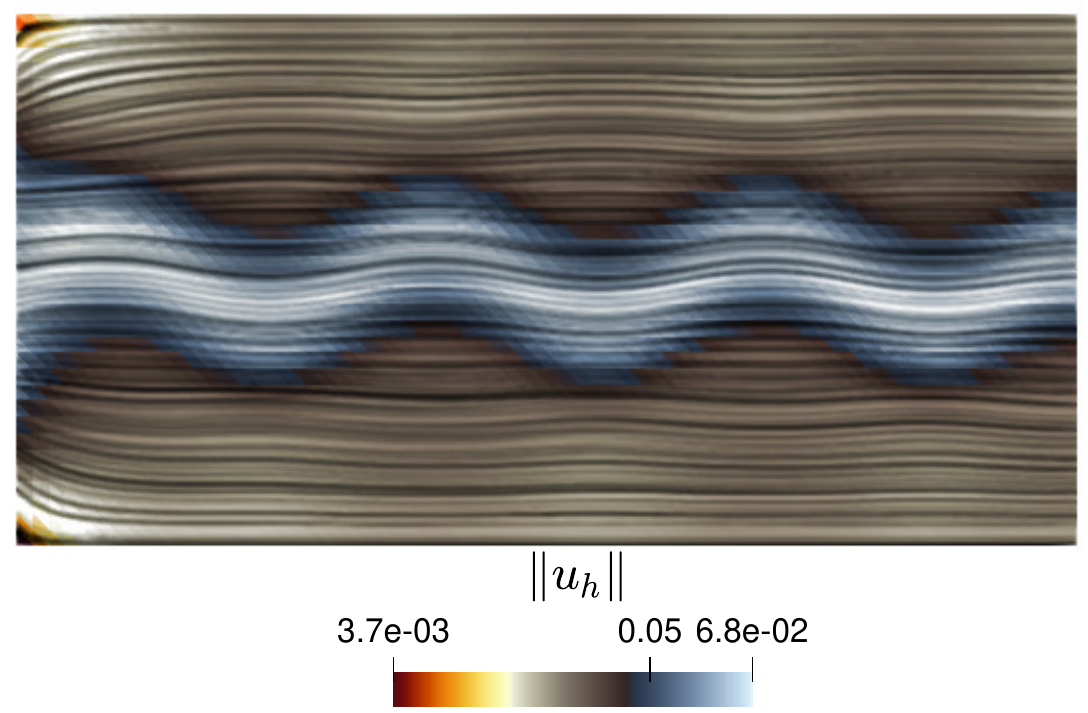}
\includegraphics[width=0.245\linewidth]{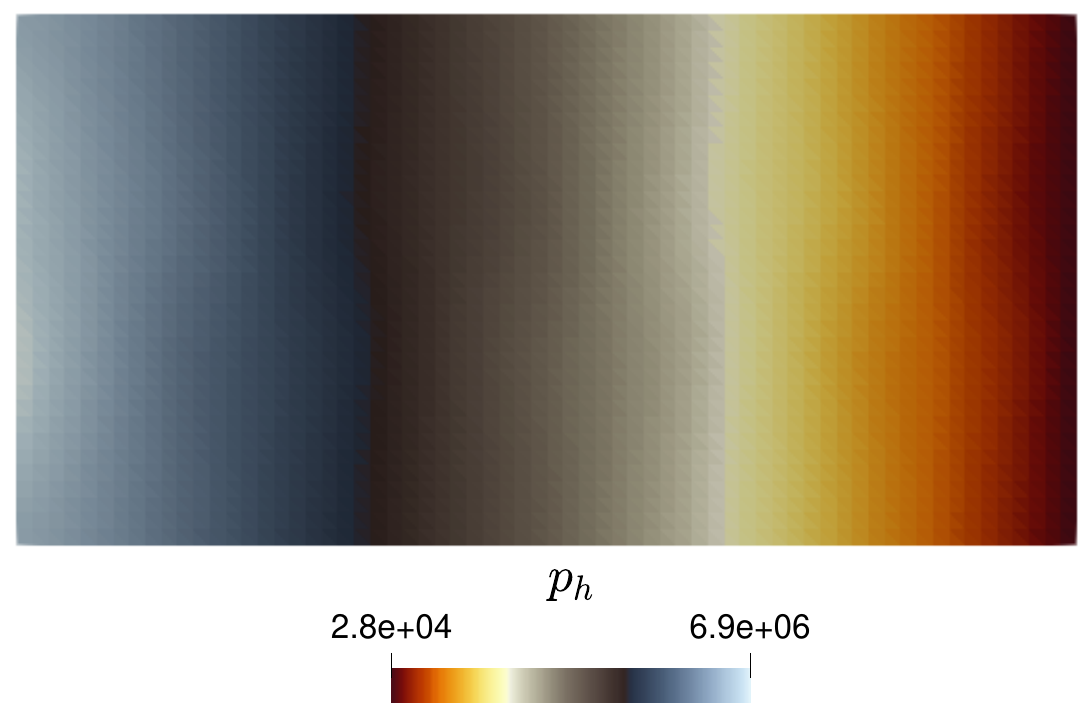}
\includegraphics[width=0.245\linewidth]{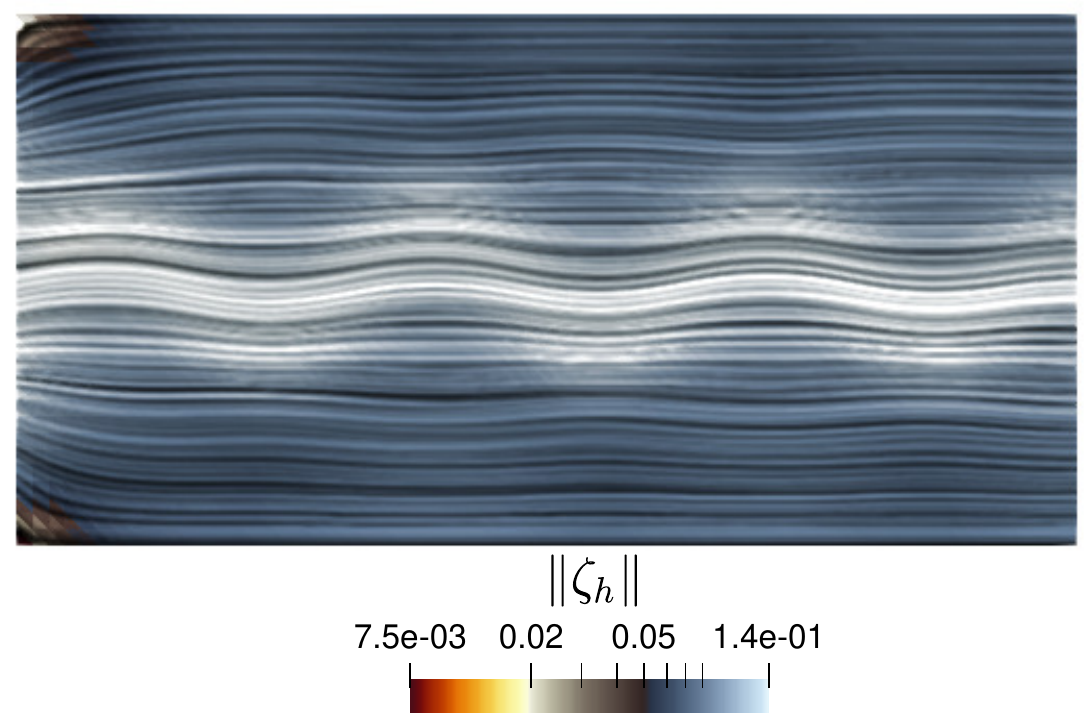}
\includegraphics[width=0.245\linewidth]{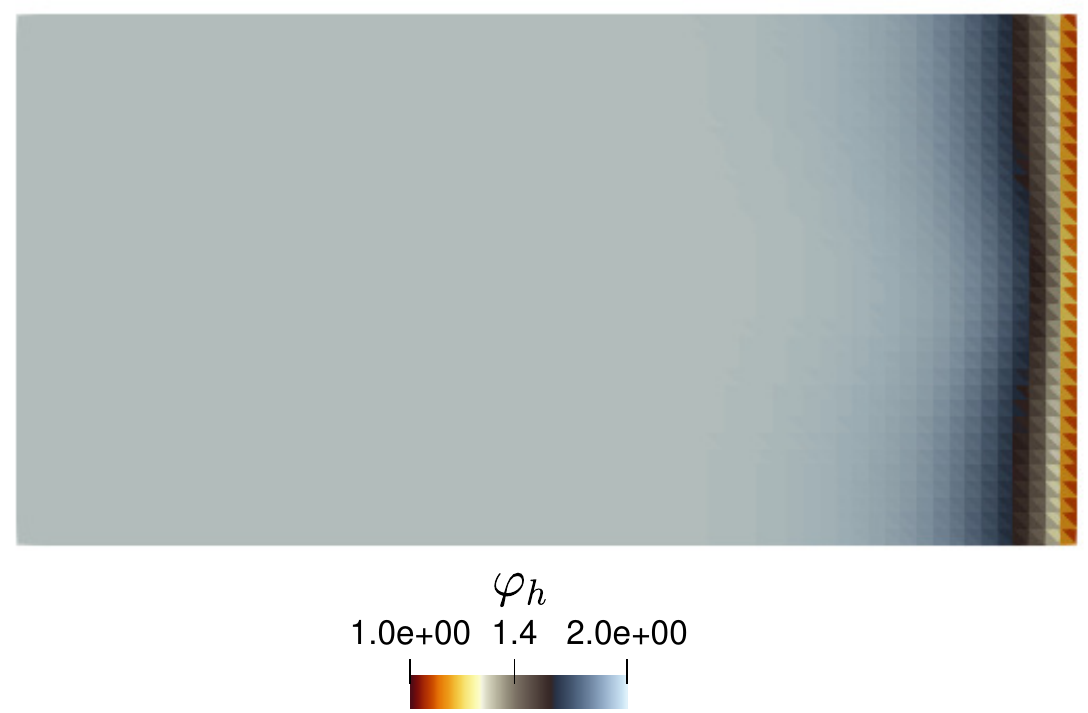}
    \caption{Example 2. Approximate velocity magnitude and line integral contours, pressure profile, flux magnitude and line integral contours, and concentration, all computed with a lowest-order scheme on a coarse.}
    \label{fig:ex02}
\end{figure}

\begin{table}[!t]\label{tab:conv.rates.weighted}
\setlength{\tabcolsep}{2.5pt}
\centering 
{\footnotesize \begin{tabular}{|rcccccccccc|}
\hline
DoF  &    $h$ & $\|\bu-\bu_h\|_{3,\vdiv}\!$  &  \texttt{rate} &  $\|p-p_h\|_{0,\Omega}$  &  \texttt{rate} & \(\|\bzeta-\bzeta_h\|_{\vdiv_{\frac65},\Omega}\)& \texttt{rate} &\(\|\varphi-\varphi_h\|_{0,6,\Omega}\)&\texttt{rate} &  \texttt{it} \\
\hline
  72 & 0.7071 & 5.18e-03 & $\star$ & 1.02e+05 & $\star$ & 1.22e+00 & $\star$ & 3.71e+01 &  $\star$ & 5 \\
    168 & 0.3536 & 4.25e-03 & 0.285 & 3.86e+04 & 1.403 & 1.04e+00 & 0.222 & 2.04e+01 & 0.862&  5\\
    480 & 0.1768 & 4.56e-03 & -0.101 & 1.78e+04 & 1.122 & 1.00e+00 & 0.060 & 1.11e+01 & 0.880&  5\\
   1584 & 0.0884 & 2.77e-03 & 0.718 & 6.29e+03 & 1.497 & 5.17e-01 & 0.952 & 4.11e+00 & 1.432&  5\\
    5712 & 0.0442 & 1.53e-03 & 0.857 & 1.93e+03 & 1.707 & 2.83e-01 & 0.868 & 2.70e+00 & 0.605&  5\\
21648 & 0.0221 & 1.19e-03 & 0.383 & 1.53e+03 & 0.465 & 2.25e-01 & 0.182 & 2.14e+00 & 0.278 & 5 \\
\hline

DoF  &    $h$ & $\|\bu-\bu_h\|_{\bV}\!$  &  \texttt{rate} &  $\|p-p_h\|_{\widehat{\rQ}_h}$  &  \texttt{rate} & \(\|\bzeta-\bzeta_h\|_{\bW}\)& \texttt{rate} &\(\|\varphi-\varphi_h\|_{\widehat{\Psi}_h}\)&\texttt{rate} &  \texttt{it} \\
\hline
     72 & 0.7071 & 1.44e+01 & $\star$ & 7.54e+00 & $\star$ & 9.90e+00 & $\star$ & 6.77e+00 & $\star$ & 5 \\
    168 & 0.3536 & 6.95e+00 & 1.075 & 3.10e+00 & 1.281 & 6.71e+00 & 0.562 & 4.53e+00 & 0.579 & 5 \\
    480 & 0.1768 & 3.65e+00 & 0.905 & 1.57e+00 & 0.980 & 4.21e+00 & 0.673 & 2.70e+00 & 0.746 & 5\\
   1584 & 0.0884 & 1.64e+00 & 1.153 & 5.40e-01 & 1.542 & 1.94e+00 & 1.119 & 1.15e+00 & 1.229 & 5\\
   5712 & 0.0442 & 5.26e-01 & 1.642 & 2.93e-01 & 0.883 & 7.63e-01 & 1.343 & 6.81e-01 & 0.961 & 5\\
  21648 & 0.0221 & 2.18e-01 & 1.297 & 1.17e-01 & 1.630 & 3.17e-01 & 1.405 & 3.50e-01 & 0.985 & 5 \\
\hline
\end{tabular}}
\caption{Example 2. Error history relative to a fine-mesh reference solution: errors on successively refined grids, convergence rates, for $k=0$.  Newton--Raphson iteration counts corresponding to the nonlinear solver are also reported. Here we are using extreme values of the parameters \(\kappa_0=10^{-8},\rF=10^{3},\alpha=5\cdot 10^{-3},\rho_0=10^{-5},\) and both the usual norms (top row), the weighted norms (bottom row) for velocity, pressure, flux, and concentration, respectively.} \label{num:conv.rates}
\end{table}

\smallskip
\noindent\textbf{Example~3 (operator preconditioning).} In this example, we test the robustness of the preconditioners~{$\cB^{i,j}_{\mathsf{DFARD}},i,j=1,2$} for the fully-coupled linearised Darcy--Forchheimer and ADR system.  We stick to the domain \(\Omega=(0,1)^2,\) the lowest order
finite element family, and mixed boundary conditions. We perform a sequence of experiments in which the permeability of the material \(\kappa\), the Forchheimer coefficient   \(\rF\), the diffusivity coefficient \(\alpha,\)  the reaction coefficient \(\varrho_0,\) and the mesh size $h$ vary by several orders of magnitude.  

Note that in this context $-\Delta$ and  $ \bu^m\cdot\nabla$
are  the discrete diffusion and convection operators, and they both  require stabilisation. {To ensure robust invertibility, the classical consistency (average-times-jump) terms found in standard Symmetric Interior Penalty Galerkin (SIPG) methods \cite{arnold2002unified} are  omitted. This isolates a purely positive-definite interface penalty operator parametrised by the stabilisation penalty factor $\beta = 10$ and the local characteristic mesh sizes $h_e$ 
\begin{align*}
    \langle-\alpha\Delta\varphi_h,\psi_h\rangle=& \alpha\beta \sum_{K\in \cT_h} \int_K  \nabla \varphi_h \cdot \nabla \psi_h + \sum_{F\in \mathcal{F}_h}{\int_F} \frac{\alpha\beta}{h_F} [\![\varphi_h]\!]\,[\![\psi_h]\!]. 
\end{align*}
Hyperbolic stability for the advection term is governed by a pointwise upwind numerical flux along the interior interfaces   and boundaries, utilizing the velocity field $\bu^m$. By defining the positive and negative part operators as $[x]_ = \max(x, 0)$ and $[x]_- = \min(x, 0)$, respectively, the corresponding block is 
\begin{align*} 
 \langle(\bu^m\cdot\nabla)\varphi_h,\psi_h \rangle  =& 
-\sum_{K\in \cT_h} \int_K (\bu_h^m\cdot \nabla \psi_h)\,\varphi_h 
+ \sum_{F\in \mathcal{F}_h} \int_F 
\bigl([(\bu^m)^+ \cdot \bn_F^+]_+ \varphi^+ + [(\bu^m)^+ \cdot \bn_F^+]_- \varphi^- \bigr) \llbracket \psi \rrbracket   \\
&\quad  + \sum_{F\in \Gamma_{\text{out}}\cup \Gamma_{\text{in}}}  \int_F [\bu^m \cdot \bn_F]_+ \varphi \psi, 
\end{align*}
where  $\llbracket \phi \rrbracket = \phi^+ - \phi^-$ denotes the standard jump operator across the facet $F$ associated with the outward unit normal $\bn_F^+$. This   retains the essential upwind stability and diffusion scales while keeping a diagonally dominant structure   \cite{franca1992stabilized,kanschat2008multigrid}.
Even with this, we note}  that the tested preconditioners are not robust in the advection-dominated regime (large mesh Péclet number $\mathrm{Pe}_h = \frac{|\hat{\mathbf{u}}|h}{\alpha} \gg 1$). Therefore, we restrict our attention to the diffusion-dominated regime, where $\mathrm{Pe}_h \ll 1$, in which the preconditioners exhibit the desired robustness, see Table~\ref{tab:DF-ARD-Prec}.  The  performance of the preconditioners is assessed through condition numbers of the preconditioned system and the number of MINRES iterations needed to converge, all against  mesh refinement. 

\begin{table}[t]
\centering
\footnotesize
\setlength{\tabcolsep}{5pt}
\renewcommand{\arraystretch}{1.15}

{
\begin{minipage}{0.48\textwidth}
\centering
\begin{tabular}{|c |c|c| c|lll|}
\hline
$\rho_0$ & $\kappa$ & $\alpha$ & $\rF$ & \multicolumn{3}{c|}{$h$} \\
\cline{5-7}
 & & & & $2^{-2}$ & $2^{-3}$ & $2^{-4}$ \\
\hline

\multirow{3}{*}{$1$}
& $10^{-8}$ & $10^{-9}$ & $10^{6}$ & 1.8 (6) & 1.8 (6)& 1.8 (7) \\
& $10^{-4}$ & $10^{-5}$ & $1$ & 1.8 (10)& 1.8 (12) & 1.8 (14)\\
& $1$ & $10^{-1}$ & $10^{-6}$ &2.9 (27) &3.0 (28) &3.0 (28) \\
\hline

\multirow{3}{*}{$10^{-5}$}
& $10^{-8}$ & $10^{-9}$ & $10^{6}$ & 1.2 (8) &1.3 (10) &1.6 (14) \\
& $10^{-4}$ & $10^{-5}$ & $1$ &3.6 (31) &4.0 (35) &4.2 (35) \\
& $1$ & $10^{-1}$ & $10^{-6}$ &2.1 (22) &2.1 (22) &2.1 (22) \\
\hline

\multirow{3}{*}{$10^{-9}$}
& $10^{-8}$ & $10^{-9}$ & $10^{6}$ & 3.6 (24)&4.1 (26) &4.3 (28) \\
& $10^{-4}$ & $10^{-5}$ & $1$ &3.7 (32) &4.1 (35) &4.2 (35) \\
& $1$ & $10^{-1}$ & $10^{-6}$ &2.1 (22) &2.1 (22) &2.1 (22) \\
\hline
\end{tabular}
\end{minipage}}
\hfill
{\begin{minipage}{0.48\textwidth}
\centering
\begin{tabular}{|c |c|c| c|lll|}
\hline
$\rho_0$ & $\kappa$ & $\alpha$ & $\rF$ & \multicolumn{3}{c|}{$h$} \\
\cline{5-7}
 & & & & $2^{-2}$ & $2^{-3}$ & $2^{-4}$ \\
\hline

\multirow{3}{*}{$1$}
& $10^{-8}$ & $10^{-9}$ & $10^{6}$ & 1.8 (6) & 1.8 (6)& 1.8 (7) \\
& $10^{-4}$ & $10^{-5}$ & $1$ & 1.8 (10)& 1.8 (12) & 1.8 (14)\\
& $1$ & $10^{-1}$ & $10^{-6}$ &2.9 (28) &3.0 (29) &3.0 (29) \\
\hline

\multirow{3}{*}{$10^{-5}$}
& $10^{-8}$ & $10^{-9}$ & $10^{6}$ & 1.2 (8) &1.3 (10) &1.6 (14) \\
& $10^{-4}$ & $10^{-5}$ & $1$ &3.6 (32) &4.0 (36) &4.2 (36) \\
& $1$ & $10^{-1}$ & $10^{-6}$ &2.1 (23) &2.0 (23) &2.0 (23) \\
\hline

\multirow{3}{*}{$10^{-9}$}
& $10^{-8}$ & $10^{-9}$ & $10^{6}$ & 3.6 (25)&4.1 (27) &4.3 (28) \\
& $10^{-4}$ & $10^{-5}$ & $1$ &3.7 (32) &4.1 (35) &4.2 (37) \\
& $1$ & $10^{-1}$ & $10^{-6}$ &2.1 (23) &2.0 (23) &2.0 (23) \\
\hline
\end{tabular}
\end{minipage}}

\vspace{0.4cm}

{\begin{minipage}{0.48\textwidth}
\centering
\begin{tabular}{|c |c|c| c|lll|}
\hline
$\rho_0$ & $\kappa$ & $\alpha$ & $\rF$ & \multicolumn{3}{c|}{$h$} \\
\cline{5-7}
 & & & & $2^{-2}$ & $2^{-3}$ & $2^{-4}$ \\
\hline

\multirow{3}{*}{$1$}
& $10^{-8}$ & $10^{-9}$ & $10^{6}$ & 3.7 (21) & 4.1 (25)& 4.4 (25) \\
& $10^{-4}$ & $10^{-5}$ & $1$ & 3.6 (30)& 3.8 (32) & 3.5 (35)\\
& $1$ & $10^{-1}$ & $10^{-6}$ &2.9 (29) &3.0 (29) &3.0 (29) \\
\hline

\multirow{3}{*}{$10^{-5}$}
& $10^{-8}$ & $10^{-9}$ & $10^{6}$ & 3.7 (25) & 4.1 (29) & 4.4 (32) \\
& $10^{-4}$ & $10^{-5}$ & $1$ &3.7 (34) &4.1 (37) &4.2 (37) \\
& $1$ & $10^{-1}$ & $10^{-6}$ &1.9 (21) &1.9 (21) &1.9 (20) \\
\hline

\multirow{3}{*}{$10^{-9}$}
& $10^{-8}$ & $10^{-9}$ & $10^{6}$ & 3.7 (25)&4.1 (27) &4.4 (29) \\
& $10^{-4}$ & $10^{-5}$ & $1$ &3.7 (33) &4.1 (37) &4.2 (37) \\
& $1$ & $10^{-1}$ & $10^{-6}$ &1.9 (21) &1.9 (21) &1.9 (20) \\
\hline
\end{tabular}
\end{minipage}}
\hfill
{\begin{minipage}{0.48\textwidth}
\centering
\begin{tabular}{|c |c|c| c|lll|}
\hline
$\rho_0$ & $\kappa$ & $\alpha$ & $\rF$ & \multicolumn{3}{c|}{$h$} \\
\cline{5-7}
 & & & & $2^{-2}$ & $2^{-3}$ & $2^{-4}$ \\
\hline

\multirow{3}{*}{$1$}
& $10^{-8}$ & $10^{-9}$ & $10^{6}$ & 3.7 (21) & 4.1 (25)& 4.4 (25) \\
& $10^{-4}$ & $10^{-5}$ & $1$ & 3.6 (30)& 3.8 (32) & 3.5 (35)\\
& $1$ & $10^{-1}$ & $10^{-6}$ &2.9 (29) &3.0 (30) &3.0 (31) \\
\hline

\multirow{3}{*}{$10^{-5}$}
& $10^{-8}$ & $10^{-9}$ & $10^{6}$ & 3.7 (25) &4.1 (29) &4.4 (32) \\
& $10^{-4}$ & $10^{-5}$ & $1$ &3.6 (34) &4.1 (37) &4.2 (37) \\
& $1$ & $10^{-1}$ & $10^{-6}$ &1.8 (21) &1.8 (21) & 1.8 (22) \\
\hline

\multirow{3}{*}{$10^{-9}$}
& $10^{-8}$ & $10^{-9}$ & $10^{6}$ & 3.7 (25)&4.1 (27) &4.4 (29) \\
& $10^{-4}$ & $10^{-5}$ & $1$ &3.7 (34) &4.1 (37) &4.2 (37) \\
& $1$ & $10^{-1}$ & $10^{-6}$ &1.8 (21) &1.8 (21) &1.8 (22) \\
\hline
\end{tabular}
\end{minipage}}
\vspace{0.5cm}
\caption{{Example~3. Condition numbers (and MINRES iteration counts) corresponding to the preconditioners (\eqref{precond.linear.B2})~$\mathcal{B}^{1,1}_{\mathsf{DFARD}}$ (top left), $\mathcal{B}^{1,2}_{\mathsf{DFARD}}$ (top right), $\mathcal{B}^{2,1}_{\mathsf{DFARD}}$ (bottom left), and $\mathcal{B}^{2,2}_{\mathsf{DFARD}}$ (bottom right), under different parameter regimes of the coupled Darcy--Forchheimer and ADR model.}}
\label{tab:DF-ARD-Prec}
\end{table}

\smallskip
\noindent\textbf{Example 4 (lid-driven cavity flow).} {Next, we present a numerical simulation following the classical lid-driven cavity problem for the Darcy--Forchheimer-transport coupling \cite{sayah2021finite}. The domain is $\Omega = (0,1)^2$ (discretised with a triangular mesh representing 82176 DoFs for the lowest-order method), the model parameters are taken as follows $\kappa = 1$, $\rF = 20$, $\alpha = 1$, $\bf(\varphi) = (10\varphi,10\varphi)^{\tt t}$, $m = 0$, $\varrho_0 = 0$, and as boundary conditions we impose $\bu \cdot \bn = 0$ everywhere on $\partial\Omega$ and so {for the sake of pressure uniqueness,  we impose $\int_\Omega p = 0$ (with a real Lagrange multiplier). We also} set $\varphi = 20$ on the top edge and $\varphi = 0$ on the remainder of the boundary. In the first two rows of Figure~\ref{fig:placeholder} we present the solution profiles, which match the results of \cite{sayah2021finite}. We also carried out an extension to 3D of the same test, taking $\Omega = (0,1)^3$ (discretised with a mesh representing 2323200 DoFs), $\bf(\varphi) = (10\varphi,10\varphi,10\varphi)^{\tt t}$, and maintained all other parameters unchanged. The obtained numerical solutions are shown in the bottom row of the same figure. For both cases we have used the lowest-order method, and the Newton--Raphson iterative scheme (using Hager--Zhang line search) took seven  iterations to reach the prescribed tolerance of $10^{-6}$. 
At least in this flow-transport regime and with the chosen parametric values, the discrepancies between 3D and 2D simulations are unnoticeable.}

\begin{figure}[t!]
    \centering
\includegraphics[width=0.245\linewidth]{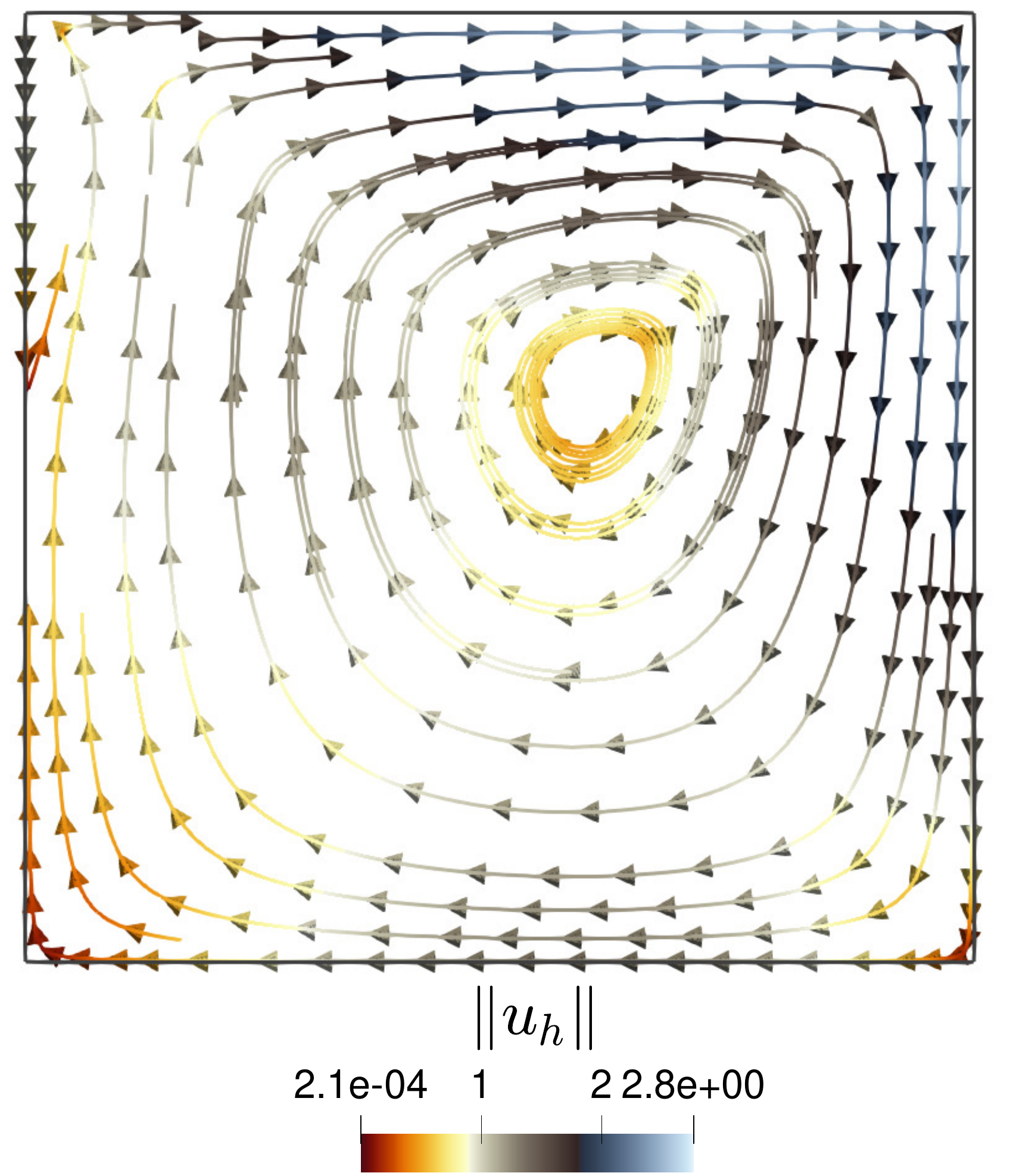}    \includegraphics[width=0.245\linewidth]{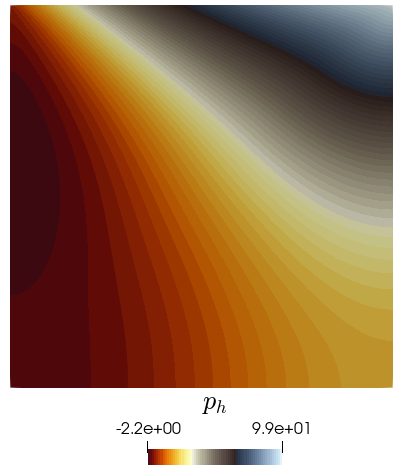}
\includegraphics[width=0.245\linewidth]{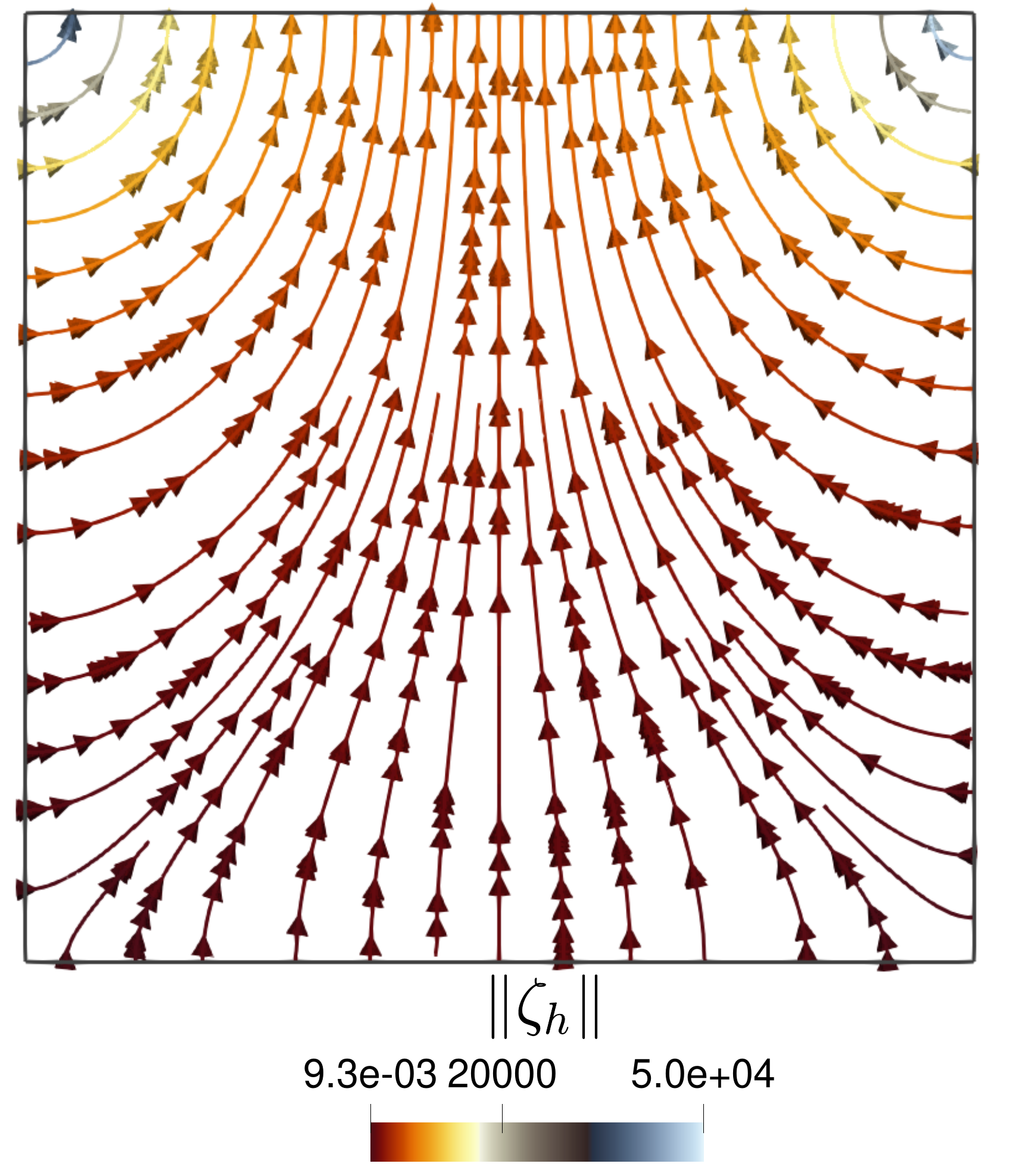}
\includegraphics[width=0.245\linewidth]{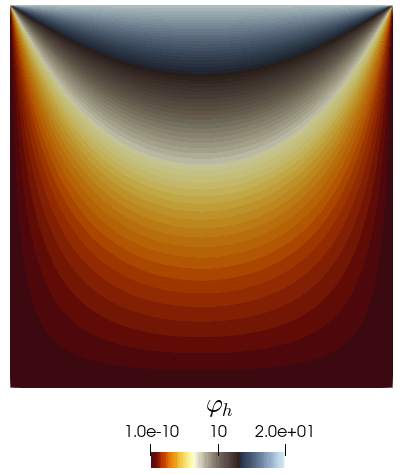}\\
\includegraphics[width=0.245\linewidth]{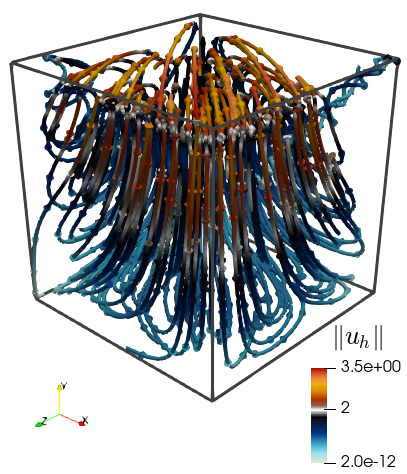}
\includegraphics[width=0.245\linewidth]{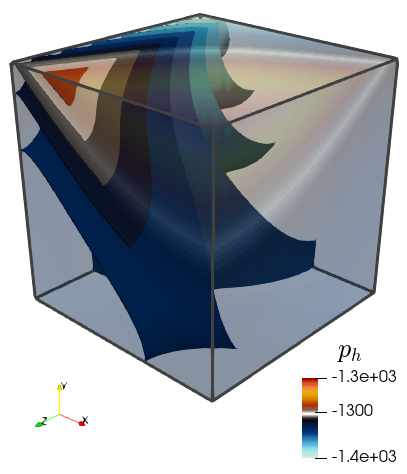}
\includegraphics[width=0.245\linewidth]{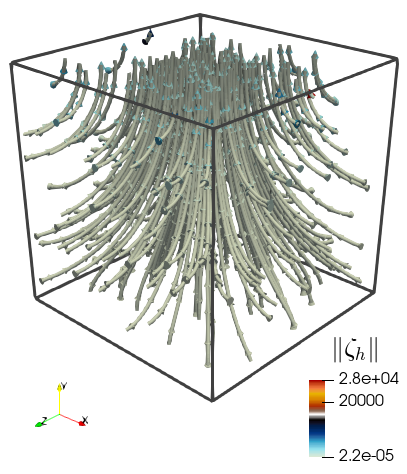}
\includegraphics[width=0.245\linewidth]{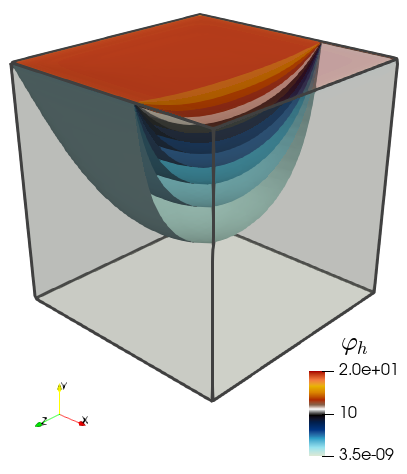}
    \caption{Example 4. Numerical solutions (discharge velocity streamlines, pressure profile, total flux, and concentration) for the lid-driven cavity problem. Parameters are taken as in \cite{sayah2021finite} for the 2D case (top and middle rows), and we also show its extension to 3D (bottom row).}
    \label{fig:placeholder}
\end{figure}

\smallskip
\noindent\textbf{Example 5 (transient miscible displacement).} To conclude, we apply a time-dependent version of \eqref{eq:strong} (replacing the reaction term $\varrho_0\varphi$ by $\varrho_0\partial_t \varphi$ in the concentration equation and the diffusion $-\varepsilon \Delta \varphi$ by $-\varrho_0\varepsilon \Delta \varphi$, where $\varrho_0$ is the porosity) to the classical problem of viscous fingering in a Hele--Shaw cell. Here \(\varphi\) is interpreted as the solute concentration (in mol/m\(^3\)) of a resident fluid being displaced by a second, less viscous fluid (water).

\begin{figure}[t!]
    \centering
    \includegraphics[width=0.245\linewidth]{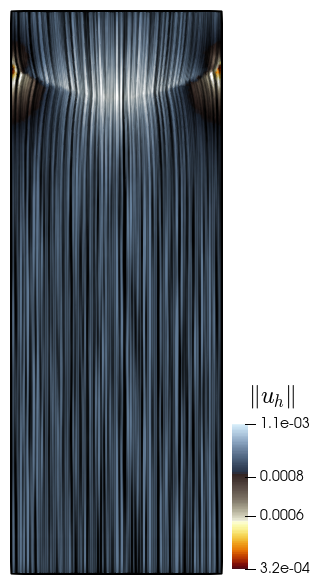}
    \includegraphics[width=0.245\linewidth]{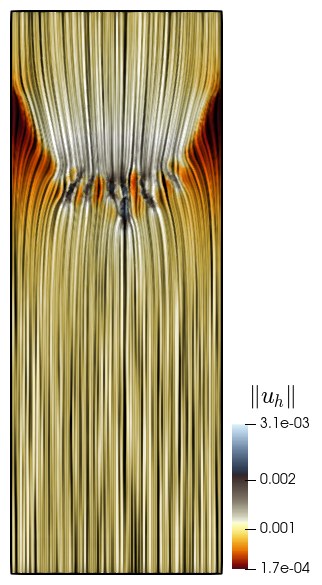}
   \includegraphics[width=0.245\linewidth]{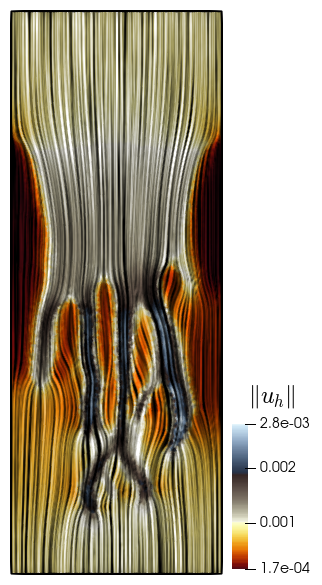}\\
   \includegraphics[width=0.245\linewidth]{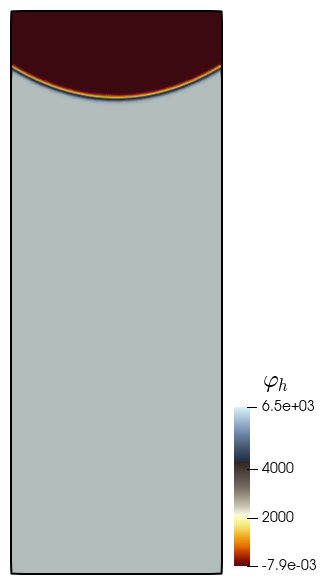}
    \includegraphics[width=0.245\linewidth]{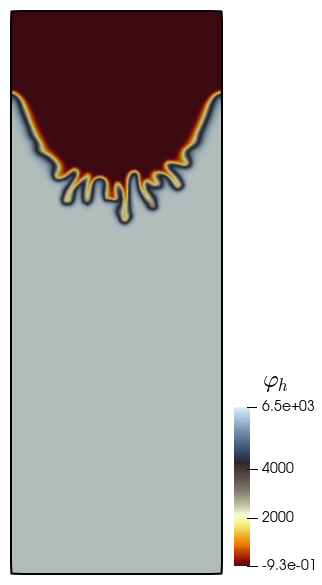}
   \includegraphics[width=0.245\linewidth]{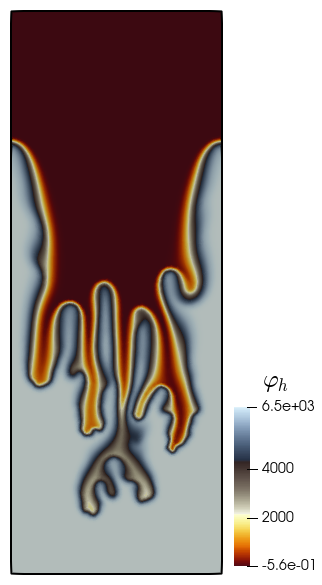}
    \caption{Example 5. Snapshots of the approximate discharge flux (top) and concentration profile (bottom) at times $t=0.01$ (left), $t=6$ (centre), and $t=18$ (right).}
    \label{fig:ex05}
\end{figure}

The viscosity of the fluid mixture follows an Arrhenius law 
$\mu(\varphi) = \mu_1 \exp(R\,\frac{\varphi}{\varphi_2})$, 
where \(\mu_1\) is the viscosity of the displacing fluid (water), \(\varphi_2 = 6500\) mol/m\(^3\) is a reference concentration, and \(R = \ln r\) is the log‑mobility ratio. 
The viscosity ratio is \(r = \mu(\varphi=1)/\mu_1 = 2\), hence \(R = \ln 2\). 
The hydraulic conductivity is then \(\kappa = K/\mu(\varphi)\), with \(K\) the intrinsic permeability; the  body force \(\bf(\varphi)\) is set to zero. The computational domain is the channel \(\Omega = (0,0.03)\times(0,0.08)\) m\(^2\). 
The initial concentration field $\varphi(x,y,0)$ is a step function from 0 (water) on top of a parabolic profile located near the top boundary $y=0.0675+25(x-0.015)^2$, to $\varphi_2$ (resident fluid) below it. 
As water is injected from the top  boundary (setting a prescribed normal discharge inflow velocity $\bu\cdot\bn = -9\cdot10^{-4}$\,m/s and fixed concentration $\varphi = 0$), the resident fluid is displaced and the viscous fingering instability develops spontaneously. 
No artificial random perturbation is added; the unstructured mesh provides sufficient asymmetry to trigger the instability near the initial interface. On the vertical walls we impose \(\bu\cdot\bn = 0\) and \(\bzeta\cdot\bn = 0\). On the outlet (bottom segment) we set $p=0$ and zero diffusive flux (not total flux), this means that we need to include the remainder terms $-\varepsilon \langle \varphi, \bw\cdot \bn \rangle_{\mathrm{out}}$ and $\langle (\bzeta+\varphi\bu)\cdot \bn, \bw\cdot\bn\rangle_{\mathrm{out}}$ in equation \eqref{eq:weak,3}.  
    
The model parameters (diffusivity \(\alpha = 4\cdot10^{-8}\)\,m$^2/$s, porosity \(\varrho_0=0.5\), permeability \(K=10^{-6}\)\,m$^2$, reference concentration \(\varphi_2=6500\)\,mol/m$^3$,  the constant mobility ratio  \(R=2\), and water viscosity $\mu_1=1$\,mPa$\cdot$s) are taken from \cite{bear2013dynamics,gatica2022p}; the source term \(m\) in \eqref{eq:strong} is zero. 
The domain is discretised with an unstructured mesh of 37745 triangles. 
The time derivative is discretised using backward Euler's method with constant time step \(\Delta t = 0.01\)\,s, and the simulation runs until the final time \(t = 18\)\,s. 
The numerical results (snapshots of the approximate solutions at four different times, showing in this case only velocity and concentration)  are displayed in Figure~\ref{fig:ex05}. They reproduce the characteristic viscous fingering patterns observed in Hele--Shaw displacements. For this test the maximum number of nonlinear iterations  to convergence was four (and required at the initial time steps).

    \bibliographystyle{siam}
    \small 
\bibliography{refs}


\appendix 
\section{Preconditioners for sub-systems}
\smallskip
\noindent\textbf{Robust preconditioners for the Darcy--Forchheimer system.} 
Let us consider the Darcy--Forchheimer system alone from \eqref{eq:strong}
\begin{equation}\label{eq:strong.Darcy.Forchh}
\begin{aligned}
    \kappa^{-1}\bu + {\rF|\bu|\bu} + \nabla p & = \bf \quad && \text{in }\Omega,\\
    \vdiv \bu & = 0 \quad && \text{in }\Omega,\\
    \bu \cdot \bn & = 0 \quad && \text{on }\Gamma_{\bu},\\
    p & = 0 \quad && \text{on }\Gamma_p.
\end{aligned}
\end{equation}
The following two \((\kappa, \rF, h)\)-robust preconditioners for \eqref{eq:strong.Darcy.Forchh} have been mentioned in \cite[Section~5]{dhpr_sinum26}, and the robustness of these preconditioners can be seen in Table~\ref{tab:DF.prec}.
\begin{align}\label{precond.linear}
\mathcal{B}_{\mathsf{DF}}^1:=\begin{pmatrix}
        (\kappa^{-1}\mathcal{I}+\rF 
        {\mathcal{H}({\bu})} {\mathcal{I}}-\nabla\vdiv)^{-1} & 0\\0 &\mathcal{I}^{-1}+ {\bigl(-\kappa\Delta\bigr)^{-1}-\bigl([\rF{\mathcal{H}({\bu})}]^{-1}{\Delta_{{\frac32}}}\bigr)^{-1}}
    \end{pmatrix},\end{align} 
    and \begin{align}\label{precond.linear.new}
    \mathcal{B}_{\mathsf{DF}}^2:=\begin{pmatrix}
        \bigl([\kappa^{-1}\mathcal{I}+\rF 
        {\mathcal{H}({\bu})}]\, ({\mathcal{I}}-\nabla\vdiv)\bigr)^{-1} & 0\\0 &([\kappa^{-1}\mathcal{I}+\rF 
        {\mathcal{H}({\bu})}]^{-1}\,\mathcal{I})^{-1}
    \end{pmatrix},\end{align}
    where \(\mathcal{H}(\bu)\) is the G\^ateaux derivative of the nonlinear operator \(|\bu|\bu\) in the state \(\bu\) in the direction of \(\delta\bu\), and ${\Delta_{{\frac32}}}$ is the $\frac32$-Laplace operator associated with the space {$\rW^{1,\frac32}_{\Gamma_p}(\Omega)$}, defined as  
  \[{\langle \Delta_{\frac32}\phi,\psi\rangle := \int_\Omega |\nabla\phi|^{-\frac12}\nabla \phi \cdot \nabla \psi, \qquad \forall \psi \in \rW^{1,3}(\Omega).} \] 
  
  \begin{table}[t!]
\centering
{\footnotesize \begin{minipage}{0.48\textwidth}
\centering
\begin{tabular}{|c | c | l l l|}
\hline
$\kappa$ & $\rF$ & \multicolumn{3}{c|}{$h$} \\
\cline{3-5}
 & & $2^{-2}$ & $2^{-3}$ & $2^{-4}$ \\
\hline

\multirow{3}{*}{$10^{-10}$}
& $10^{-10}$ & 3.7 (23) & 4.1 (31) & 4.4 (33) \\
& 1 & 5.7 (23) & 4.1 (31) & 4.4 (33) \\
& $10^{5}$ & 3.7 (23) & 4.1 (31) & 4.4 (33) \\

\hline

\multirow{3}{*}{$10^{-5}$}
& $10^{-10}$ & 3.7 (25) & 4.1 (31) & 4.2 (33) \\
& 1 & 3.7 (25) & 4.1 (31) & 4.2 (33) \\
& $10^{5}$ & 3.7 (27) & 4.1 (31) & 4.3 (33) \\

\hline

\multirow{3}{*}{$1$}
& $10^{-10}$ & 1.2 (11) & 1.2 (11) & 1.2 (11) \\
& 1 & 1.2 (11) & 1.2 (11) & 1.2 (11) \\
& $10^{5}$ & 4.4 (27) & 4.6 (33) & 4.5 (38) \\

\hline
\end{tabular}
\end{minipage}
\hfill
\begin{minipage}{0.48\textwidth}
\centering
\begin{tabular}{|c | c | l l l|}
\hline
$\kappa$ & $\rF$ & \multicolumn{3}{c|}{$h$} \\
\cline{3-5}
 & & $2^{-2}$ & $2^{-3}$ & $2^{-4}$ \\
\hline

\multirow{3}{*}{$10^{-10}$}
& $10^{-10}$ & 1.2 (1) & 1.2 (1) & 1.2 (1) \\
& $1$        & 1.2 (1) & 1.2 (1) & 1.2 (1) \\
& $10^{5}$   & 1.2 (1) & 1.2 (1) & 1.2 (1) \\

\hline

\multirow{3}{*}{$10^{-5}$}
& $10^{-10}$ & 1.2 (6) & 1.2 (6) & 1.2 (6) \\
& $1$        & 1.2 (6) & 1.2 (6) & 1.2 (6) \\
& $10^{5}$   & 1.2 (8) & 1.2 (8) & 1.2 (8) \\

\hline

\multirow{3}{*}{$1$}
& $10^{-10}$ & 1.2 (6) & 1.2 (6) & 1.2 (6) \\
& $1$        & 1.2 (8) & 1.2 (8) & 1.2 (8) \\
& $10^{5}$   & 1.4 (13) & 1.4 (13) & 1.3 (12) \\

\hline
\end{tabular}
\end{minipage}
}

\caption{Condition numbers of the preconditioned systems (\eqref{precond.linear} on left, and \eqref{precond.linear.new} on right) with MINRES iterations in parentheses for different values of $\kappa, \rF,$ and $h$.}
\label{tab:DF.prec}
\end{table}

\smallskip
\noindent\textbf{A robust preconditioner for the reaction-diffusion system.}
Consider the mixed reaction-diffusion system 
\begin{equation}\label{eq:strong.reac.diff}
\begin{aligned}
    \alpha^{-1}\bzeta - \nabla \varphi & = \bzero \quad && \text{in }\Omega,\\
    \vdiv \bzeta - \varrho_0 \varphi & = -m \quad && \text{in }\Omega,\\
    \varphi & = 0 \quad && \text{on }\Gamma_{\bu},\\
    \bzeta \cdot \bn & = 0 \quad && \text{on }\Gamma_p.
\end{aligned}
\end{equation}
The natural choice of a \((\alpha,\varphi,h)\)-robust preconditioner for \eqref{eq:strong.reac.diff} (following \cite[Equation~3.8]{boon2021robust}) is the following 
  \begin{equation}\label{abstract-prec-reac.diff} 
\mathcal{B}_{{\mathrm{RD}}}:= \begin{pmatrix} (\frac{1}{\alpha}\cI - \nabla(\vdiv) )^{-1} & 0 \\ 0 & ([1+\varrho_0]\,\mathcal{I})^{-1}+(\varrho_0\mathcal{I}- {\alpha}\Delta )^{-1}
\end{pmatrix}.
\end{equation}
The condition numbers and the MINRES iteration counts are reported in Table~\ref{tab:reac.diff}.

\begin{table}[t!]
\centering
{\footnotesize \begin{tabular}{|c | c | l l l|}
\hline
$\alpha$ & $\varrho_0$ & \multicolumn{3}{c|}{$h$} \\
\cline{3-5}
 & & $2^{-2}$ & $2^{-3}$ & $2^{-4}$ \\
\hline

\multirow{4}{*}{$10^{-9}$}
& $10^{-9}$ & 3.7 (17) & 4.1 (19) & 4.4 (21) \\
& $10^{-6}$ & 1.2 (10) & 1.7 (13) & 2.5 (18) \\
& $10^{-3}$ & 1.0 (4)  & 1.0 (4)  & 1.0 (4)  \\
& $1$       & 1.5 (3)  & 1.5 (3)  & 1.5 (3)  \\

\hline

\multirow{4}{*}{$10^{-3}$}
& $10^{-9}$ & 3.1 (24) & 3.3 (27) & 3.4 (29) \\
& $10^{-6}$ & 3.1 (24) & 3.3 (27) & 3.4 (29) \\
& $10^{-3}$ & 3.0 (23) & 3.0 (25) & 3.1 (29) \\
& $1$       & 1.9 (12) & 2.3 (15) & 2.6 (19) \\

\hline

\multirow{4}{*}{$1$}
& $10^{-9}$ & 1.2 (11) & 1.2 (11) & 1.2 (11) \\
& $10^{-6}$ & 1.2 (11) & 1.2 (11) & 1.2 (11) \\
& $10^{-3}$ & 1.2 (11) & 1.2 (11) & 1.2 (11) \\
& $1$       & 2.6 (14) & 2.6 (14) & 2.6 (14) \\

\hline
\end{tabular}}
\caption{Condition numbers of the preconditioned system (corresponding to \ref{abstract-prec-reac.diff}) with MINRES iterations in parentheses for different parameters and mesh sizes.}\label{tab:reac.diff}
\end{table}

\begin{table}[t!]\label{tab:advec.reac.diff}
\centering
{\footnotesize \begin{tabular}{|c|c|c|lll|}
\hline
$|\hat\bu|$ & $\varrho_0$ & $\alpha$ & \multicolumn{3}{c|}{$h$} \\
\cline{4-6}
& & & $2^{-2}$ & $2^{-3}$ & $2^{-4}$ \\
\hline
\multirow{12}{*}{$10^{-10}$}
& \multirow{4}{*}{$10^{-8}$}
& $10^{-9}$ & 3.3 (26) & 3.9 (31) & 4.2 (32) \\
& & $10^{-6}$ & 3.7 (19) & 4.2 (26) & 4.4 (28) \\
& & $10^{-3}$ & 3.1 (20)  & 3.3 (26)  & 3.4 (28)  \\
& & $1$       & 1.2 (11)  & 1.2 (11)  & 1.2 (11)  \\

\cline{2-6}

& \multirow{4}{*}{$10^{-4}$}
& $10^{-9}$ & 1.0 (5) & 1.0 (6) & 1.0 (7) \\
& & $10^{-6}$ & 2.2 (20) & 3.0 (29) & 3.7 (33) \\
& & $10^{-3}$ & 3.1 (20)  & 3.2 (25)  & 3.3 (28)  \\
& & $1$       & 1.2 (11)  & 1.2 (11)  & 1.2 (11)  \\

\cline{2-6}

& \multirow{4}{*}{1}
& $10^{-9}$ & 1.5 (3) & 1.5 (3) & 1.5 (3) \\
& & $10^{-6}$ & 1.5 (3) & 1.5 (3) & 1.3 (5) \\
& & $10^{-3}$ & 1.9 (11)  & 2.6 (14)  & 2.6 (18)  \\
& & $1$       & 2.6 (13)  & 2.6 (13)  & 2.6 (13)  \\

\cline{1-6}

\multirow{12}{*}{$10^{-5}$}
& \multirow{4}{*}{$10^{-8}$}
& $10^{-9}$ & 1567.6 (63) & 2232.1 (248) & 3189.9 ($\star$) \\
& & $10^{-6}$ & 64.1 (42) & 55.2(74) & 6.0 (61) \\
& & $10^{-3}$ & 3.1 (20) & 3.3 (26) & 3.3 (29) \\
& & $1$       & 1.2 (11) & 1.2 (11) & 1.2 (11) \\

\cline{2-6}

& \multirow{4}{*}{$10^{-4}$}
& $10^{-9}$ & 3.9 (32) & 64.6 (128)& 105.6 (521) \\
& & $10^{-6}$ & 3.3 (31) & 4.8 (48) & 1.5 (5) \\
& & $10^{-3}$ & 3.1(20) & 3.2 (26) & 3.3 (29) \\ 
& & $1$       & 1.2 (11) & 1.2 (11) & 1.2 (11) \\
\cline{2-6}

& \multirow{4}{*}{1}
& $10^{-9}$ & 1.5 (3) & 1.5 (4) & 1.5 (5) \\
& & $10^{-6}$ & 1.5 (3) & 1.5 (5) & 1.5 (5) \\
& & $10^{-3}$ & 1.9 (11)  & 2.3 (14)  & 2.6 (18)  \\
& & $1$       & 2.6 (13)  & 2.6 (13)  & 2.6 (13)  \\

\cline{1-6}

\multirow{12}{*}{$10^{-1}$}
& \multirow{4}{*}{$10^{-8}$}
& $10^{-9}$ & 1.4e+07 (61) & 1.1e+07 (243) & 8.6e+06 (991) \\
& & $10^{-6}$ & 22040.3 (64) & 17537.9 (250) & 12259.2 (998) \\
& & $10^{-3}$ & 21.1 (65) & 190.7 (230) & 356.6 (377) \\
& & $1$       & 1.2 (11) & 1.2 (11) & 1.2 (11) \\

\cline{2-6}

& \multirow{4}{*}{$10^{-4}$}
& $10^{-9}$ & 24970.7 (62) & 30598.7 (245) & 44066.2 (988) \\
& & $10^{-6}$ &3711.7 (64) & 4484.1 (248) & 3435.6 (996) \\
& & $10^{-3}$ & 21.0 (65) & 186.5 (230) & 345.8 (377) \\
& & $1$       & 1.2 (11) & 1.2 (11) & 1.2 (11) \\

\cline{2-6}

& \multirow{4}{*}{1}
& $10^{-9}$ & 1.9 (43) & 7.4 (143) & 9.8 (494) \\
& & $10^{-6}$ & 1.9 (43) & 7.5 (143) & 9.4 (491) \\
& & $10^{-3}$ & 1.8 (39)  & 6.0 (84)  & 18.2 (126)  \\
& & $1$       & 2.6 (14)  & 2.5 (14)  & 2.5 (13)  \\

\hline
\end{tabular}}
\caption{Condition numbers and MINRES iteration counts in parenthesis for different values of \(\alpha\) and mesh sizes $h$ corresponding to the preconditioner~ \eqref{Prec:B2.advec.diff}. The symbol $\star$ indicates that the MINRES iteration count exceeds 1000.}\label{tab:advec.reac.diff}
\end{table}

\smallskip
\noindent\textbf{Robust preconditioners for the advection-diffusion-reaction system.}
Consider now the ADR system 
\begin{equation}\label{eq:strong.advection.diff}
\begin{aligned}
    \alpha^{-1}\bzeta - \nabla \varphi + \alpha^{-1}\hat\bu\,\varphi & = \bzero \quad && \text{in }\Omega,\\
    \vdiv \bzeta -\varrho_0\varphi& = -m \quad && \text{in }\Omega,\\
    \varphi & = 0 \quad && \text{on }\Gamma_{\bu},\\
    \bzeta \cdot \bn & = 0 \quad && \text{on }\Gamma_p.
\end{aligned}
\end{equation}
The preconditioners~\eqref{precond.linear.B2} for the full system already include the blocks needed for the sub-system~\eqref{eq:strong.advection.diff}.  Here, we vary the magnitude of the Darcy--Forchheimer velocity~$\hat{\bu}$ to illustrate the deterioration in performance of these preconditioners in the advection-dominated regime. Specifically, we restrict our attention to $\mathrm{diag}({\cR}_2,{\cS}_2)$ from \eqref{precond.linear.B2}, noting that the remaining preconditioners exhibit similar behaviour: 
\begin{equation}\label{Prec:B2.advec.diff}
    \mathcal{B}^{1,2}_{\mathsf{DFARD}}:=\begin{pmatrix}
        (\alpha^{-1}\cI+\nabla\vdiv)^{-1}&0\\0&{(1+\varrho_0)\,\cI^{-1}+(\varrho_0\mathcal{I}-\alpha\Delta + \hat{\bu}\cdot \nabla)^{-1}}
    \end{pmatrix}.
\end{equation}
{A conclusion from Table~\ref{tab:advec.reac.diff} is that whenever $\frac{|\hat{\mathbf{u}}|h}{\alpha} \ll 1$, both the condition numbers of the preconditioned system and the MINRES iteration counts remain well controlled. Outside this regime, however, the performance of the preconditioner deteriorates and becomes inconsistent.
Possible remedies include a better stabilisation for the advection term~$\bu^m \varphi$ (pure upwind in general will not yield robust preconditioners for extreme regimes).
It is well established \cite{ernst2000residual} that classical discretisations of convection-dominated systems give rise to highly nonnormal discrete operators. Such nonnormality is not adequately reflected by the eigenvalues of the preconditioned systems alone; instead, their behavior deteriorates markedly  by increasing  P\'eclet number and mesh refinement. As a consequence, Krylov subspace methods will struggle for this type of problems.} 

Another difficulty already mentioned in Section \ref{sec:precond} is the nonsymmetric nature of the underlying discrete operators. Operator preconditioning and the usual  Schur complement structures are not yet  systematically developed. Here we have adopted a structure motivated by symmetric elliptic operators, which do not seem to provide definitive or robust conclusions.
See \cite{wathen2015preconditioning} for a detailed review of the challenges associated with preconditioning nonsymmetric systems.

\end{document}